\documentclass[12pt]{amsart}

\usepackage{amsmath}
\usepackage{amssymb, amstext, amsthm, amsfonts,euscript,amscd,mathrsfs}

\usepackage{graphicx}
\usepackage{cases}
\usepackage{enumitem}
\usepackage[final]{pdfpages}
\usepackage[toc,page]{appendix}
\usepackage{latexsym}
\usepackage[disable]{todonotes} 

\usepackage[normalem]{ulem}
\usepackage{array}
\usepackage{booktabs}

\DeclareMathOperator{\Real}{\mathbb{R}}

\newcommand{\N}{\mathbb{N}}

\newcommand{\jap}[1]{\!\left<#1\right>}
  \newcommand{\jxi}{\jap{\xi}}

\newcommand{\dx}{\partial_x}
\newcommand{\dxp}[1]{\partial_{x_{#1}}}

\newcommand{\dy}{\partial_y}
\newcommand{\dyp}[1]{\partial_{y_{#1}}}

\providecommand{\norm}[1]{\lVert#1\rVert}

\newcommand{\pd}{\partial}
\newcommand{\laplace}{\triangle}

\DeclareMathOperator{\dom}{dom}

\newcommand{\dt}{\pd_t}
 
\newcommand{\tld}[1]{\tilde{#1}}

\newcommand{\R}{\Real}

\newcommand{\eps}{\varepsilon}

\newcommand{\les}{\lesssim}

\newcommand{\dz}{\partial_{z}}

  \newcommand{\vo}{\vec{0}}

\newcommand{\D}{\mathscr{D}}

\newcommand{\Hormander}{{H\"{o}rmander}}

  \DeclareMathOperator{\supp}{supp}
   
\newcounter{theorem_counter}
\newtheorem*{thm*}{Theorem}
\newtheorem{thm}{Theorem}
\newtheorem{lem}[thm]{Lemma}
\newtheorem{prop}[thm]{Proposition}
\newtheorem{rem}[thm]{Remark}
\newtheorem{coro}[thm]{Corollary}

\newtheorem{defn}[thm]{Definition}

\usepackage{tabularx} 
\usepackage{booktabs} 

\usepackage[colorlinks=false]{hyperref}

\title{On the Necessity of Logarithmic Estimates for Hypoellipticity}
\author{Timur Akhunov}\address{Department of Mathematics and Computer Science\\
Wabash College\\
301 W Wabash Ave, Crawfordsville, IN 47933, USA}
\author{Lyudmila Korobenko}
\address{Mathematics Department\\
Reed College\\
3203 Southeast Woodstock Boulevard\\
Portland, OR 97202-8199, USA}
\keywords{Local hypoellipticity,
degenerate ellipticity, degenerate parabolicity,
Laplacian, maximum principle,
infinite degeneracy,
Logarithmic criterion, gain of derivatives,
spectral theory, Sobolev embedding}

\subjclass{35H10, 35H20, 35S05, 35G05, 35B65, 35A18}
\begin{document}
\begin{abstract}
  This paper is focused on necessary conditions for hypoellipticity of an operator $L$ of the form $L=L_1(x)+g(x)L_2(y)$, where the operator $L_1$ is either elliptic or parabolic, $L_2$ is degenerately elliptic and $g(x)$ may itself vanish adding further degeneracy. First, we establish a logarithmic criterion: if the operator $L$ above is hypoelliptic and $L_1$ has a family of spectral solutions defined below, then the remaining part $L_2$ must gain a power of a logarithm of a derivative. Such a property can be thought of as a restriction on degeneracy of the operator $L_2$. We then use this criterion to examine degenerate elliptic and parabolic operators closing gaps between sufficiency and necessity that have been open since 1980s in three and higher dimensions.
\end{abstract}

%


\maketitle



\section{Introduction}
In this paper we investigate conditions in arbitrary dimensions under which local hypoellipticity of $L$, or smoothness of solutions of $Lu=f$ whenever $f$ is locally smooth (see Definition \ref{def-hypo} below). For a degenerate elliptic or parabolic operator $L$ the \Hormander\, bracket condition is a famous sufficient condition for hypoellipticity \cite{Hor67Brac,Ole-Rad73}. However, a sharp necessary and sufficient condition for hypoellipticity with smooth coefficients and infinite rates of degeneracy remains open.\footnote{ \cite{Bony69,MorXu-07,Amano-control} showed hypoellipticity implies controllability (or completeness of Carnot-Caratheodory metric) under fairly generic conditions. Controllability implies maximum principle \cite{MorXu-07,Amano-max}. Controllability is equivalent to \Hormander\, bracket condition for real analytic coefficients of $L$. However, with $C^\infty$ coefficients controllability is not sufficient for hypoellipticity \cite{Kusuoka-Strook85,Mor87}. Our work can be framed as quantifying the minimal rate of controllability.} While a vast literature emerged to address sufficiency criteria for hypoellipticity beyond the scope of {\Hormander} theory starting with a pioneering work of \cite{Fedii71}, fewer techniques have been developed to address optimality of such criteria particularly beyond two and three dimensions. For operators of the form $L(x,y)=L_1(x)+g(x)L_2(y)$ we develop a necessity criterion linked to a spectral condition for the operator $L_1$. The spectral estimates we work with have a certain resemblance with quantitative unique continuation \cite[Theorem 2.1]{Ign-Kuk12}, nodal sets of Laplacian \cite{Logunov2018}, quantitative  controllability and eigenfunction tunneling \cite{Lau-Lea2020}. As we discuss in Appendix \ref{sec:opt} applications of our criterion are optimal in a variety of settings.

Crucial to our work is Morimoto's superlogarithmic estimate
 \begin{defn}[Superlogarithmic estimate]\label{def-superlog}
   We say that an operator $L$ satisfies a superlogarithmic estimate of order $p>0$ near $y_0\in \R^m$ if there exists a compact set $K\Subset \R^m$ containing $y_0$, so that for every $\eps'>0$, there exists a constant $C_{\eps',K}$, such that the following estimate holds.
\begin{equation}\label{superlog}
||(\log\jap{\xi})^p\hat u(\xi)||^2 \leq \varepsilon'\,\mathrm{Re}(L u,u) + C_{\varepsilon',K} ||u||^2,\quad u\in C_{0}^{\infty}(K),
\end{equation}
where
     \begin{align}\label{jap-bracket}
   \jap{\xi}:=\sqrt{e^2+|\xi|^2}
 \end{align}
 \end{defn}

Morimoto showed in \cite{Mor87} that this estimate for $p=1$ is sufficient for the hypoellipticity of a symmetric second order operator $L$ in any dimensions by adapting probabilistic techniques of \cite{Kusuoka-Strook85} into PDE setting.\\

While superlogarithmic estimates are sufficient for hypoellipticity of many operators\footnote{
 In a related context, \cite{kohn02} established \eqref{superlog} and proved hypoellipticity of operators on CR manifolds. \cite{Mor-Xu03}, \cite{MorXu-07} investigated nonlinear solvability and hypoellipticity for operators satisfying \eqref{superlog}.}, they are not always necessary. To motivate cases when such an estimate is necessary and those when it is not, consider the simplest and extensively studied Fedii operator \cite{Fedii71}
 \begin{align}\label{Fedii2D}
 \begin{split}
  & L_{F}:=-\dy^2 - a(y)\partial_z^2, 
 \end{split}
\end{align}
and Kusuoka-Strook \cite{Kusuoka-Strook85} operator $L_{KS}=-\dx^2+L_F$. As we elaborate in more detail in section \ref{ref:Fed} based on  \cite{Mor-Xu03,Morimoto-Morioka97,Morimoto-Morioka97-Fedii}, in the special case of $a(y)=e^{-|y|^{-\alpha}}$, $L_F$ and $L_{KS}$ satisfy \eqref{superlog} if and only if $\alpha<\frac{1}{p}$ and violate {\Hormander}'s bracket condition for $\alpha>0$. Equivalently, the weight $a(y)=e^{-\frac{1}{|y|}}$ in \eqref{Fedii2D} leads to \eqref{superlog} for $p<1$ for $L_F$ and $L_{KS}$, but not $p=1$. Despite similar coefficients, hypoellipticity for $L_F$ and $L_{KS}$ for very degenerate $a(y)$ is quite different, as this table illustrates:
\renewcommand{\arraystretch}{1.4}
\begin{table}[!ht]
\centering
\caption{Simplest known examples that our work extends}
\label{table}
\renewcommand{\arraystretch}{1.4}
\begin{tabularx}{0.95\textwidth}{| l | X | X |}
\hline
\textbf{Operator} & \textbf{Minimal $p>0$ in \eqref{superlog} necessary for hypoellipticity } & \textbf{References that include and extend such examples} \\ \hline
$L_{F}$ & No restriction & \cite{Fedii71,Mor78,Morimoto-Morioka97-Fedii,Hosh-88,Kohn98,Christ01,AkhKorRios}\\
\hline
$-\dx^2+L_F$  & $p=1$ & \cite{Kusuoka-Strook85,Mor87,Hos87,Bell-Mohammed,Christ01,AkhKor24} \\ \hline
$\dt + L_F$ & $p=\tfrac{1}{2}$ & \cite{Hoshiro-89,Morimoto-Morioka97-Fedii}\\
\hline
$\dt - \dx^2+ L_F$& $p=1$ & \cite{Kusuoka-Strook85,Mor87,Hos87}\\
\hline
\end{tabularx}
\end{table}

All operators in the table as well as most of the known necessity results can be reduced to the form $L=\dt +L_2(x',D_{x'})$ or $\dx^2 + L_2(x',D_{x'})$, where $x$, $t$ and $x'$ are unrelated variables. We call it a clean split into $1+m$ dimensions. Most other necessity results are focused on specific examples in $3$ dimensions. For example \cite{Morimoto-Morioka97,Christ01} and references therein examine sharp sufficiency conditions for operators of the form $\dx^2+a(x)\dy^2 + b(x)\dz^2$. There has not been a systematic study of necessary conditions for sums of infinitely degenerate operators of the form $L_1(x,D_{x})+g(x)L_2(y,D_y)$ in arbitrary dimensions.\\

More specifically, we consider a degenerate parabolic/elliptic operator 
\begin{align}\label{Lgeneral}
  L(x,y)=L_1(x)+g(x)L_2(y)
\end{align}
where $g(x)\ge 0$
\begin{align}\label{L1}
  L_1u=-\sum_{j,k=1}^n a_{jk}(x)\dxp{j}\dxp{k}u +\sum_{j=1}^n a_j(x) \dxp{j}u + a_0(x)u
\end{align}
with $x\in \R^n$, $a_{jk}$, $a_j$ smooth real valued coefficients with $(a_{jk})$ a non-negative self-adjoint matrix, i.e.
\begin{align}\label{elliptic}
  \sum_{j,k=1}^n a_{j,k}(x) \eta_j \eta_k \ge a_e(x)|\eta|^2\ge 0
\end{align}
Note that condition \eqref{elliptic} allows for vanishing second derivatives and includes parabolic operators as well as pointwise degeneracy in some or all of the directions.
 Operator $L_2$ is defined similarly, but we assume, in addition, that $L_2$ is a symmetric operator:
 \begin{align}
  \label{L2}
  L_2 u = -\sum_{j,k=1}^m \dyp{j}[b_{jk}(y)\dyp{k}u] + b_0(y)u
\end{align}
where $y\in \R^m$,  $b_{jk}$ and $b_0$ are smooth real functions and $b_{jk}$ is a non-negative matrix, i.e. \eqref{elliptic} holds
\begin{align}\label{elliptic2}
  \sum_{j,k=1}^m b_{j,k}(y) \eta_j \eta_k = b_e(y)|\eta|^2\ge 0
\end{align}
From \eqref{L2} we have
 \begin{align}\label{symmetry}
   (L_2 u,v) = \sum_{j,k=1}^m (b_{jk}(y)\dyp{k}u,\dyp{j}v) + (b_0(y)u,v)=(u, L_2 v) \text{, for }  u,v\in C^\infty_0(R^m_y)
 \end{align}
  which gives $L_2$ real valued spectrum. 

 In \cite{AkhKor24}, the authors showed that \eqref{superlog} with $p=1$ is necessary for hypoellipticity of $L$  in the case when $L_1$ from \eqref{L1} is an elliptic one-dimensional operator with a more general structure than \cite{Hos87}, \cite{Mor87} and \cite{Christ01}. 
 As mentioned above, proceeding from $1+m$ to $n+m$ has not been done in general. The main innovation of this paper is a new spectral criterion that in addition to allowing results in higher dimensions allows a uniform treatment for degenerate elliptic and parabolic problems.\\

 Let us sketch the idea of this criterion. Let us consider $Lu=0$ on a small region around the origin. We perform a spectral projection for $L_2$ (see Appendix \ref{appendix-spectral} for a brief summary) and interpet $L_2$ as spectral parameter $\lambda$. With this change $Lu=0$ from \eqref{Lgeneral} becomes a generalized eigenvalue problem on a small box $Q_r$ around the origin:
 \begin{align}
   L_1w(x,\lambda)+g(x)\lambda w(x,\lambda)=0 \label{L-spec}
 \end{align}
 Suppose that we are able to construct a family of uniformly bounded spectral profiles $w(x,\lambda)$ for all  high frequencies $\lambda\gg 1$. Suppose in addition, that there exists a parameter  $p>0$ such that 
\begin{align}\label{lower}
  \log|w(0,\lambda)| \approx -(\sqrt{\lambda})^\frac{1}{p}
\end{align}
or equivalently $|\log|w(0,\lambda)||^p\approx \sqrt{\lambda}$.
{\bf Our criterion states that in such a scenario hypoellipticity of $Lu=0$ must imply \eqref{superlog} for $L_2$ with the same $p$.} Equivalently, if \eqref{superlog} fails for $L_2$, then either \eqref{lower} must fail for $L_1$ or hypoellipticity of $L$ is false. The rate $p$ of superlogarithmic gain depends on the strongest rate of non-degeneracy in complementary $x$ directions, which is connected to the spectral estimate \eqref{lower}.  We can think of the superlogarithmic estimate a minimal cost of smoothness in $y$ directions given spectral properties in complementary directions.
\subsection{Organization of the paper.} The remainder of the paper is organized as follows. Section \ref{sec:main} formally defines the spectral profile and states the main logarithmic criterion -- Theorem \ref{thm:main-general}. Section \ref{sec:appl} details the applications of our criterion, establishing the necessary logarithmic estimates for degenerate operators $L_1+g(x)L_2$, where $L_1$ is uniformly elliptic or parabolic operator.  Sections \ref{sec:unif} and \ref{sec:p+} are devoted to constructing the required spectral profiles for the elliptic and parabolic cases, respectively. Finally, Sections \ref{sec:gen:low} and \ref{sec:high} contain the proof of the main criterion, divided into low- and high-frequency analysis. Appendix \ref{appendix-spectral} gives a brief exposition of spectral projections needed for the paper. In Appendix \ref{sec:opt} we discuss operators that satisfy superlogarithmic estimates and demonstrate elliptic and parabolic cases, where our results are optimal. Appendix \ref{app:Sob} proves a variant of a Sobolev embedding needed for a version of the Main criterion with lower regularity.





\section{Main criterion}\label{sec:main}
For completeness, we start with a definition of local smooth hypoellipticity.
       \begin{defn}\label{def-hypo}
  The operator $L(x,y)$ is {\bf locally $C^\infty$ hypoelliptic} near $(x_0,y_0)$ if for every neighborhood $\Omega$ of $(x_0,y_0)$ and a function $f(x,y)$ smooth in $\Omega$, there exists a neighborhood $\Omega'\Subset\Omega$, such that any distribution $u$ satisfying $Lu=f$ in the distributional sense in $\Omega$,  must be smooth in $\Omega'$, i.e. $u\in C^\infty(\Omega')$.\\
  For brevity, we often simply refer to $L$ as hypoelliptic near $(x_0,y_0)$.
\end{defn}
Note, in particular, the region of smoothness of a distributional solution may depend on the operator $L$, region $\Omega$ and the specific function $f$, but not on the particular distributional solution itself.\\

  By translation, we may assume without loss of generality, that $(x_0,y_0)=(0,0)$, which will be our focus.\\ 

As mentioned in the introduction we perform a spectral projection for $L_2$ (see Appendix \ref{appendix-spectral}) and interpret $L_2$ as a spectral parameter $\lambda$. With this change for each $\lambda$, the equation $L_1u+g(x)L_2u=0$ becomes a spectral problem \eqref{L-spec}. Moreover, we may assume without loss of generality that  $\lambda\ge 1$ by Lemma \ref{lem:lambda-one} in Appendix \ref{appendix-spectral}.\\
Let $r>0$ and consider a box
\begin{align}\label{Qr}
  Q_r:=(-r,r)^n
\end{align}

Let $p>0$. The introduction motivated reduction of hypoellipticity to a spectral problem \eqref{Lspec} with uniformly bounded solutions that also have precise lower bounds for our main criterion. We slightly modify this set up here for technical convenience. More precisely, for our criterion we look for a point normalized solution $v(0,\lambda)=1$ with the following upper bound:
\begin{align}\label{spectral-log}
  \log^p|v(x,\lambda)| \les (\sqrt{\lambda}) \text{, on }\overline{Q}_r
\end{align}
We formalize this property with constant dependence in the following definition.
\begin{defn}\label{BVSP}
We say that the function $v(x,\lambda)$ is a {\bf spectral profile of order $p>0$} of operator $L=L_1+g(x)L_2$ on box $Q_{r_0}=(-r_0,r_0)^n$ for some $r_0>0$ if
\begin{enumerate}
\item There exist structural constants $c_0 > 0$ (depending only on the coefficient norms of $L_1$) and $C_r > 0$ (depending on $L_1$ and $r \in (0, r_0]$) such that the following holds:
     \begin{align}
    |v(x,\lambda)| \le C_r e^{c_0r\lambda^{\frac{1}{2p}} } \text{ on }\bar Q_r\label{criterion:UB}.
 \end{align}
  \item $v(x,\lambda)$ is a continuous distributional solution of
  \begin{align}
     L_1 v(x,\lambda) +  g(x)\lambda v(x,\lambda) =0  ,\text{ for } x\in Q_r\label{Lspec}
  \end{align}
  \item $v$ is point normalized, i.e.
  \begin{align}
      v(0,\lambda)= 1  \label{criterion:LWB}
  \end{align}
\end{enumerate}
  \end{defn}
 By {\bf solution} above we mean a function $v\in C^0(Q_r\times [1,\infty))$, which satisfies \eqref{Lspec} distributionally for all $\lambda\ge 1$, i.e. for every $\phi\in C^\infty_0(Q_r)$
 \begin{align}\label{Lsp-d}
   \int v(x,\lambda) (L_1^*+g(x)\lambda)\phi(x) dx =0
 \end{align}
  We are ready to state our criterion.
\begin{thm}\label{thm:main-general}[Logarithmic criterion]
Suppose that $L=L_1+g(x)L_2$ is locally $C^\infty$ hypoelliptic near $(0,0)$ and has a spectral profile of order $p>0$. Then $L_2$ must satisfy the logarithmic estimate for the same $p>0$ near $y=0$. That is for any $\eps'>0$ and any compact set $K$ contained in small enough open neighborhood of $y=0$, there is a constant $C_{\varepsilon',K}>0$, such that
\begin{equation*}
||(\log\jap{\xi})^p\hat u(\xi)||^2 \leq \varepsilon'\,\mathrm{Re}(L_2 u,u) + C_{\varepsilon',K} ||u||^2, \text{ for all }u\in C_{0}^{\infty}(K).
\end{equation*}
 	\end{thm}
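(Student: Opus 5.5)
We argue by contradiction. Suppose the logarithmic estimate fails for $L_2$ near $y=0$. Then there are $\eps'>0$, a compact $K$ in an arbitrarily small neighborhood of $0$, and a sequence $u_k\in C_0^\infty(K)$ with $\|u_k\|=1$ such that
\begin{align*}
\|(\log\jap{\xi})^p\hat u_k\|^2\ge \eps'\,\mathrm{Re}(L_2u_k,u_k)+k.
\end{align*}
So $u_k$ carries a lot of $(\log\jap{\xi})^{2p}$ mass while its $L_2$-energy stays comparatively small. Our aim is to turn this into a distributional solution of $Lu=0$ near $(0,0)$ that is not smooth, which contradicts Definition \ref{def-hypo} with $f=0$. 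Smoothness of $u$ would in particular control $\|(\log\jap{D_y})^N u(0,\cdot)\|$ on a neighborhood, for every $N$.

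The construction goes through the spectral resolution of $L_2$ from Appendix \ref{appendix-spectral}. We take a self-adjoint realization of $L_2$ on a bounded domain containing $K$ (Dirichlet or Friedrichs), with spectral measure $E(\lambda)$. By Lemma \ref{lem:lambda-one} we may restrict to $\lambda\ge 1$. We then set
\begin{align*}
u(x,y)=\int_{1}^{\infty} v(x,\lambda)\,dE(\lambda)h(y)
\end{align*}
for a suitable $h=\sum_k c_k u_k$ (or a spectrally localized piece of it). Formally $L_2$ acts as $\lambda$, so \eqref{Lspec} gives $Lu=0$ on $Q_r\times(\text{domain})$. We make this rigorous by pairing against $\phi(x)\psi(y)$ and using \eqref{Lsp-d} together with the symmetry \eqref{symmetry}.

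For $u$ to be a distribution we need convergence in a weak norm, and here \eqref{criterion:UB} is essential. It gives $|v|\le C_r e^{c_0 r\lambda^{1/(2p)}}$, so we need $h$ to have spectral decay of the form $\int e^{2c_0 r\lambda^{1/(2p)}}\,d\|E(\lambda)h\|^2<\infty$. Choosing $r$ small makes this weight as mild as we like, but it remains superpolynomial. This is where the failure of the estimate must enter. Write $\lambda\approx(\log\jap{\xi})^{2p}$, so that $e^{c\lambda^{1/(2p)}}\approx\jap{\xi}^{c}$. The failure of \eqref{superlog} then says that $u_k$ has $\xi$-frequencies of size $\jap{\xi}\sim e^{\mu_k}$ while its spectral mass sits at $\lambda\lesssim\mu_k^{2p}/\eps'$. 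In that regime $e^{c_0r\lambda^{1/(2p)}}\le \jap{\xi}^{c_0 r\eps'^{-1/(2p)}}$, which is only a small power of $\jap{\xi}$ once $r$ is small. On the other side, the normalization \eqref{criterion:LWB} gives $u(0,y)=h(y)$ exactly. Hence the trace of $u$ at $x=0$ carries the full $y$-regularity (or irregularity) of $h$.

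The plan therefore reduces to the following steps.
\begin{enumerate}
\item Split frequencies into a low part and a high part. This is the low/high frequency split of Sections \ref{sec:gen:low} and \ref{sec:high}.
\item Low frequencies: show that the portion of $u_k$ with $\lambda\le A(\log\jap{\xi})^{2p}$ but $|\xi|$ large carries a definite fraction of the $(\log\jap{\xi})^{p}$-mass.
\item Build $h=\sum_k c_k\tilde u_k$ with translated or frequency-separated pieces and coefficients $c_k$ chosen so that $u\in L^2$ (or a negative Sobolev space) on $Q_r\times\R^m$, while $\|(\log\jap{D_y})^{N}h\|=\infty$ in every neighborhood of $0$.
\item Conclude that $u(0,\cdot)$ is not smooth near $0$, so $u$ is not smooth, contradicting hypoellipticity. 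Continuity of $v$ in $x$ makes the trace meaningful, and Appendix \ref{app:Sob}'s Sobolev embedding converts smoothness of $u$ into control of its trace.
\end{enumerate}

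The main obstacle is step 2. The spectral variable $\lambda$ of $L_2$ and the Fourier variable $\xi$ are not simultaneously diagonal. We must show that functions violating \eqref{superlog} have spectral measure concentrated at $\lambda\lesssim(\log\jap{\xi})^{2p}$ while keeping large $\xi$-frequencies, uniformly in $k$, and that this survives cutoffs localizing near $y=0$. I would first try to use the inequality itself to bound $\int_{\lambda>M}\lambda\,d\|E(\lambda)u_k\|^2$, which controls the tail of the spectral measure. I would then use a smooth spectral cutoff $\chi(L_2/M)$, with $M=M_k$ tuned to the growth of $\mu_k$. The localization errors would be estimated by commutator bounds with the $y$-cutoffs.
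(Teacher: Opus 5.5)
Your proposal has a genuine gap, and it sits in Step 2, which you yourself flag as the main obstacle. The heuristic of cutting at $\lambda\approx(\log\jxi)^{2p}$, so that $e^{c_0r\lambda^{1/(2p)}}$ is only a small power of $\jxi$, is the right one. But the mechanism you propose for making it rigorous does not work. With a fixed spectral cutoff $\chi(B/M)$, the failed inequality only tells you that $(1-\chi(B/M))u_k$ is small in $L^2$, of size $M^{-1}(Bu_k,u_k)$. It gives no bound on the $(\log\jxi)^{p}$-mass of that high piece. Showing that a spectrally high piece carries little logarithmic mass relative to its energy is precisely the superlogarithmic estimate you are trying to prove, so the argument is circular.

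Step 3 has an independent gap. Even if you had good pieces $\tilde u_k$, nothing prevents cancellation in $h=\sum_k c_k\tilde u_k$ in $H^s$ near $y=0$. The spectral cutoffs also destroy the support in $K$. So you have not actually produced a non-smooth solution, and doing this by hand amounts to re-proving a uniform boundedness principle.

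The paper avoids both problems by proving the estimate directly, in two steps.

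\textbf{Quantitative hypoellipticity.} The closed graph theorem on the Banach space $X$ of $L^2$ solutions of $Lw=0$ (Lemma~\ref{closed-graph}) gives an $H^s$ bound in terms of the $L^2$ norm. The paper applies it to $w_j=v(x,B)P_ju$, which is admissible because $P_ju$ has spectrum in $[e^{j-1},e^{j+1}]$. Combined with \eqref{criterion:UB}, \eqref{criterion:LWB} and the trace bound, this yields \eqref{Hs-tldL}: $\|P_ju\|_{H^1(Q')}\le\tilde C e^{\eps e^{j/(2p)}}\|P_ju\|$, uniformly in $j$.

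\textbf{A $\xi$-dependent split.} The non-commutation of $\lambda$ and $\xi$ is handled not by a fixed cutoff but by splitting $\hat u(\xi)=\sum_j\widehat{P_ju}(\xi)$ at the index $R(\xi)$ with $e^{R}=(s_2\log\jxi/(2\eps))^{2p}$, and applying Cauchy--Schwarz in $j$ pointwise in $\xi$ (Lemma~\ref{lem:interp}). For $j\le R(\xi)$ one has $\jxi^{s_2}\ge e^{2\eps e^{j/(2p)}}$, so this part is paid for by \eqref{Hs-tldL}. For $j\ge R(\xi)$ one has trivially $(\log\jxi)^{2p}\le(2\eps/s_2)^{2p}e^{j}$. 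Hence this part is bounded by $C\eps^{2p}\|\sqrt{B}u\|^2\le\eps'(L_2u,u)$, with no circularity.

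If you want to keep a contradiction format, these are exactly the two ingredients to insert. Use the $\xi$-dependent split to extract from $u_k$ a single projection $P_{j_k}u_k$ that violates \eqref{Hs-tldL} with constants tending to infinity. Then use the closed graph theorem in place of the explicit construction of $h$.
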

 We formulated this logarithmic criterion to be general enough to apply to both degenerate elliptic and parabolic problems. This criterion is novel beyond $1+m$ dimensions, where \eqref{L-spec} is an ODE. In fact, conditions \eqref{criterion:UB}, \eqref{Lspec} and \eqref{criterion:LWB} are intimately tied to hypoellipticity. Point normalization \eqref{criterion:LWB} comes up in the proof of Theorem \ref{thm:main-general} due to the nesting of regions $\Omega'\Subset \Omega$ in the hypoellipticity Definition \ref{def-hypo}. \eqref{Lspec} is a spectral version of our main operator \eqref{Lgeneral}. Finally, the highest power $p$ in the spectral profile of $L_1$ in \eqref{criterion:UB} is the same as the highest necessary power of superlogarithmic estimate \eqref{superlog} for $L_2$.\\

  Note, that no assumption is made in the Theorem \ref{thm:main-general} on the weight $g(x)$ explicitly beyond the hypoellipticity of the operator $L=L_1+g(x)L_2$. In particular, $g(x)$ may vanish on as large or small of a set in some or all of the variables as long as the hypoellipticity of $L$ allows it. The only restriction we make is that the weight $g(x)$ is independent of $L_2$ variables $y$. We discuss some of these limitations in Appendix \ref{sec:opt}, particularly in \ref{ref:parab}.\\


 We prove Theorem \ref{thm:main-general} in sections \ref{sec:gen:low} and \ref{sec:high}, but we first examine some of the applications of this criterion.
\section{Applications of the Criterion}\label{sec:appl}
 Our first application is the case when the operator $L_1$ is uniformly elliptic.
\begin{thm}\label{main:thm}
  Suppose that $L_1$ in \eqref{Lgeneral} is uniformly elliptic. Suppose further that $L=L_1+g(x)L_2$ is hypoelliptic near $(x_0,y_0)$. Then $L_2$ must satisfy superlogarithmic estimate \eqref{superlog} of order $p=1$ near $y_0$, i.e. for all $\eps'>0$ and compact set $K$, there exists a constant $C_{\eps',K}$, such that
  \begin{equation*}
||\log\jap{\xi}\hat u(\xi)||^2 \leq \varepsilon'\,\mathrm{Re}(L_2 u,u) + C_{\varepsilon',K} ||u||^2,\quad u\in C_{0}^{\infty}(K).
\end{equation*}
\end{thm}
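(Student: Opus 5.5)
By Theorem \ref{thm:main-general}, it suffices to construct a spectral profile of order $p=1$ for $L=L_1+g(x)L_2$ on some box $Q_{r_0}$ around $x_0$; after translation $x_0=0$. Concretely, for every $\lambda\ge 1$ I need a continuous solution $v(\cdot,\lambda)$ of $L_1v+\lambda g v=0$ in $Q_{r_0}$ with $v(0,\lambda)=1$ and $|v(x,\lambda)|\le C_r e^{c_0 r\sqrt\lambda}$ on $\bar Q_r$.

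\textbf{Construction.} I will build $v$ as a perturbation of an exponential. Set
\[
v(x,\lambda)=e^{\sqrt{\lambda}\,\mu\, x_1}\bigl(1+\rho(x,\lambda)\bigr)
\]
after an affine change of variables making $a_{jk}(0)=\delta_{jk}$. Conjugating gives
\[
e^{-\sqrt\lambda\mu x_1}(L_1+\lambda g)e^{\sqrt\lambda\mu x_1}=L_1-2\sqrt\lambda\mu\sum_j a_{1j}\partial_j+\bigl(-\lambda\mu^2a_{11}+\lambda g+O(\sqrt\lambda)\bigr).
\]
The constant term cannot be made to vanish identically for variable $a_{11},g$. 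So I instead plan an alternative that avoids phase construction: a Dirichlet problem on a small ball $B_{r_0}$. Let $w$ solve $(L_1+\lambda g)w=0$ in $B_{r_0}$ with $w=1$ on $\partial B_{r_0}$. To guarantee solvability and a maximum principle, I first absorb $a_0$ via a positive multiplier (e.g. $e^{Mx_1}$ for small $r_0$), which makes the zeroth-order term nonnegative. The maximum principle then gives $0<w\le 1$. Set
\[
v=\frac{w}{w(0,\lambda)},
\]
which is continuous (indeed smooth) and satisfies \eqref{criterion:LWB}. The upper bound \eqref{criterion:UB} becomes the lower bound
\[
w(0,\lambda)\ge C^{-1}e^{-c_0 r\sqrt\lambda},
\]
together with $|v|\le 1/w(0,\lambda)$ on $\bar Q_r$. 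To get the $r$-dependence correctly, I run the construction on the box $Q_{2r}$ itself (or a ball of radius comparable to $r$) for each $r\le r_0$. Definition \ref{BVSP} allows the constant $C_r$ to depend on $r$, and a single function $v$ is needed only for each fixed $r$ through \eqref{Lspec} on $Q_r$.

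\textbf{Main obstacle: the lower bound on $w(0,\lambda)$.} I will obtain it by comparison with an explicit subsolution. Take
\[
\psi(x)=\frac{\cosh(\sqrt{\lambda}\,k|x|)}{\cosh(\sqrt\lambda\, k R)},
\]
or more robustly a sum of exponentials $e^{\sqrt\lambda k(x_1-R)}+e^{-\sqrt\lambda k(x_1+R)}$ in one coordinate direction. Then $\psi\le 1$ on the boundary, and applying $L_1+\lambda g$ gives
\[
\lambda\bigl(-k^2a_{11}+g\bigr)\psi+O\bigl(\sqrt\lambda k\bigr)\psi\le 0
\]
once $k^2\ge 2\|g\|_\infty/\inf a_{11}$ and $\lambda$ is large, using uniform ellipticity. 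Hence $\psi$ is a subsolution, and the comparison principle gives $w\ge\psi$. Evaluating at the origin,
\[
w(0,\lambda)\ge e^{-\sqrt\lambda kR}\ge e^{-c_0 r\sqrt\lambda}
\]
with $R\sim r$. This yields \eqref{criterion:UB} with $p=1$, after undoing the multiplier, whose factor is bounded on $\bar Q_r$. Small $\lambda$ are handled by continuity and compactness.

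Continuity of $v$ in $\lambda$, as required by the notion of solution, follows from continuous dependence of the Dirichlet solution on $\lambda$. Theorem \ref{thm:main-general} then yields \eqref{superlog} with $p=1$.
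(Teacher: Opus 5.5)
Your overall route is the paper's: solve a Dirichlet problem for $L_1+\lambda g$ on a small cube or ball, use a generalized maximum principle for an $O(1)$ upper bound, compare with an exponential subsolution in $x_1$ to get $w(0,\lambda)\ge e^{-c r\sqrt{\lambda}}$, and normalize at the origin. The paper takes the subsolution $e^{c\sqrt{\lambda}(x_1-r)}$ itself as boundary data, so the boundary comparison is an equality; your boundary data $1$ works just as well.

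However, the step that is supposed to make the maximum principle available fails as written. Writing $w=e^{Mx_1}\tilde w$ gives the conjugated zeroth-order coefficient
\[
e^{-Mx_1}(L_1+\lambda g)e^{Mx_1}=-a_{11}M^2+a_1M+a_0+\lambda g .
\]
This becomes more negative as $M$ grows. Where $g$ vanishes it need not be nonnegative for any $M$; for $L_1=-\Delta-1$ it equals $-M^2-1$. Shrinking $r_0$ does not help, because this is a pointwise identity. No exponential multiplier works.

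What you need is a positive $h$ with $(L_1+\lambda g)h\ge 0$ on the small domain, for instance the paper's $h=3-e^{\beta(r-x_1)}$. There $-a_{11}\partial_1^2$ acting on $-e^{\beta(r-x_1)}$ produces $+a_{11}\beta^2e^{\beta(r-x_1)}$, which dominates the lower-order terms for $\beta$ large. The term $\lambda g h$ is harmless because $h>0$. Requiring $1\le h\le 2$ is exactly what forces $r\le \log 2/(2\beta)$; $\cos(\mu x_1)$ or $1-M|x|^2$ on a correspondingly small domain would also do. With such an $h$, Theorem \ref{GMP} applied to $w/h$ and to $(\psi-w)/h$ gives $w\le \sup h/\inf h$ (not $w\le 1$) and $w\ge\psi$. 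The rest of your argument then goes through.

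Two smaller points. First, your two-exponential $\psi$ equals $1+e^{-2\sqrt{\lambda}kR}>1$ at $x_1=\pm R$, so it is not $\le 1$ on the boundary. Use the $\cosh$ version on the ball, halve $\psi$, or simply take $e^{k\sqrt{\lambda}(x_1-R)}$, which is the paper's $u_L$. Second, choosing $k$ large in terms of $\inf a_{11}$, $\|a_1\|_\infty$, $\|a_0\|_\infty$, $\|g\|_\infty$ makes $\psi$ a subsolution for all $\lambda\ge 1$ at once. The continuity and compactness argument for small $\lambda$ is then unnecessary.
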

In section \ref{sec:unif} we construct a spectral profile of order $p=1$ in the sense of Definition \ref{BVSP} and reduce the proof to Theorem \ref{thm:main-general}. \\

To consider parabolic results we relabel the $n$-th dimension $t$ and assume $L_1$ has the following form
\begin{align*}
  L_1 =a_n(x,t)\dt-\sum_{j,k=1}^{n-1} a_{jk}(x,t)\dxp{j}\dxp{k} +\sum_{j=1}^{n-1} a_j(x,t) \dxp{j} + a_0(x,t)
\end{align*}
We focus on uniformly parabolic examples here, i.e. where in the remaining $n-1$ dimensions $a_{jk}$ matrix is uniformly elliptic and $a_n(0,0)\neq 0$. Replacing $t\to -t$ if needed, we may assume, without loss of generality, that $a_n(0,0)> 0$.  Provided that we work in a small neighborhood of $(0,0)$ we can ensure that $a_n(x,t)>0$. Dividing \eqref{Lgeneral} by $a_n(x,t)$ we may assume, without loss of generality that $a_n(x,t)\equiv 1$.\\


For $L_1$ that is uniformly parabolic in higher dimensions, the necessary logarithmic condition is identical to the uniformly elliptic case.
\begin{thm}\label{parab:n}
Let
\begin{align}\label{parab-eq}
  L_1 =\dt-\sum_{j,k=1}^{n-1} a_{jk}(x,t)\dxp{j}\dxp{k} +\sum_{j=1}^{n-1} a_j(x,t) \dxp{j} + a_0(x,t)
\end{align}
Suppose that $L=L_1+g(x)L_2$ is hypoelliptic near $(x_0,t_0,y_0)=\vo$ and $a_{jk}$ is uniformly elliptic, i.e. $a_{jk}(\vo)\ge c>0$ in the sense of positive definite matrices. Then $L_2$ must satisfy superlogarithmic estimate of order $p=1$, i.e. for all $\eps'>0$ and compact set $K$, there exists a constant $C_{\eps',K}$, such that
  \begin{equation*}
||\log\jap{\xi}\hat u(\xi)||^2 \leq \varepsilon'\,\mathrm{Re}(L_2 u,u) + C_{\varepsilon',K} ||u||^2,\quad u\in C_{0}^{\infty}(K).
\end{equation*}
\end{thm}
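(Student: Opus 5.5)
We reduce Theorem \ref{parab:n} to Theorem \ref{thm:main-general} with $p=1$. Since hypoellipticity is assumed, it suffices to build a spectral profile of order $p=1$ on some box $Q_{r_0}$ around $(x,t)=(0,0)$ in the sense of Definition \ref{BVSP}. That is, we need a continuous solution $v(x,t,\lambda)$ of
\begin{align*}
\dt v-\sum_{j,k=1}^{n-1} a_{jk}\dxp{j}\dxp{k}v+\sum_{j=1}^{n-1} a_j\dxp{j}v+a_0 v+g\lambda v=0,
\end{align*}
satisfying $v(0,0,\lambda)=1$ and $|v|\le C_r e^{c_0 r\sqrt{\lambda}}$ on $\bar Q_r$.

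The key observation is that the parabolic structure lets us shift the large parameter into the time direction. In the model case $a_{jk}=\delta_{jk}$, lower-order terms zero, and $g$ constant, the separated solution
\begin{align*}
e^{\mu t + \omega\cdot x}, \qquad \mu = |\omega|^2 - g\lambda,
\end{align*}
solves the equation. Choosing $|\omega|^2\approx g\lambda$ makes $\mu=0$, so the solution grows only like $e^{|\omega| r}\approx e^{c\sqrt{\lambda} r}$ on $Q_r$. This is exactly the order-$1$ bound \eqref{criterion:UB}. In general we take the following ansatz, with $\omega=\sqrt{\lambda}\,e_1$ (times a large constant $M$ fixed by $\sup g$):
\begin{align*}
v=e^{\sqrt{\lambda}\,\phi(x,t)}\,w(x,t,\lambda), \qquad \phi(x,t)=M x_1 .
\end{align*}
Substituting, the conjugated operator has principal large term $\lambda\bigl(g - M^2 a_{11}(x,t)\bigr)w$, which is strictly negative near the origin by uniform ellipticity once $M$ is large. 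The remaining terms are $\sqrt{\lambda}$ times first-order terms in $w$, plus the original operator applied to $w$.

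We then solve for $w$ as a bounded solution of a parabolic equation with a large negative zeroth-order potential. An alternative is to avoid exact ansatz corrections and use the maximum principle directly, as follows.
\begin{itemize}
\item[(a)] Solve a Cauchy–Dirichlet problem on $Q_{r_0}$ for $L_1+g\lambda$ with boundary and initial data equal to $e^{\sqrt{\lambda}Mx_1}$ on the parabolic boundary.
\item[(b)] The upper bound $|v|\le e^{c_0 r\sqrt{\lambda}}$ on $\bar Q_r$ requires care, since a global maximum principle only gives $e^{c_0 r_0\sqrt\lambda}$. To localize, compare $v$ with barriers of the form $e^{K\sqrt{\lambda}(|x|+\,\cdot\,)}$ adapted to the parabolic scaling, using interior estimates, or simply take the explicit supersolution $e^{\sqrt\lambda M'(x_1+\dots)}$.
\end{itemize}

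Point normalization then comes from dividing by $v(0,0,\lambda)$. This requires a lower bound $v(0,0,\lambda)\ge e^{-c\sqrt{\lambda}\,r}$-type, at least with exponent linear in $\sqrt\lambda$. We obtain it from a subsolution comparison: a function $e^{\sqrt\lambda \psi}$ with $\psi$ chosen so that $(L_1+g\lambda)e^{\sqrt\lambda\psi}\le 0$, which holds if $a_{11}|\nabla\psi|^2$ dominates $g+\sqrt\lambda^{-1}(\cdots)$. Such a $\psi$ is linear plus a small quadratic correction. Because the lower and upper bounds have the same linear rate in $\sqrt\lambda$ and the region is $Q_r$ with $r$ small, the quotient still satisfies \eqref{criterion:UB} with a possibly larger $c_0$. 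Continuity in $\lambda$ follows from continuous dependence of the parabolic problem on parameters.

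The main obstacle is step (b) together with the normalization. We must ensure the constant in \eqref{criterion:UB} scales like $c_0 r$ for every $r\le r_0$, with $c_0$ independent of $r$, rather than like $c_0 r_0$. I would try first to make both the sub- and supersolution explicit exponentials of $\sqrt\lambda$ times functions vanishing at the origin and Lipschitz with constant independent of $r$. This gives $|v/v(0)|\le e^{2c\sqrt\lambda\,\mathrm{Lip}\cdot r}$ on $\bar Q_r$ directly by comparison, and then Theorem \ref{thm:main-general} applies with $p=1$.
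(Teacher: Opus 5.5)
Your reduction to Theorem \ref{thm:main-general} with $p=1$ matches the paper, and so does your lower barrier $e^{\sqrt\lambda Mx_1}$, which is a subsolution once $M$ is large relative to $\sup g$ and the lower-order coefficients. The gap is step (b). Because you pose the Cauchy--Dirichlet problem on a fixed cylinder $Q_{r_0}$, you must then localize the upper bound to $\bar Q_r$, and the barriers you propose cannot do that. For $\bar u=e^{\sqrt\lambda\Psi}$ one has
\begin{align*}
(L_1+g\lambda)\bar u=\Bigl[\lambda\bigl(g-a_{jk}\partial_j\Psi\,\partial_k\Psi\bigr)+\sqrt\lambda\bigl(\dt\Psi-a_{jk}\partial_j\partial_k\Psi+a_j\partial_j\Psi\bigr)+a_0\Bigr]\bar u .
\end{align*}
So if $\Psi$ is a fixed smooth function, independent of $\lambda$, a supersolution needs $a_{jk}\partial_j\Psi\,\partial_k\Psi\le g$ to leading order. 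But two requirements pull against this:
\begin{itemize}
\item Since $u(0,0)\ge 1$, getting $v\le C_re^{c_0r\sqrt\lambda}$ on $\bar Q_r$ for all small $r$ forces $\Psi(0,0)\le 0$.
\item Dominating the data forces $\Psi(r_0e_1,0)\ge Mr_0$, because that point lies on the lateral boundary.
\end{itemize}
Hence $\partial_1\Psi\ge M$ somewhere on the segment joining $(0,0)$ to $(r_0e_1,0)$. At that point $a_{jk}\partial_j\Psi\,\partial_k\Psi\ge cM^2>\sup g$ for $M$ large, with $c$ the ellipticity constant, so the $\lambda$-order term is negative. For the same reason $e^{K\sqrt\lambda|x|}$ and $e^{\sqrt\lambda M'x_1}$ with $K,M'$ large are subsolutions, not supersolutions. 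Since $g\ge 0$ may be small or vanish, essentially the only supersolution available is $\sup(\text{data})\,e^{\alpha(t+T)}$, which gives exactly the non-localized $e^{Mr_0\sqrt\lambda}$ you already noticed. A Harnack-chain substitute is also not straightforward: parabolic Harnack bounds earlier values by later ones, while you need $v$ at $t>0$ as well.

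The missing observation is that no localization is needed, because the profile may depend on $r$. Proposition \ref{prop:PDE:full} fixes a single $r=\min\{\eps/(8c_0),r_0\}$ and uses any solution on $Q_r$ satisfying the bound there; the paper's elliptic profile in Section \ref{sec:unif} is likewise built separately on each $Q_r$. So pose your problem (a) on $Q_r\times(-T,T]$ itself, as the paper does with data $e^{c\sqrt\lambda(x_1-r)}\le 1$. Then:
\begin{itemize}
\item the space-independent supersolution $e^{\alpha(t+T)}$ with $\alpha=\norm{a_0^-}_{L^\infty}$ (this is where $g\ge 0$ is used) gives $u\le e^{2\alpha T}$;
\item the subsolution gives $u(0,0)\ge e^{-c\sqrt\lambda r}$;
\item hence $v=u/u(0,0)\le e^{cr\sqrt\lambda+2\alpha T}$ on the whole cylinder, which is \eqref{criterion:UB} with $c_0=c$.
\end{itemize}
Existence of a classical solution squeezed between the two barriers comes from the monotone sub/supersolution theorem of Pao that the paper invokes.
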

Meanwhile, in $1+m$ dimensional case or $n=1$, we show weaker superlogarithmic estimate for $p=\frac{1}{2}$ is necessary for hypoellipticity 
\begin{thm}\label{parab1}
   Suppose that $L=\dt+a_0(t)+g(t)L_2$ is hypoelliptic near $(t_0,y_0)$. Then $L_2$ must satisfy superlogarithmic estimate \eqref{superlog} of order $p=\frac{1}{2}$
  \begin{equation*}
||(\log\jap{\xi})^{\frac{1}{2}}\hat u(\xi)||^2 \leq \varepsilon'\,\mathrm{Re}(L_2 u,u) + C_{\varepsilon',K} ||u||^2,\quad u\in C_{0}^{\infty}(K).
\end{equation*}
\end{thm}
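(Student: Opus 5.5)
The plan is to reduce Theorem \ref{parab1} to the Logarithmic criterion, Theorem \ref{thm:main-general}, with $p=\tfrac12$. To do this we construct a spectral profile of order $\tfrac12$ in the sense of Definition \ref{BVSP} for $L_1=\dt+a_0(t)$ with weight $g(t)\ge0$. Here $n=1$ and \eqref{Lspec} is the first order ODE
\begin{align*}
  \dt v(t,\lambda) + a_0(t)v(t,\lambda) + g(t)\lambda v(t,\lambda)=0,
\end{align*}
so we simply write down the explicit solution normalized at the origin:
\begin{align*}
  v(t,\lambda)=\exp\Bigl(-\int_0^t \bigl(a_0(s)+\lambda g(s)\bigr)\,ds\Bigr).
\end{align*}
After translation we take $(t_0,y_0)=(0,0)$. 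Since $a_0,g$ are smooth, $v$ is $C^1$ in $t$ and continuous in $(t,\lambda)$, and it solves \eqref{Lspec} classically, hence distributionally in the sense of \eqref{Lsp-d}. Point normalization \eqref{criterion:LWB} holds by construction.

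The remaining task is the upper bound \eqref{criterion:UB} with $p=\tfrac12$, i.e.\ $|v(t,\lambda)|\le C_r e^{c_0 r\lambda}$ on $[-r,r]$. We have
\begin{align*}
  |v(t,\lambda)|\le e^{r\sup|a_0|}\exp\Bigl(\lambda\Bigl|\int_0^t g(s)\,ds\Bigr|\Bigr).
\end{align*}
This gives
\begin{align*}
  |v(t,\lambda)|\le C_r\, e^{r\|g\|_{L^\infty(Q_{r_0})}\lambda},
\end{align*}
with $C_r=e^{r\|a_0\|_\infty}$ and $c_0=\|g\|_{L^\infty}$. These constants depend only on coefficient norms, which matches the requirement in Definition \ref{BVSP}. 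Notice that the exponent $\lambda^{1/(2p)}=\lambda$ forces $p=\tfrac12$. This is the structural reason the one-dimensional parabolic case gains only half a logarithm, in contrast with the $\sqrt\lambda$ growth of elliptic or higher-dimensional parabolic profiles.

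The sign structure deserves care. For $t<0$ the solution actually grows, like $e^{\lambda\int_t^0 g}$, while for $t>0$ it decays. The definition only asks for an upper bound on the symmetric box, so the growth on the backward side is harmless as long as it is at most linear-exponential in $\lambda r$, and that is exactly what $g\in L^\infty$ gives. It is worth checking that we should not flip $t\to -t$ here. The operator $\dt+a_0+gL_2$ is fixed in the statement, and any sign convention is already absorbed into the bound, since we estimate $|\int_0^t g|\le |t|\,\|g\|_\infty$ regardless of direction.

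With the profile in hand, hypoellipticity of $L$ near $(0,0)$ together with Theorem \ref{thm:main-general} yields \eqref{superlog} for $L_2$ with $p=\tfrac12$ near $y_0$, which is the claim. I do not expect a genuine obstacle here, since all the analytic difficulty sits inside Theorem \ref{thm:main-general}. The only point needing attention is bookkeeping: $v$ must be defined for all $\lambda\ge1$ (allowed by Lemma \ref{lem:lambda-one}), and the constants must be uniform in $\lambda$, with $c_0$ independent of $r$.
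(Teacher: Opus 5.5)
Your proposal is correct and is essentially the paper's proof. The paper solves the initial value problem from $t=-T$ and normalizes by $w(0,\lambda)$, which gives exactly your function $\exp\bigl(-\int_0^t (a_0+\lambda g)\,ds\bigr)$, and it then proves the same bound $C e^{c\lambda T}$ so that Definition \ref{BVSP} holds with $p=\tfrac12$ and Theorem \ref{thm:main-general} applies; your two-sided estimate $\bigl|\int_0^t g\bigr|\le |t|\,\|g\|_\infty$ is just a more compact form of the paper's separate upper and lower bounds on $w$.
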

We prove Theorems \ref{parab:n} and \ref{parab1} in \ref{sec:p+} by reducing them to the logarithmic criterion Theorem \ref{thm:main-general}. Meanwhile in section Appendix \ref{sec:opt}, we show that Theorems \ref{main:thm}--\ref{parab1} are optimal in a range of examples.

\section{Uniformly elliptic $L_1$}\label{sec:unif}
In this section we show that the extrapolation problem \eqref{Lspec}-(\ref{criterion:UB}) of order $p=1$ has a solution in the sense of Definition \ref{BVSP} in the case when the operator $L_1$ is uniformly elliptic.\\

\begin{prop}\label{prop:u-est}
Consider the following boundary value problem for $\lambda\ge 1$
\begin{align}\label{L1BV}
  \begin{cases}
    L_1 u(x,\lambda) +  g(x)\lambda u(x,\lambda) =0  ,\quad x\in Q_r:=(-r,r)^n\\
      u(x,\lambda) =e^{c \sqrt{\lambda}(x_1-r)} \text{ on } \partial Q_r
  \end{cases}.
\end{align}
  There exists $c_0=c_0(\inf a_{11},\norm{a_1}_{L^\infty},\norm{a_0^-}_{L^\infty})$ and $r_0=r_0(\inf a_{11},\norm{a_1}_{L^\infty},\norm{a_0^-}_{L^\infty})>0$ such that for $0<r\le r_0$ there exists a unique $C^{2,\alpha}$ solution of \eqref{L1BV} which satisfies
\begin{equation}\label{bounds}
e^{c \sqrt{\lambda}(x_1-r)} \le u(x,\lambda)\le 2
\end{equation}
for $c\ge c_0$.
\end{prop}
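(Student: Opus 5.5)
The plan is a maximum-principle (sub/supersolution) argument combined with classical Schauder theory for uniformly elliptic Dirichlet problems. Set $\phi(x)=e^{c\sqrt{\lambda}(x_1-r)}$, which equals the boundary data, and look for the solution in the form $u=\phi+w$, where $w$ solves a Dirichlet problem with zero boundary data.

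The first step is to show that $\phi$ is a subsolution. A direct computation gives
\begin{align*}
(L_1+g\lambda)\phi=\phi\left(-a_{11}c^2\lambda + a_1 c\sqrt{\lambda} + a_0 + g\lambda\right).
\end{align*}
Since $g\ge 0$ the term $g\lambda$ works against us, so I would use the opposite normalization. I would aim to show $\phi\le u$ by treating $\phi$ as a subsolution of the operator with the $g\lambda$ term moved appropriately. The natural reading is the following. Because $g\lambda u\ge 0$ acts as a nonnegative zeroth-order term, the operator $L_1+g\lambda$ with $a_0+g\lambda\ge -\norm{a_0^-}_{L^\infty}$ satisfies a maximum principle on small boxes. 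For the comparison $\phi\le u$ we need $(L_1+g\lambda)\phi\le 0$, which requires $a_{11}c^2\lambda\ge a_1c\sqrt\lambda+a_0+g\lambda$. This holds for $c\ge c_0$ only if $g$ is bounded, with $c_0^2\inf a_{11}\gtrsim \norm{g}_{L^\infty}+\norm{a_1}_{L^\infty}+\norm{a_0^-}_{L^\infty}$ (using $\lambda\ge1$). I would note that the stated dependence of $c_0$ presumably absorbs $\norm{g}_\infty$, or that the sign convention for $L_1$ handles this. With $c_0$ chosen this way, $\phi$ is a subsolution.

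The second step is the upper bound, which uses a supersolution of the form $\psi=2-e^{Mx_1}$-type barrier or simply $\psi=1+\delta(r^2-|x|^2)$-type adjustments. Since $\phi\le 1$ on $\partial Q_r$, the constant $1$ would be a supersolution if $a_0+g\lambda\ge0$. To handle a negative $a_0$, I would take $\psi=2-\cosh(Kx_1)/\cosh(Kr)$-like corrections or, more simply, $\psi=1+(r^2-x_1^2)A$. Choosing $A\approx\norm{a_0^-}_{L^\infty}/\inf a_{11}$ and then $r_0$ small so that $\psi\le2$ and $L_1\psi\ge0$ gives the bound. This is where the dependence of $r_0$ on $\inf a_{11}$, $\norm{a_1}_{L^\infty}$ and $\norm{a_0^-}_{L^\infty}$ arises.

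The third step is existence and uniqueness. On small boxes the operator $L_1+g\lambda$ is strictly positive, for instance via the weight $\psi$ above, which yields a positive strict supersolution. The maximum principle then holds, giving uniqueness, and Schauder theory on the Lipschitz domain gives $C^{2,\alpha}$ interior regularity. Existence follows either from Perron's method with the barriers $\phi\le u\le\psi$, or by solving for $w=u-\phi$ via Fredholm theory. The main obstacle I expect is the boundary regularity at the corners of $Q_r$. Since the data are smooth, I would take $C^{2,\alpha}$ interior regularity together with continuity up to $\overline{Q}_r$ through exterior-cone barriers, which is all the spectral profile needs. The sign issue with $g\lambda$ in the subsolution step also needs care.
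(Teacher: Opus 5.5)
Your proposal is correct and follows essentially the paper's argument. The paper uses $u_L=e^{c\sqrt{\lambda}(x_1-r)}$ as a subsolution, a positive supersolution pinned between $1$ and $2$ fed into the Protter--Weinberger generalized maximum principle (the paper uses $w=3-e^{\beta(r-x_1)}$ where you use $1+A(r^2-x_1^2)$), and standard elliptic theory for existence and uniqueness (the paper simply cites Ladyzhenskaya--Ural'tseva and does not address the corner regularity you flag). You are right that $c_0$ must depend on $\norm{g}_{L^\infty}$, and the paper's proof says so explicitly. However, the zeroth-order term in the subsolution inequality is controlled by $a_0^+$ (the paper writes $\norm{a_0}_{L^\infty}$), not by $a_0^-$ as you state.
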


Let
\begin{equation}\label{def:tilde-l}
  \tld L:= L_1+ g(x)\lambda
\end{equation}
To prove this Proposition we need the following Generalized Maximum principle from \cite{Prot84} that does not assume a sign of $a_0$ in \eqref{L1} beyond boundedness. Note that our use of a minus sign in front of the elliptic term in \eqref{L1} is the opposite of the convention used in \cite{Prot84} and we adjust the signs in the statement below.
\begin{thm}\cite[Chapter 2, Theorem 10]{Prot84}\label{GMP}
  Suppose that $v(x)$ satisfies the differential inequality
  \begin{align}\label{Lsup}
    \tld L v\le 0
  \end{align}
  in the domain $Q_r$, where $\tld L$ is defined by (\ref{def:tilde-l}) and $L_1$ is uniformly elliptic in the sense of \eqref{L1}. If there exists a function $w$, such that
  \begin{align*}
    w>0 \text{ on }\bar{Q_r}\\
    \tld Lw\ge 0 \text{ in } Q_r
  \end{align*}
  then $\frac{v}{w}$ cannot attain a nonnegative maximum in $Q_r$ unless it is constant.
\end{thm}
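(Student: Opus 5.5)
We reduce the statement to the classical strong maximum principle of E.~Hopf for operators with zeroth-order coefficient of the favorable sign. We do this by dividing out the positive comparison function $w$. Throughout we assume, as is implicit in the statement, that $v,w\in C^2(Q_r)$, $w\in C^1(\bar Q_r)$, and that the coefficients of $L_1$ and $g$ are bounded on $Q_r$.

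\textbf{Step 1: conjugation by $w$.} Set $z:=v/w$, which is well defined and $C^2$ on $Q_r$ since $w>0$ on $\bar Q_r$. Expanding $v=wz$ with the product rule, and using the symmetry of $(a_{jk})$, we get
\begin{align*}
\tld L(wz) = w\Big(-\sum_{j,k} a_{jk}\dxp{j}\dxp{k} z + \sum_j b_j \dxp{j} z\Big) + z\,\tld L w ,
\end{align*}
where
\begin{align*}
b_j := a_j - \frac{2}{w}\sum_k a_{jk}\dxp{k} w .
\end{align*}
Dividing by $w>0$ and using $\tld L v\le 0$ gives
\begin{align*}
M z := -\sum_{j,k} a_{jk}\dxp{j}\dxp{k} z + \sum_j b_j\dxp{j} z + h\,z \le 0 \quad\text{in } Q_r, \qquad h:=\frac{\tld L w}{w}\ge 0 .
\end{align*}
On every compact subset of $Q_r$ the coefficients $b_j$ are bounded, because $w$ is bounded below by a positive constant and $w\in C^1$. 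The coefficient $h$ is nonnegative, and it is locally bounded when $w\in C^2$.

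\textbf{Step 2: apply Hopf's strong maximum principle to $M$.} In the sign convention of \eqref{L1}, $M$ is uniformly elliptic with zeroth-order coefficient $h\ge0$. This is exactly the favorable sign: in the convention of \cite{Prot84} it corresponds to $c=-h\le 0$. Hopf's theorem then says that a subsolution $Mz\le 0$ cannot attain a nonnegative maximum at an interior point of a connected domain unless it is constant. The sketch is the usual one. Suppose $z$ attains a maximum $M_0\ge0$ in $Q_r$ but is not constant. Choose a ball $B\subset Q_r$ with $z<M_0$ in $B$ and a boundary point $x^*\in\partial B$ with $z(x^*)=M_0$. Use the barrier $\psi=e^{-\alpha|x-x_B|^2}-e^{-\alpha R^2}$ on an annular region near $x^*$. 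For $\alpha$ large, $\psi$ satisfies $M\psi<0$; here local boundedness of $b_j$ and $h$ is used, and the term $h\psi$ is handled because $h\ge0$ and $\psi\ge 0$ on the annulus. Now $z+\varepsilon\psi-M_0$ satisfies $M(z+\varepsilon\psi-M_0)\le -hM_0<0$, using $h\ge 0$ and $M_0\ge0$. Hence it cannot have a nonnegative interior maximum in the annulus, which forces $\partial_\nu z(x^*)>0$. This contradicts $\nabla z(x^*)=0$ at an interior maximum. Since $Q_r$ is connected, $z=v/w$ is constant.

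\textbf{Main obstacle.} The delicate point is the sign bookkeeping in Step 2. The hypothesis $\tld Lw\ge0$ is what turns an operator $\tld L$ with zeroth-order term $a_0+g\lambda$ of arbitrary sign into an operator $M$ whose zeroth-order term has the good sign. The restriction to a \emph{nonnegative} maximum is exactly what makes the term $h\,M_0$ harmless in the barrier argument.

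A secondary issue is that $b_j$ involves $\nabla w/w$, and $h$ involves $\tld L w$. Both are controlled only locally, so the barrier argument is run on small balls compactly contained in $Q_r$. This is why positivity of $w$ up to $\bar Q_r$ is assumed.
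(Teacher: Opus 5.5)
Your argument is correct and is essentially the standard proof of this generalized maximum principle in the cited source \cite{Prot84}; the paper itself only quotes the result. Writing $v=wz$ gives an operator whose zeroth-order coefficient is $\tilde L w/w\ge 0$, and Hopf's strong maximum principle then applies. There are two small slips in your sketch of Hopf's argument:
\begin{itemize}
\item The strict inequality $M(z+\varepsilon\psi-M_0)<0$ comes from $\varepsilon M\psi<0$, not from $-hM_0$, which vanishes when $M_0=0$ or $h=0$.
\item The term $h\psi\ge 0$ has the unfavorable sign for proving $M\psi<0$, so its sign does not help. It is absorbed instead by the local boundedness of $h$, using $\psi\le e^{-\alpha|x-x_B|^2}$ and taking $\alpha$ large.
\end{itemize}
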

\begin{proof}[Proof of Proposition \ref{prop:u-est}]
First, by \cite[Chapter 3 Theorem 1.2]{Lady} there exists $r_0>0$, more precisely $r_0=r_0(\inf a_{11},\norm{a_1}_{L^\infty},\norm{a_0^-}_{L^\infty})$ such that for $r\le r_0$ uniqueness holds for $C^{2,\alpha}$ solutions of \eqref{L1BV} for each $\lambda\ge 1$. Next, \cite[Chapter 3 Theorem 1.1]{Lady} implies existence of a $C^{2,\alpha}$ solution to \eqref{L1BV} on $Q_r$.\\
We now prove that the solution satisfies (\ref{bounds}). First we construct a barrier function $w$ which satisfies conditions of Theorem \ref{GMP}. Define
\begin{align}\label{w}
  w=3 -\exp(\beta (r-x_1))
\end{align} where $\beta>0$ is chosen below.
Let $r\leq 1$, then for every $\lambda>0$ we have
\begin{align}\label{prelim-est}
(L_1 +  \lambda g(x))w&=3(a_0+g(x)\lambda)+\left(a_{11}\beta^2+a_1\beta-a_0-g(x)\lambda\right)\exp(\beta (r-x_1))\notag \\
&\ge -3\norm{a_0^-}_{L^\infty(Q_1)} + \left((\inf_x a_{11}(x)\beta^2-\norm{a_1^-}_{L^\infty(Q_1)})\beta -\norm{a_0^+}_{L^\infty}\right)\exp(\beta (r-x_1))\notag\\
&\quad+g(x)\lambda(3 -\exp(\beta (r-x_1)))\notag\\
&\ge ((\inf_x a_{11}(x)\beta-\norm{a_1}_{L^\infty(Q_1)})\beta -4\norm{a_0}_{L^\infty(Q_1)})+g(x)\lambda(3 -\exp(\beta (r-x_1))),
\end{align}
where we used $\exp(\beta (r-x_1))\ge 1$ on $Q_r$ since $\beta>0$.
We now show $(\inf_x a_{11}(x)\beta-\norm{a_1}_{L^\infty})\beta -4\norm{a_0}_{L^\infty(Q_1)}\geq 0$ for an appropriately chosen $\beta>0$.  Define
\begin{align*}
	\beta_0=\frac{\norm{a_1}_{L^\infty}+1}{\inf_x a_{11}(x)} \text{ and } \beta_1=4\norm{a_0}_{L^\infty},
\end{align*}
and let $\beta=\max\{\beta_0,\beta_1\}>0$. For $x\in \overline{Q_r}$ we have $x_1\in [-r,r]$, and thus
\begin{align*}
	3 -\exp(2\beta r)\le w(x)\le 3-1 \text{ in } \overline{Q_r}.
\end{align*}
Moreover, $(\inf_x a_{11}(x)\beta-\norm{a_1}_{L^\infty})\beta -4\norm{a_0}_{L^\infty(Q_1)}\geq 0$ in $\overline{Q_r}$ by the choice of $\beta$.
Let
\begin{align*}
	r_0=\min\left\{\frac{\log 2}{2\beta},1\right\},
\end{align*}
then for $0<r\le r_0$ we have
\begin{equation}\label{w-bound}
	1\le w(x)\le 2 \text{ in } \overline{Q_r}.
	\end{equation}
Combining with (\ref{prelim-est}) and the estimate $(\inf_x a_{11}(x)\beta-\norm{a_1}_{L^\infty})\beta -4\norm{a_0}_{L^\infty(Q_1)}\geq 0$ obtain above, we conclude
\begin{align*}
	(L_1 +  \lambda g(x))w\ge 0\text{ in } Q_r.
\end{align*}
Applying Theorem \ref{GMP} to the solution of \eqref{L1BV}, $u(x)$, we obtain
\begin{equation}\label{u-bound}
\sup_{Q_r} \frac{u^+(x)}{w(x)} \le \sup_{\partial Q_r}\frac{u^+(x)}{w(x)}.
\end{equation}

Next, define
  \begin{align}\label{u-low}
  u_{L}=e^{c \sqrt{\lambda}(x_1-r)}
  \end{align}
  and note that we have pointwise estimate
  \begin{align*}
  e^{-2c \sqrt{\lambda}r}\le u_{L}(x)\le 1 \text{ when } |x_1|\le r.
  \end{align*}
  Moreover, substituting $u_L$ into $\tld L$ we obtain
  \begin{align*}
  \tld Lu_L= [-a_{11} c^2 \lambda + a_1 c \sqrt{\lambda} +\lambda g+ a_0]u_L.
  \end{align*}
  As $\lambda\ge 1$ in hypothesis of Definition \ref{BVSP}, choosing $$c\ge c_0=c_0(\inf a_{11},\norm{a_1}_{L^\infty},\norm{a_0}_{L^\infty},\norm{g}_{L^\infty})$$ large enough we can assure that
  \begin{align}\label{Lu-low}
  \tld L u_L \le 0.
  \end{align}
  The boundary condition in \eqref{L1BV} implies that $u(x)=u_L(x)$ on $\partial Q_r$. Meanwhile, by construction $u_L\le 1$ on $\partial Q_r$. Combining these estimates with (\ref{w-bound}) and (\ref{u-bound}) we obtain.
  \begin{align*}
    \sup_{Q_r} \frac{u^+(x)}{w(x)}  \le 1.
  \end{align*}
Since we showed that $w\leq 2$, we obtain the estimate $\sup_{Q_r} u^+(x)\leq 2$. This gives the upper bound claimed in Proposition \ref{prop:u-est}.\\
We now apply Theorem \ref{GMP} to $v(x)=u_L(x)-u(x)$. We compute
  \begin{align*}
    \tld L v = \tld L u_L\le 0,
  \end{align*}
  and therefore,
  \begin{align*}
    \sup_{Q_r} \frac{(u_L(x)-u(x))^+}{w(x)} \le \sup_{\partial Q_r}\frac{(u_L(x)-u(x))^+}{w(x)}=0.
  \end{align*}
Using the boundary condition in \eqref{L1BV} we obtain
\begin{align*}
    u_L(x)-u(x)\le 0.
  \end{align*}
  This gives $u(x)\geq u_L(x)=e^{c \sqrt{\lambda}(x_1-r)}$ and concludes the proof.
\end{proof}
We conclude the section by defining
  \begin{align*}
    v(x,\lambda)=\frac{u(x,\lambda)}{u(0,\lambda)}
  \end{align*}
  By linearity of \eqref{L1BV}, $v$ solves \eqref{Lspec}. \eqref{criterion:LWB} is true by construction. Finally, \eqref{criterion:UB} is immediate from the Proposition \ref{prop:u-est}.\\
Finally, because $v$ is a classical solution of \eqref{Lspec}, it satisfies \eqref{Lsp-d}.
\section{Parabolic case}\label{sec:p+}
\subsection{One dimensional case}
To prove Theorem \ref{parab1}, by Theorem \ref{thm:main-general}, the argument reduces to the following.
\begin{prop}\label{1d-parab}
The family
  \begin{align*}
  v(t,\lambda)=\frac{w(t,\lambda)}{w(0,\lambda)}
\end{align*}
where $w$ solves the following initial value problem\footnote{Since we are addressing regularity around $(0,0)$, our initial values are posed before $t=0$}
\begin{align*}
\begin{cases}
  \dt w + [a_0(t)  + g(t)\lambda] w = 0 \text{ on } (-T,T)\\
  w(-T)=1
\end{cases}
  \end{align*}
satisfies Definition \ref{BVSP} with $p=\frac{1}{2}$
\end{prop}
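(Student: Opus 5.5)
The plan is to solve the ODE explicitly and then check the three conditions of Definition \ref{BVSP} one at a time. For $t\in(-T,T)$ the linear first order ODE integrates to
\begin{align*}
  w(t,\lambda)=\exp\Big(-\int_{-T}^t [a_0(s)+g(s)\lambda]\,ds\Big),
\end{align*}
which is positive, $C^1$ in $t$ and continuous in $(t,\lambda)$. Hence
\begin{align*}
  v(t,\lambda)=\frac{w(t,\lambda)}{w(0,\lambda)}=\exp\Big(-\int_0^t [a_0(s)+g(s)\lambda]\,ds\Big).
\end{align*}

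Condition \eqref{criterion:LWB} holds by construction. For \eqref{Lspec}, the operator is linear and $w(0,\lambda)$ is a constant in $t$, so $v$ solves the same ODE classically on $(-T,T)$. It therefore solves it on every $Q_r=(-r,r)$ with $r\le r_0:=T$. Integrating by parts against $\phi\in C_0^\infty(Q_r)$ then gives the distributional identity \eqref{Lsp-d}, with $L_1^*=-\dt+a_0$.

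The main point is the upper bound \eqref{criterion:UB} with $p=\tfrac12$, i.e.\ $|v|\le C_r e^{c_0 r\lambda}$ on $[-r,r]$. We split into two cases. For $t\ge0$ we use $g\ge0$, so $\int_0^t g\lambda\ge0$ and the $\lambda$-term only helps; this gives $v\le e^{r\|a_0\|_{L^\infty}}$. For $t<0$ we have $-\int_0^t[\cdots]=\int_t^0[a_0+g\lambda]$, which is at most $r\|a_0\|_{L^\infty}+r\lambda\|g\|_{L^\infty(-T,T)}$. Together these give
\begin{align*}
  |v(t,\lambda)|\le e^{r\|a_0\|_{L^\infty}}\,e^{\|g\|_{L^\infty}r\lambda}\quad\text{on } \bar Q_r .
\end{align*}
We then set $C_r=e^{r\|a_0\|_\infty}$ and $c_0=\|g\|_{L^\infty}$, using $\lambda^{1/(2p)}=\lambda$ for $p=\tfrac12$.

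The only step needing care is this $t<0$ case. There the growth is genuinely linear in $\lambda$, which is why only $p=\tfrac12$, and not $p=1$, is available. One small wrinkle is that $c_0$ depends on $\|g\|_\infty$ and not only on $L_1$'s coefficients. This matches the dependence already allowed in Section \ref{sec:unif}, where $c_0$ also involved $\|g\|_{L^\infty}$, so I will note it and restrict to a bounded neighborhood where $g$ is bounded.
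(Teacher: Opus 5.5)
Your proof is correct and follows the paper's route: you solve the ODE explicitly and read off the bound, with $c_0=\norm{g}_{L^\infty}$ depending on $g$ exactly as in the paper. The paper instead bounds $w(t,\lambda)$ from above and $w(0,\lambda)$ from below separately and identifies $r$ with $T$. You write $v=\exp(-\int_0^t[a_0+g\lambda])$ directly and split on the sign of $t$, which gives the $r$-scaled bound \eqref{criterion:UB} on every $\bar Q_r$, $r\le T$, for a single fixed $v$; you also check \eqref{Lsp-d} and joint continuity explicitly, which the paper leaves implicit.
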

\begin{proof}
  The solution of the system above is
  \begin{align*}
    w(t,\lambda)=\exp(-\int_{-T}^t [a_0(t') + g(t') \lambda ] dt'),
  \end{align*}
  and thus $w$ is bounded above:
  \begin{align*}
    w(t,\lambda)\le \exp((t+T)\norm{a_0^-}_{L^\infty})
  \end{align*}
  where $a_0^-$ is the negative part of $a_0(t)$. Similarly, $w$ is bounded below
  \begin{align*}
    w(t,\lambda)\ge \exp\left(-(t+T)[\norm{g}_{L^\infty}\lambda+\norm{a_0^+}_{L^\infty}]\right).
  \end{align*}
  Therefore
  \begin{align*}
    0<v(t,\lambda)\le C(T,a_0,g)e^{c\lambda T}\quad \text{for all}\  t\in (-T,T),
  \end{align*}
where $c=c(\norm{g}_{L^\infty},\norm{a_0}_{L^\infty})$.
  As we relabeled $r$ from Definition \ref{BVSP} as $T$, this concludes the proof.
  \end{proof}
\subsection{Higher dimensional case}
In this section we show that the extrapolation problem \eqref{Lspec}-(\ref{criterion:UB}) of order $p=1$ has a solution in the sense of Definition \ref{BVSP} in the case when $L_1$ is a parabolic operator with a uniformly elliptic part. \\

We define as in the previous section (replacing $n$ by $n-1$)
\begin{align}\label{Lt}
  \tld L= -\sum_{j,k=1}^{n-1} a_{jk}(x,t)\dxp{j}\dxp{k} +\sum_{j=1}^{n-1} a_j(x,t) \dxp{j} + a_0(x,t) + g(x,t)\lambda
\end{align}
Let $Q_r=(-r,r)^n$ as before, and for $T>0$ define a parabolic cylinder and its boundary as in \cite[Chapter 7.1]{Ev98}
\begin{align*}
  Q_{r,T}=Q_r\times (-T,T] \text{ and } \Gamma_T=\overline{Q_{r,T}}-Q_{r,T},
\end{align*}
where just like in Proposition \ref{1d-parab} the initial moment in time is $t=-T$.
As in elliptic case considered in section \ref{sec:unif} we use barriers to establish existence and appropriate upper and lower bounds. Define
\begin{align}\label{Lt0}
\tld L_0= -\sum_{j,k=1}^{n-1} a_{jk}(x,t)\dxp{j}\dxp{k} +\sum_{j=1}^{n-1} a_j(x,t) \dxp{j}.
\end{align}
Then $\tld L = \tld L_0 + f(x,t,u)$, where $f(x,t,u)=[a_0(x,t) + g(x,t)\lambda]\cdot u$ is linear and hence locally Lipschitz in $u$.

Consider the following initial boundary value problem for $\lambda\ge 1$
\begin{align}\label{L1IBV}
  \begin{cases}
    \dt u + \tld L_0 u = - (a_0+\lambda g)u  \text{ on } Q_{r,T}\\
      u(x,t)=u_L \text{ on }\Gamma_T
  \end{cases}
\end{align}

We will show that its classical solution, rescaled appropriately, satisfies Definition \ref{BVSP} for $L_1=\dt + \tld L$ with $p=1$.

\begin{prop}\label{parab:close}
	There exists a unique classical solution, $u(x,t,\lambda)$, of \eqref{L1IBV} for every $\lambda\ge 1$. Moreover, $v(x,t,\lambda)$ defined by
	\begin{align}\label{v-def}
	v(x,t,\lambda):=\frac{u(x,t,\lambda)}{u(0,0,\lambda)},
	\end{align}
	satisfies Definition \ref{BVSP} for $L_1=\dt + \tld L$ with $p=1$.
\end{prop}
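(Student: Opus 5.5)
The plan is to mirror the elliptic argument of Proposition \ref{prop:u-est}, replacing the elliptic maximum principle by the parabolic one on the cylinder $Q_{r,T}$ with parabolic boundary $\Gamma_T$. Here $u_L$ is taken as in \eqref{u-low}, $u_L=e^{c\sqrt{\lambda}(x_1-r)}$, now viewed as a function of $(x,t)$.

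\textbf{Existence and uniqueness.} These follow from standard linear parabolic theory for uniformly parabolic equations with smooth (Hölder) coefficients and smooth boundary data. For instance, one can use the Schauder theory of Ladyzhenskaya--Solonnikov--Uraltseva, or the sub/supersolution method for $\dt u+\tld L_0u=-f(x,t,u)$ with $f$ Lipschitz in $u$. There is one technicality: $Q_r$ is a box, so its corners are not smooth, and the data $u_L$ must be compatible at the corner $t=-T$. Since $u_L$ is a genuine function on $\overline{Q_{r,T}}$, compatibility of order zero holds automatically. If classical regularity up to the corners is problematic, I would instead solve on a smooth domain squeezed between $Q_{r/2}$ and $Q_r$; only interior estimates and a maximum principle are needed. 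Uniqueness follows from the parabolic maximum principle after the change $u=e^{Mt}\tilde u$ with $M\ge\norm{a_0^-}_{L^\infty}$, which makes the zeroth order coefficient nonnegative.

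\textbf{Barriers.} For the lower bound, set $U=u_L$. Then
\[
(\dt+\tld L)u_L=[-a_{11}c^2\lambda+a_1c\sqrt\lambda+a_0+g\lambda]u_L\le 0
\]
for $c\ge c_0(\inf a_{11},\norm{a_1},\norm{a_0},\norm g)$, exactly as in \eqref{Lu-low}, since $\dt u_L=0$. Thus $u_L$ is a subsolution agreeing with $u$ on $\Gamma_T$. The comparison principle, applied to $e^{-Mt}(u_L-u)$, then gives $u\ge u_L>0$.

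For the upper bound, I would use the barrier $w=3-e^{\beta(r-x_1)}$ from \eqref{w}. It satisfies $1\le w\le 2$ and $(\dt+\tld L)w\ge0$ for $r\le r_0$, by the same computation as \eqref{prelim-est}. Comparing $u/w$ on the parabolic cylinder gives $u\le 2\sup_{\Gamma_T}u_L\le 2$. A parabolic version of Theorem \ref{GMP} would be needed here; alternatively, I would pass to $\tilde u=e^{-Mt}u$ and use $e^{Mt}$-type barriers, at the cost of constants depending on $T$. Either way,
\[
e^{-2c\sqrt\lambda r}\le u\le C \quad\text{on } \overline{Q_{r,T}}.
\]

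\textbf{Verifying Definition \ref{BVSP}.} Take $T=r$ so that the cylinder is the box $(-r,r)^n$ in the $(x,t)$ variables. Since $u(0,0,\lambda)\ge e^{-c\sqrt\lambda r}$, we get
\[
|v|\le Ce^{c r\sqrt\lambda}=Ce^{c_0r\lambda^{1/2p}}
\]
with $p=1$, which is \eqref{criterion:UB}. Property \eqref{criterion:LWB} holds by construction, and \eqref{Lspec} together with its distributional form \eqref{Lsp-d} holds because $v$ is a classical solution, by linearity. Continuity in $\lambda$ follows from continuous dependence of the solution on the coefficient $\lambda g$, again via the maximum principle applied to differences.

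The main obstacle I expect is the upper bound when $a_0$ has no sign. There the elliptic barrier $w$ must be combined with a time weight, and one must check that the resulting constant $C_r$ is independent of $\lambda$. The helpful point is that $g\lambda\ge0$ only improves $(\dt+\tld L)w$, since $w>0$.
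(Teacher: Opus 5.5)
Your proposal is correct and is essentially the paper's argument. The paper also uses the time-independent $u_L$ as lower solution, so that $(\dt+\tld L)u_L=\tld L u_L\le 0$. It then invokes Pao's upper/lower-solution theorem, which gives existence, uniqueness and the two-sided bound in one step; your extra remarks on the corners of the box and on continuity in $\lambda$ cover points the paper leaves implicit. The paper's upper solution is exactly your fallback, $\hat u=e^{\alpha(t+T)}$ with $\alpha=\norm{a_0^-}_{L^\infty}$: since $(\dt+\tld L)\hat u=(\alpha+a_0+g\lambda)\hat u\ge 0$ and $\hat u\ge 1\ge u_L$ on $\Gamma_T$, the obstacle you anticipate does not arise, and the resulting bound $u\le e^{2\alpha T}$ is independent of $\lambda$ and needs no smallness of $r$, whereas your elliptic barrier $w$ works only for $r\le r_0$.
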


\begin{proof}
	
	We will use the approach of \cite{Pao} to show there exists a solution to (\ref{L1IBV}) with appropriate upper and lower bounds. In order to do this, we first construct upper and lower solutions, see Definition 3.1 in \cite[Chapter 2]{Pao}. Let $\tilde{u}:=u_L$ where $u_L(x)$ defined in \eqref{u-low} with $c\ge c_0=c_0(\inf a_{11},\norm{a_1}_{L^\infty},\norm{a_0}_{L^\infty})$ satisfies \eqref{Lu-low}, and thus
	\begin{align*}\label{BVt}
	  \begin{cases}
		\dt \tilde{u} + \tld L_0 \tilde{u} \leq - (a_0+\lambda g)\tilde{u}  \text{ on } Q_{r,T}\\
		\tilde{u}(x,t)=u_L \text{ on }\Gamma_T
	\end{cases}
	\end{align*}
	This shows that $\tilde{u}$ is a lower solution to (\ref{L1IBV} for any fixed $r>0$\footnote{Unlike the elliptic case there is no restriction on $r$ here}..
	For the upper solution, let $\alpha=||a_0^-||_{L^\infty}\geq 0$, and define the space independent function $\hat{u}:=e^{\alpha (t+T)}$. We have using the fact that $g\geq 0$
	\[
	\dt \hat{u} + \tld L\hat{u}\ge 0,
	\]
	so $\hat u$ is an upper-solution of \eqref{Lspec}.\\	
Since $f(x,t,u)=[a_0(x,t) + g(x,t)\lambda]\cdot u$	is locally Lipschitz, we can apply Theorem 4.1 \cite[Chapter 2]{Pao} to conclude that  there exists a unique classical solution $u$ of (\ref{L1IBV}) satisfying $\tilde{u}\leq u\leq \hat{u}$ in $\overline{Q_{r,T}}$. Recalling the definitions, we thus have writing $u(x,t,\lambda)=u(x,t)$
\begin{align}\label{parab:bounds}
e^{c\sqrt{\lambda}(x_1-r)}\leq u(x,t,\lambda)\leq e^{\alpha(t+T)}.
\end{align}
	Now consider $v(0,0,\lambda)$ defined in (\ref{v-def}). By construction $v(0,0,\lambda)=1$.\\
	From the lower bound in \eqref{parab:bounds} for $(x,t)\in Q_{r,T}$
	\begin{align*}
	|v(x,t,\lambda)|\le  e^{2\sqrt{\lambda}r}|u(x,t)|\le e^{2\sqrt{\lambda}r+2\alpha T}
	\end{align*}
	Finally, by linearity $v$ solves \eqref{Lspec}.
	\end{proof}

By Theorem \ref{thm:main-general}, $L_2$ must satisfy superlogarithmic estimate \eqref{superlog} of order $1$. This concludes the proof of Theorem \ref{parab:n}.

\section{Proof of Theorem \ref{thm:main-general}: low frequencies}\label{sec:gen:low}
To prove Theorem \ref{thm:main-general}, we examine how hypoellipticity of the operator $L$ in \eqref{Lgeneral} interacts with the spectral projections $E_\lambda$ for $L_2$ (defined in Appendix \ref{appendix-spectral}). We state our strategy informally before elaborating on details. Our focus here is on the $y$ variables, where $(i\xi)^\alpha \hat u(\xi)= \widehat{\dy^\alpha u}(\xi)$. Meanwhile on the domain of $L_2$, $L_2$ acts as $\lambda$. We distinguish between two frequency regions.
\begin{itemize}
  \item Low frequency. As our argument below explains for $\lambda\les \log^{2p} \jap{\xi}$ we will use hypoellipticity of the full operator $L$. Intuitively, for this frequency threshold condition we have
\begin{align*}
  e^{\eps \lambda^{\frac{1}{2p}}}\les \jxi^\eps
\end{align*}
\item High frequency $\lambda \gtrsim \log^{2p}\jap{\xi}$. In this region we will show in section \ref{sec:gen:finish} that $\log^p\jap{\xi}$ can be controlled by the operator.
\end{itemize}
We begin the low frequency argument as follows. Let $B$ is the Friedreich extension of the operator $L_2$ to its maximal domain in $L^2$, see Appendix \ref{appendix-spectral}.

 \begin{prop}[Full operator solutions]\label{prop:PDE:full}
 Suppose the spectral problem \eqref{Lspec} satisfies Definition \ref{BVSP} for $r_0>0$.
Let $\eps>0$ and define $r=\min\{\frac{\eps}{8c_0},r_0\}$ with $c_0$ from \eqref{criterion:UB}.
Let $v(x,\lambda)$ be a solution of \eqref{Lspec} on $Q_r$ satisfying Definition \ref{BVSP}. Let $u(y)\in L^2_y(K)$ be such that
\begin{align}\label{spectral:assumption}
  e^{\eps B^{\frac{1}{2p}}} u\in L^2
\end{align}  Define $w$ by
 \begin{align}\label{spectral-construction-Laplace}
   w(x,y) = v(x,B) u(y) = \int_1^\infty v(x,\lambda) dE_{\lambda} u(y).
 \end{align}
  Then
 \begin{align}\label{w:est}
   \norm{w}_{L^2_{Q_r\times K}}\le C(\eps,r) \norm{e^{\eps B^{\frac{1}{2p}}} u}_{L^2(K)}
 \end{align}
  Finally,   $w$ is a distributional solution of
\begin{align}\label{PDE-solution}
  \begin{cases}
    Lw=0 \text{  on } Q_r\times K;\\
    w(0,y)=u(y)
  \end{cases}
\end{align}

 \end{prop}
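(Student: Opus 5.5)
The plan is to treat $w=v(x,B)u$ as a spectral integral and verify the three claims in turn: the $L^2$ bound, the equation, and the trace at $x=0$.

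\textbf{The $L^2$ bound.} For each fixed $x\in Q_r$, the spectral theorem (Appendix \ref{appendix-spectral}) gives
\begin{align*}
\norm{w(x,\cdot)}_{L^2}^2=\int_1^\infty |v(x,\lambda)|^2\, d\norm{E_\lambda u}^2 .
\end{align*}
Here Lemma \ref{lem:lambda-one} lets us assume the spectrum lies in $[1,\infty)$. By \eqref{criterion:UB},
\begin{align*}
|v(x,\lambda)|\le C_r e^{c_0 r\lambda^{1/2p}}\le C_r e^{\eps\lambda^{1/2p}/8},
\end{align*}
because $r\le \eps/(8c_0)$. It follows that
\begin{align*}
\norm{w(x,\cdot)}^2\le C_r^2\int e^{2\eps\lambda^{1/2p}}\,d\norm{E_\lambda u}^2=C_r^2\norm{e^{\eps B^{1/2p}}u}^2 .
\end{align*}
Integrating over $x\in Q_r$ gives \eqref{w:est} with $C(\eps,r)=C_r(2r)^{n/2}$.

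\textbf{Measurability and the trace.} Continuity of $v$ in $(x,\lambda)$ together with dominated convergence makes $x\mapsto w(x,\cdot)$ continuous into $L^2_y$. The dominating function is $e^{2\eps\lambda^{1/2p}}$, which is integrable against $d\norm{E_\lambda u}^2$ by \eqref{spectral:assumption}. So $w$ is jointly measurable, and the trace $w(0,\cdot)$ makes sense. By \eqref{criterion:LWB}, $w(0,y)=\int_1^\infty dE_\lambda u=u$.

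\textbf{The equation.} Take a test function; by density it suffices to use $\phi(x)\psi(y)$ with $\phi\in C_0^\infty(Q_r)$ and $\psi\in C_0^\infty(K)$. We must show
\begin{align*}
\iint w\,\bigl(L_1^*\phi\,\psi+g\phi\,L_2\psi\bigr)=0.
\end{align*}
\begin{enumerate}
\item Since $L_2$ is symmetric and $\psi\in\dom B$, we have $(w(x,\cdot),L_2\psi)=(Bw(x,\cdot),\psi)$, with $Bw(x,\cdot)=\int\lambda v(x,\lambda)\,dE_\lambda u$. This lies in $L^2$ because $\lambda e^{\eps\lambda^{1/2p}/8}\les e^{\eps\lambda^{1/2p}}$.
\item Write $\mu_{u,\psi}(\lambda)=(E_\lambda u,\psi)$, a complex measure of finite total variation weighted by $e^{\eps\lambda^{1/2p}}$. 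The left side becomes
\begin{align*}
\int_Q\int_1^\infty\bigl[v(x,\lambda)L_1^*\phi(x)+g(x)\lambda v(x,\lambda)\phi(x)\bigr]\,d\mu_{u,\psi}(\lambda)\,dx .
\end{align*}
\item Fubini applies thanks to the uniform bound on $|v|$ over $\bar Q_r$ and the weighted finiteness of $\mu_{u,\psi}$. Swapping the integrals, the inner $x$-integral vanishes for every $\lambda$ by \eqref{Lsp-d}.
\end{enumerate}

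\textbf{Main obstacle.} The delicate step is justifying Fubini together with the identity $(w,L_2\psi)=(Bw,\psi)$. This requires $K$ to sit inside the region where $B$ is the Friedrichs extension with $C_0^\infty(K)\subset\dom B$, and the measure $|\mu_{u,\psi}|$ to be controlled by $\norm{e^{\eps B^{1/2p}}u}\,\norm{\psi}$. I would handle the latter by Cauchy–Schwarz in the spectral representation, writing
\begin{align*}
d|\mu_{u,\psi}|\le \bigl(d\norm{E_\lambda u}^2\bigr)^{1/2}\bigl(d\norm{E_\lambda\psi}^2\bigr)^{1/2}.
\end{align*}
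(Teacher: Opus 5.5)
Your proposal is correct and follows the same route as the paper. Both arguments use three ingredients:
\begin{enumerate}
\item the pointwise-in-$x$ spectral bound coming from \eqref{criterion:UB} with $2c_0 r\le \eps/4$;
\item $v(0,B)=I$, which follows from the point normalization;
\item operational calculus plus Fubini, which reduce $(w,L^*\phi)$ to \eqref{Lsp-d} for each fixed $\lambda$.
\end{enumerate}
Your extra steps make rigorous what the paper only sketches: continuity of $x\mapsto w(x,\cdot)$ into $L^2_y$ (so the trace at $x=0$ makes sense), the reduction to tensor-product test functions, and the Cauchy--Schwarz bound on the total variation of $(E_\lambda u,\psi)$.
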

 \begin{proof}
 	We first show (\ref{w:est}).
  Using \eqref{spectral-construction-Laplace} and \eqref{criterion:UB} we have for $\varepsilon$ sufficiently small
  \begin{multline}\label{pde:full:est}
    \norm{w(x,\cdot)}_{L^2_{K}}\le \int_1^\infty|v(x,\lambda)|^2 d\norm{E_{\lambda} u}_{L^2_y(K)}\\
    \le \int_1^\infty  (C(r)e^{c_0r\lambda^{\frac{1}{2p}} })^2  d\norm{E_{\lambda} u(y)}^2 \\
   \le C(r)\int_1^\infty e^{\frac{\eps}{4}\lambda^{\frac{1}{2p}}}  d\norm{E_{\lambda} u(y)}^2\le C(r)\norm{e^{\eps B^{\frac{1}{2p}}} u}_{L^2(K)}<\infty.
  \end{multline}
  Therefore, $u\in D(v(x,B))$ and \eqref{w:est} holds.\\

  By \eqref{criterion:LWB}, $v(0,B)$ is an identity operator on $L^2$, so $w(0,y)=u(y)$ for $u\in L^2_y(K)$.\\

  It remains to show that $w$ is a distributional solution of \eqref{PDE-solution}. Let $\phi\in C^\infty_0(Q_r\times K)$. Note that as $w\in L^2_{loc}$ the $(w,L^*\phi)$ pairing makes sense as an integral. By operational calculus
  \begin{align*}
    (w,L^*\phi)=\int_{Q_r} \int_1^\infty v(x,\lambda)   (dE_{\lambda}u(y),L_1^* \phi)  dx \\ +  \int_{Q_r} \int_1^\infty g(x,t)\lambda v(x,\lambda)   (dE_{\lambda}u(y), \phi)  dx
  \end{align*}
  where the pairings are well-defined as $\lambda \le C(\eps,r)e^{\frac{\eps}{4}\lambda^{\frac{1}{2p}}}$ and we allowed room for a slightly bigger exponent in the last line of \eqref{pde:full:est}.\\
  By Fubini we interchange the order of integration and apply \eqref{Lsp-d}. Hence \eqref{PDE-solution} holds distributionally.
 \end{proof}
 \subsection{Uniform estimates on solutions using hypoellipticity}
 In this section, we establish quantitative Sobolev space estimates for solutions $w$ of $Lw=0$ for $L$ that is locally hypoelliptic. Arguments follow
the outline of Propositions 15 and 16 in \cite{AkhKor24} adapted to work with Definition \ref{BVSP} rather than a spectral ODE in one dimension.
 \begin{lem}\label{lem:X}  Let $\Omega$ be a neighborhood of the origin. Define
     \begin{align}\label{hypo:space}
       X:=\{ w \in L^2(\Omega)\mid L w = 0 \text{ in the distributional sense on }\Omega
        \}
     \end{align}
      Then $X$  is a closed subspace of $L^2(\Omega)$.
     \end{lem}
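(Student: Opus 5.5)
The plan is to show that $X$ is the kernel of a continuous linear map from $L^2(\Omega)$ into the space of distributions $\D'(\Omega)$. Linearity is immediate: if $w_1,w_2\in X$ and $a,b\in\mathbb{C}$, then for every test function $\phi\in C^\infty_0(\Omega)$ we have $(aw_1+bw_2,L^*\phi)=a(w_1,L^*\phi)+b(w_2,L^*\phi)=0$. Here $L^*$ denotes the formal adjoint of $L=L_1+g(x)L_2$. Since $L_1$, $g$ and the coefficients of $L_2$ are smooth, $L^*$ maps $C^\infty_0(\Omega)$ into $C^\infty_0(\Omega)\subset L^2(\Omega)$. This is the only structural fact about $L$ needed; hypoellipticity plays no role in this lemma.

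For closedness, take a sequence $w_k\in X$ with $w_k\to w$ in $L^2(\Omega)$, and fix $\phi\in C^\infty_0(\Omega)$. By Cauchy--Schwarz,
\begin{align*}
  |(w,L^*\phi)| = |(w-w_k,L^*\phi)| \le \norm{w-w_k}_{L^2(\Omega)}\norm{L^*\phi}_{L^2(\Omega)} \to 0,
\end{align*}
using $(w_k,L^*\phi)=0$ for every $k$. Hence $(w,L^*\phi)=0$ for all test functions $\phi$, that is, $Lw=0$ distributionally on $\Omega$, so $w\in X$. Equivalently, $X=\bigcap_{\phi}\ker \ell_\phi$, where each $\ell_\phi(w)=(w,L^*\phi)$ is a bounded linear functional on $L^2(\Omega)$. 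An intersection of closed subspaces is a closed subspace.

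There is no real obstacle here. The only point needing care is that the distributional pairing is an honest $L^2$ inner product against the fixed function $L^*\phi$. This holds because $L^*\phi$ is smooth and compactly supported; the lower order terms and the divergence form of $L_2$ in \eqref{L2} only produce further smooth coefficients after integration by parts. In the later use (Propositions 15--16 of \cite{AkhKor24}), closedness makes $X$ a Hilbert space, so the closed graph theorem can then be applied to the inclusion of $X$ into $C^\infty(\Omega')$ given by hypoellipticity.
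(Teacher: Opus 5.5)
Your proof is correct and follows the same approach as the paper: pass to the limit in $(w_k,L^*\phi)=0$ using $L^2$ convergence against the fixed test function $L^*\phi$. You also check linearity and justify the Cauchy--Schwarz step explicitly; the paper leaves both implicit, so yours is slightly more complete.
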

     \begin{proof}
        Let $w_n \to w \in L^2(\Omega)$ with $L w_n=0$ distributionally. Then
  \begin{align*}
    (Lw,\phi) = (w, L^*\phi) = \lim_n (w_n, L^*\phi)=0 \text{ for }\phi \in C^\infty_0(\Omega)
  \end{align*}
  Thus $Lw=0$ distributionally and $w\in X$.
     \end{proof}

     \begin{lem}\label{closed-graph}[Closed Graph] Let $s> 0$.
     Let $\Omega$ be a neighborhood of the origin.   Suppose the operator $L$ from \eqref{Lgeneral} is locally hypoelliptic at $(0,0)$ and let $\Omega'\Subset\Omega$ be as in the Definition \ref{def-hypo}.
 Define the operator $T$ by\begin{align*}
      \hspace{-20pt}  T:X \to H^s(\Omega') ,\quad Tw = w
      \end{align*}
     Then $T$ has a closed graph. In particular, it is bounded, so there exists a constant $\tld C=\tld C(\Omega,\Omega',r,r')$
     \begin{align}
       \label{closed-graph:est} \norm{w}_{H^s(\Omega')}\le \tld C \norm{w}_{L^2(\Omega)} \text{ for } w \in X
     \end{align}
  \end{lem}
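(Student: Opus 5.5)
The plan is to apply the closed graph theorem to $T$, viewed as a map between Banach spaces, and then obtain the bound \eqref{closed-graph:est} from the bounded inverse / closed graph theorem.

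First, $T$ is well defined, and this is where hypoellipticity is used. Take $w\in X$. Then $Lw=0$ distributionally in $\Omega$, and $f\equiv 0$ is smooth in $\Omega$. By Definition \ref{def-hypo}, the neighborhood $\Omega'\Subset\Omega$ is chosen for $L$, $\Omega$ and $f=0$, and does not depend on the particular solution. Hence $w\in C^\infty(\Omega')$. To conclude $w\in H^s(\Omega')$ we need finite Sobolev norms up to the boundary of $\Omega'$. For this I will apply the definition on a slightly larger intermediate set: fix $\Omega'\Subset\Omega''\Subset\Omega$ with $w$ smooth on $\Omega''$, so $w|_{\Omega'}$ has bounded derivatives of all orders on $\overline{\Omega'}$. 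Equivalently, shrink $\Omega'$ once at the start, which is harmless because the statement only needs some $\Omega'$ of this kind. Then $w\in H^s(\Omega')$ for every $s>0$. Linearity of $T$ is clear.

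Second, both spaces are Banach spaces. $X$ is closed in $L^2(\Omega)$ by Lemma \ref{lem:X}, hence complete, and $H^s(\Omega')$ is a Hilbert space.

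Third, the graph is closed. Let $w_n\to w$ in $X$ (so in $L^2(\Omega)$) and $Tw_n=w_n\to h$ in $H^s(\Omega')$. Convergence in $H^s(\Omega')$ with $s>0$ implies convergence in $L^2(\Omega')$. Restricting the first convergence to $\Omega'$ gives $w_n\to w$ in $L^2(\Omega')$. By uniqueness of $L^2$ limits, $h=w|_{\Omega'}=Tw$.

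By the closed graph theorem $T$ is bounded, which is exactly \eqref{closed-graph:est}. The constant depends only on $\Omega,\Omega'$ (and hence on the box parameters $r,r'$ defining them).

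The only delicate point is the first step, namely making sure $w\in H^s(\Omega')$ and not merely $C^\infty(\Omega')$. I will handle it with the nested-neighborhood adjustment described above, using that $\Omega'$ is independent of $w$.
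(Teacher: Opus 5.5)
Your proposal is correct and follows the paper's proof: $X$ is complete by Lemma \ref{lem:X}, $T$ is well defined by hypoellipticity with $f=0$, the graph is shown closed, and the closed graph theorem gives boundedness. The differences are minor. The paper identifies the two limits by passing to an a.e.\ convergent subsequence, whereas you use uniqueness of $L^2(\Omega')$ limits. Your nested choice $\Omega'\Subset\Omega''$ also makes explicit a point the paper glosses over, namely that smoothness on $\Omega'$ alone does not give finite $H^s(\Omega')$ norms.
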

  \begin{proof}
    By Lemma \ref{lem:X} $X$ is Banach space. Hypoellipticity guarantees that $T$ is well-defined. \\

    To show that the operator $T$ has a closed graph, consider a sequence $u_n \to u \in X$ and $Tu_n\to v \in H^s(\Omega')$. Passing to a subsequence, $u_n(x,y) \to u(x,y)$ and $u_n(x,y) \to v(x,y)$ for almost every $(x,y)\in \Omega'$. Hence $u=v$ almost everywhere in $\Omega'$, and thus $u\in H^s(\Omega')$. Moreover, since $u\in X$, we have $Tu=u=v$. This shows that the graph of $T$ is closed.
 By the closed graph theorem,   c.f. \cite{YosFunct} p.80, $T$ is continuous, and hence satisfies \eqref{closed-graph:est}.
  \end{proof}
\begin{prop}\label{prop:Sob}
 Suppose $s>\frac{m+n}{2}+1$ and $\{0\}\times K_y\Subset     \Omega$. Then for $w\in H^s(\Omega)$ there holds
  \begin{align}\label{Sob-high}
    \norm{w(0,\cdot)}_{H^1(K_y)}\le \tld C(s_1,s_2,K_y,\Omega)\norm{w}_{H^s(\Omega)}
  \end{align}
\end{prop}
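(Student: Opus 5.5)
This is a trace (restriction) theorem. The natural tool is the Fourier transform in all variables. First I reduce to compact support with a cutoff, then I bound the trace of $w$ and of $\partial_y w$ on the slice $\{x=0\}$.

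\emph{Step 1: cutoff.} Since $\{0\}\times K_y$ is compact and contained in the open set $\Omega$, there is $\delta>0$ such that $\overline{Q_\delta}\times K_y^\delta\subset\Omega$, where $K_y^\delta$ is the closed $\delta$-neighborhood of $K_y$. Choose $\chi\in C_0^\infty(\Omega)$ with $\chi\equiv 1$ on a neighborhood of $\{0\}\times K_y$. Multiplication by a smooth compactly supported function is bounded on $H^s$, so $\|\chi w\|_{H^s(\R^{n+m})}\le C(\chi,s)\|w\|_{H^s(\Omega)}$. Here $\|w\|_{H^s(\Omega)}$ is understood as the restriction norm, i.e.\ the infimum over extensions; for a neighborhood we may also assume a Lipschitz domain with an extension operator. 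Moreover $(\chi w)(0,y)=w(0,y)$ for $y\in K_y$. So it suffices to treat $W:=\chi w\in H^s(\R^{n+m})$.

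\emph{Step 2: trace estimate.} Write $\hat W(\eta,\xi)$ for the Fourier transform in $(x,y)$. By Fourier inversion in $x$,
\begin{align*}
\widehat{W(0,\cdot)}(\xi)=(2\pi)^{-n}\int_{\R^n}\hat W(\eta,\xi)\,d\eta .
\end{align*}
This formula is rigorous first for Schwartz functions and then by density once the bound below is established. By Cauchy--Schwarz,
\begin{align*}
|\widehat{W(0,\cdot)}(\xi)|^2\le \Big(\int_{\R^n}(1+|\eta|^2+|\xi|^2)^{-s}d\eta\Big)\int_{\R^n}(1+|\eta|^2+|\xi|^2)^{s}|\hat W(\eta,\xi)|^2d\eta .
\end{align*}
Substituting $\eta=(1+|\xi|^2)^{1/2}\zeta$, the first factor equals $C_{n,s}(1+|\xi|^2)^{\frac n2-s}$, which is finite because $s>\frac n2$. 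Multiplying by $(1+|\xi|^2)$ and integrating in $\xi$ gives
\begin{align*}
\|W(0,\cdot)\|^2_{H^1(\R^m)}\le C_{n,s}\int(1+|\xi|^2)^{1+\frac n2-s}\int(1+|\eta|^2+|\xi|^2)^s|\hat W|^2\,d\eta\,d\xi .
\end{align*}
Whenever $s\ge 1+\frac n2$ the weight $(1+|\xi|^2)^{1+\frac n2-s}$ is at most $1$, so the right side is bounded by $C\|W\|_{H^s}^2$. The stated hypothesis $s>\frac{m+n}{2}+1$ is stronger than needed; it presumably anticipates continuity of the trace, since in that range the trace even lies in $C^1$ by Sobolev embedding. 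Restricting from $\R^m$ to $K_y$ only decreases the norm.

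\emph{Step 3: combine.} Putting the two steps together gives \eqref{Sob-high}. The constant depends on $s$, on $K_y$, and on $\Omega$ through $\chi$, which matches the stated dependence of $\tld C$ on $K_y$ and $\Omega$. I expect the only delicate point to be making the trace meaningful for a general $H^s(\Omega)$ function, as opposed to a smooth one. I would handle this by density of $C^\infty$ functions in $H^s(\R^{n+m})$ together with the uniform bound from Step 2. In the paper's application $w$ is smooth anyway, by hypoellipticity.
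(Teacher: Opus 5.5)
Your proof is correct, but it takes a different route from the paper. The paper's argument is a one-liner. It invokes the full Sobolev embedding $H^s\hookrightarrow C^1$ in all $n+m$ variables, which is exactly where the hypothesis $s>\frac{m+n}{2}+1$ enters. It evaluates the resulting bound $\norm{w(x,\cdot)}_{C^1(K_y)}\le \tld C\norm{w}_{H^s(\Omega)}$ at $x=0$ and then integrates $|w(0,y)|^2+|\nabla_y w(0,y)|^2$ over the bounded set $K_y$.

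You instead localize with an explicit cutoff and prove a Fourier-side trace estimate that integrates out only the frequencies $\eta$ dual to $x$. This costs only $\frac n2$ derivatives, so you get the bound for $s\ge \frac n2+1$, independently of $m$, without ever using pointwise control in $y$. In effect you prove in one stroke, and at the sharp threshold, the lower-regularity version that the paper treats separately in Proposition \ref{prop:Sob:part} and Remark \ref{rem:Sobolev}. There the paper iterates a one-dimensional trace lemma coordinate by coordinate, which yields $H^{s_2}$ only for $s_2<s-\frac n2$.

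The paper's route buys brevity and a continuous representative of $w$, so $w(0,\cdot)$ is defined pointwise without a density argument. Yours buys the weaker hypothesis and a self-contained proof. The price is that you must define the trace by density, which, as you note, is harmless in the application because hypoellipticity makes $w$ smooth on $\Omega'$.
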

\begin{proof}
By Sobolev embedding
  \begin{align*}
   \norm{w(x,\cdot)}_{C^1(K_y)}\le \tld C(s_1,s_2,K_y,\Omega)\norm{w}_{H^s(\Omega)}
  \end{align*}
  setting $x=0$ and integrating in $y$ implies \eqref{Sob-high}.
\end{proof}
Proposition \ref{prop:Sob} can modified to $s>\frac{n}{2}$, if one wishes to lower the regularity of the arguments needed to prove Theorem \ref{thm:main-general}.
\begin{prop}\label{prop:Sob:part}[Sobolev Embedding nD]
	Let $n\ge 1$, $s>\frac{n}{2}$ and let $\Omega=Q_r\times Q\subset \R^n_{x}\times \R^m_y$ be a neighborhood of the origin. Suppose that $w(x,y)\in H^{s}(\Omega)$. Let $\Omega'\Subset \Omega$ and
	consider a neighborhood $\Omega'':=Q_{r''}\times Q''$ small enough, so that $\Omega''\Subset \Omega'$.\\
Let $s_1$ be a number satisfying $\frac{n}{2}<s_1<s$ and define $s_2:=s-s_1> 0$, so that $s=s_1+s_2$. Then $x\mapsto w(x,\cdot)$ is a continuous uniformly bounded function from $Q_r\to H^{s_2}_y(Q'')$ with
	\begin{align}
		\label{hypo:est} \norm{w(0,\cdot)}_{H^{s_2}(\Omega'')}\le \tld C(s_1,s_2,\Omega,\Omega',\Omega'')\norm{w}_{H^s(\Omega)}
	\end{align}
\end{prop}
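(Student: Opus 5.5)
The plan is to use the mixed-regularity (anisotropic) Sobolev structure of $H^s(\Omega)$ on the product $\Omega=Q_r\times Q$ and trade $s_1>\frac n2$ derivatives in $x$ for continuity of the trace at $x=0$. The remaining $s_2$ derivatives stay in $y$. Since $\Omega''\Subset\Omega'\Subset\Omega$, I first localize. I fix a cutoff $\chi\in C^\infty_0(\Omega')$ with $\chi\equiv1$ on $\Omega''$, and replace $w$ by $\chi w$, regarded as a function on $\R^{n+m}$. Multiplication by $\chi$ is bounded on $H^s$, so $\norm{\chi w}_{H^s(\R^{n+m})}\le C(\chi,s)\norm{w}_{H^s(\Omega)}$. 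For noninteger $s$ I would instead use a bounded extension operator from $H^s(\Omega)$ (a box is Lipschitz) before cutting off.

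Next I work on the Fourier side, with $(\eta,\xi)$ dual to $(x,y)$. The elementary inequality
\begin{align*}
  (1+|\eta|^2+|\xi|^2)^{s}\ \ge\ (1+|\eta|^2)^{s_1}(1+|\xi|^2)^{s_2},
\end{align*}
which follows from $1+|\eta|^2+|\xi|^2\ge\max(1+|\eta|^2,1+|\xi|^2)$ together with $s=s_1+s_2$, shows that $H^s(\R^{n+m})$ embeds continuously into $H^{s_1}\big(\R^n_x;H^{s_2}(\R^m_y)\big)$. The latter space is the Hilbert-valued Sobolev space, defined by the weight $\jap{\eta}^{s_1}$ acting on the $H^{s_2}_y$-valued partial Fourier transform in $x$.

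I then apply the vector-valued Sobolev embedding $H^{s_1}(\R^n;\mathcal H)\hookrightarrow C_b(\R^n;\mathcal H)$ for $s_1>\frac n2$ and a Hilbert space $\mathcal H=H^{s_2}(\R^m)$. The proof is the scalar one. One writes $w(x,\cdot)=(2\pi)^{-n}\int e^{ix\eta}\hat w(\eta,\cdot)\,d\eta$ and uses Cauchy--Schwarz with $\int\jap{\eta}^{-2s_1}d\eta<\infty$, which gives the uniform bound in $x$. Continuity in $x$ follows by dominated convergence, using Bochner integrals in $\mathcal H$. Evaluating at $x=0$ and restricting to $Q''$, where $\chi w=w$, yields \eqref{hypo:est}. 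The continuity and uniform boundedness of $x\mapsto w(x,\cdot)$ into $H^{s_2}_y(Q'')$ hold on $Q_{r''}$ by the same argument. Covering compact subsets of $Q_r$ by translated cutoffs extends this to all of $Q_r$.

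The main technical point I expect is the meaning of $H^{s}(\Omega)$ and of its restriction for fractional $s$. I would take $H^s(\Omega)$ to be the restriction space with the quotient norm, which makes the extension step automatic. I would also check that the trace $w(0,\cdot)$ agrees with the pointwise values when $w$ is smooth, and then conclude by density. Everything else is routine.
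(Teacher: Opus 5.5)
Your argument is correct, but it takes a different route from the paper.

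The paper first proves the whole-space estimate $\norm{u(0,\cdot)}_{H^{s_2}(\R^m)}\le C\norm{u}_{H^{s_1+s_2}(\R^{n+m})}$ by iterating the one-dimensional trace lemma (Proposition \ref{Sobolev-1D}, quoted from earlier work) $n$ times, one coordinate $x_j$ at a time. This requires first splitting $s_1=\sum_{j=1}^n s_{1,j}$ with each $s_{1,j}>\frac12$. It then localizes with a product cutoff $\psi_x(x)\psi_y(y)$, supported in $\Omega'$ and equal to $1$ on $\Omega''$, much as you do.

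You instead treat all $n$ directions at once. You use the weight inequality $(1+|\eta|^2+|\xi|^2)^{s}\ge(1+|\eta|^2)^{s_1}(1+|\xi|^2)^{s_2}$ and a single Cauchy--Schwarz step with $\int\jap{\eta}^{-2s_1}\,d\eta<\infty$. This needs neither the splitting of $s_1$ nor the 1D lemma.

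The two routes buy different things:
\begin{itemize}
\item The paper's route reuses an existing result and avoids Hilbert-valued Sobolev spaces.
\item Your route is self-contained and more careful about fractional $s$ and the meaning of the trace.
\item Your route also actually yields the continuity of $x\mapsto w(x,\cdot)$ claimed in the statement. The paper's iteration is written only for the trace at $x=0$ and leaves that continuity implicit.
\end{itemize}

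One small correction. Covering compact subsets of $Q_r$ by translated cutoffs only gives bounds that are locally uniform in $x$. To get the bound uniformly on all of $Q_r$, apply your embedding directly to the extension $Ew\in H^s(\R^{n+m})$, with no cutoff in $x$. Since $Ew=w$ on $Q_r\times Q''$, this gives $\sup_{x\in Q_r}\norm{w(x,\cdot)}_{H^{s_2}(Q'')}\le C\norm{w}_{H^s(\Omega)}$.
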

We defer the proof to Appendix \ref{app:Sob}.
     \subsection{Projections and solutions}
       Recall that to invoke Proposition \ref{prop:PDE:full} and construct a solution of \eqref{PDE-solution}, \eqref{spectral:assumption} has to be satisfied. This assumption is too strong for $L^2$ functions and even $C^\infty_0$. However, the results above are relevant for $\lambda\les \log^{2p} \jap{\xi}$ as we discussed at the start of Section \ref{sec:gen:low}.
       \begin{lem}\label{lem:proj}
         Let the operator $P_j$ be the spectral projection for the operator $L_2$ defined in \eqref{P-j-def} in Appendix \ref{appendix-spectral}. Then \eqref{spectral:assumption} holds for $P_ju$, whenever $u\in L^2_y$. In fact,
         \begin{align}\label{Pj-exp}
           \norm{ e^{\eps B^{\frac{1}{2p}}} P_ju(y) }_{L^2_y}^2\le 6 e^{\eps {e^{{\frac{j+1}{2p}}}}}\norm{P_ju}^2
         \end{align}
       \end{lem}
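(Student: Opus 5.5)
The estimate \eqref{Pj-exp} should follow directly from the functional calculus for the self-adjoint operator $B$ (the Friedrichs extension of $L_2$), using the definition \eqref{P-j-def} of $P_j$ in Appendix \ref{appendix-spectral}. I do not have the text of \eqref{P-j-def} here. The form of the right-hand side of \eqref{Pj-exp} suggests that $P_j$ localizes the spectral parameter to $\lambda\le e^{j+1}$, for instance $P_j=\int_{[e^{j},e^{j+1})}dE_\lambda$, or a smoothed version of this with a bounded multiplier. The plan below is written under that assumption.

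First, for $u\in L^2_y$, I will identify the spectral measure of $P_ju$. If $P_j=\chi_j(B)$ with $\chi_j$ supported in $[1,e^{j+1}]$, then by the multiplicativity of the functional calculus
\begin{align*}
d\norm{E_\lambda P_j u}^2=|\chi_j(\lambda)|^2\, d\norm{E_\lambda u}^2 .
\end{align*}
Hence this measure is supported in $\{\lambda\le e^{j+1}\}$. Here we use $\lambda\ge 1$, which is justified by Lemma \ref{lem:lambda-one}.

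Next I apply the spectral theorem to the Borel function $f(\lambda)=e^{\eps\lambda^{\frac{1}{2p}}}$. Since $f$ is bounded on the support of the spectral measure of $P_ju$, it follows that $P_ju\in D(f(B))$, and
\begin{align*}
\norm{e^{\eps B^{\frac{1}{2p}}}P_ju}^2=\int_{[1,e^{j+1}]} e^{2\eps\lambda^{\frac{1}{2p}}}\,d\norm{E_\lambda P_ju}^2\le \sup_{\lambda\le e^{j+1}} e^{2\eps\lambda^{\frac{1}{2p}}}\,\norm{P_ju}^2 .
\end{align*}
Because $\lambda\mapsto\lambda^{\frac{1}{2p}}$ is monotone, the supremum equals $e^{2\eps e^{\frac{j+1}{2p}}}$. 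In particular \eqref{spectral:assumption} holds for $P_ju$.

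The main obstacle is matching the exact constants in \eqref{Pj-exp}, namely the prefactor $6$ and $\eps$ versus $2\eps$ in the exponent. These depend on the precise definition \eqref{P-j-def}.
\begin{itemize}
\item If $P_j$ is built from a smooth partition of unity in $\log\lambda$, the prefactor $6$ presumably bounds the overlap: one would write $P_ju$ against the neighbouring dyadic pieces, bound each multiplier by $1$, and count the pieces.
\item The factor $2$ in the exponent can be absorbed by running the argument with $\eps/2$ in place of $\eps$. Since $\eps>0$ is arbitrary throughout Section \ref{sec:gen:low}, this affects only the constants $r$ and $C(\eps,r)$ in Proposition \ref{prop:PDE:full}, not the conclusion.
\end{itemize}
Apart from this bookkeeping, the proof is a single application of the spectral theorem.
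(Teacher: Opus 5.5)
Your argument is correct, and it takes a more direct route than the paper. In the paper, $P_j=\psi_j(B)$ with $0\le\psi_j\le 1$ and $\supp\psi_j\subset[e^{j-1},e^{j+1}]$, so your assumption holds. Bounding $e^{2\eps\lambda^{\frac{1}{2p}}}$ on the support of the spectral measure of $P_ju$ gives $\norm{e^{\eps B^{\frac{1}{2p}}}P_ju}^2\le e^{2\eps e^{\frac{j+1}{2p}}}\norm{P_ju}^2$ with prefactor $1$.

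The paper instead applies the Littlewood--Paley comparison \eqref{LP-oper} of Proposition \ref{lem:LP-operator} to $P_ju$, with the nondecreasing function $f(\lambda)=e^{\eps\lambda^{\frac{1}{2p}}}$. It then uses $P_{j'}P_j=0$ for $|j'-j|>1$ and $\norm{P_{j'}P_ju}\le\norm{P_ju}$. The factor $2$ from \eqref{LP-oper} times the three surviving terms is exactly your guessed overlap count $6$. That route reuses the general machinery but is lossier. Carried out carefully, it gives $6e^{2\eps e^{\frac{j+2}{2p}}}$, because \eqref{LP-oper} evaluates $|f|^2$ at $e^{j'+1}$ and $j'$ reaches $j+1$.

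Your concern about $\eps$ versus $2\eps$ is justified: the $2\eps$ is genuinely needed. Take $p=1$ and an eigenfunction of $B$ with eigenvalue $\lambda_0\ge e^{j}$ and $\psi_j(\lambda_0)\neq 0$. Then the left side of \eqref{Pj-exp} is at least $e^{2\eps e^{j/2}}\norm{P_ju}^2$. This exceeds $6e^{\eps\sqrt{e}\,e^{j/2}}\norm{P_ju}^2$ for large $j$, since $2>\sqrt{e}$. So the displayed constant in \eqref{Pj-exp} is a slip. Your fix of rescaling $\eps$ is the right way to absorb it, and it only changes $r$ and the constants in Propositions \ref{prop:PDE:full} and \ref{lem:necess:Hs}.
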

       \begin{proof}
         Let $u\in L^2$ and define $u_j=P_ju$. By Proposition \ref{lem:LP-operator} in Appendix \ref{appendix-spectral}.
 \begin{align*}
    \norm{ e^{\eps B^{\frac{1}{2p}}} u_j(y) }_{L^2_y}^2\le 2\sum_{|j'-j|\le 1} e^{\eps e^{ {\frac{j'}{2p}}}} \norm{P_{j'} P_j u}^2
 \end{align*}
     Summing over all $j'\in\mathbb{N}$ concludes the proof.
     \end{proof}
      \begin{prop}\label{lem:necess:Hs}
Let $s>\frac{m+n}{2}+1$ and $j\in\N$. Suppose the spectral problem \eqref{Lspec} satisfies Definition \ref{BVSP} for $r_0>0$ and ${\frac{1}{2p}}>0$. Let $\eps>0$ and set $r$ as in Proposition \ref{prop:PDE:full}. Let $P_j$ be the spectral projection for the operator $L_2$ defined in \eqref{P-j-def}. Let $Q\subset \R^m_y$ be a cube containing the origin.\\
Suppose $L$ from  \eqref{Lgeneral} is {hypoelliptic near $0$}.  Then there exists a cube $Q'\Subset Q$ and a constant $\tld C=\tld C(\eps,s_1,s_2,p,Q)$, both independent of $j$, so that
       \begin{align}\label{Hs-tldL}
       \norm{P_j u}_{H^{1}(Q')}  \le \tld C  \exp(\eps e^{\frac{j}{2p}}) \norm{P_j u}_{L^2(Q)}
     \end{align}
     for all $u(y)\in L^2_y(Q)$ and all $j\ge 0$.
\end{prop}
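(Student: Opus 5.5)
The plan is to chain Proposition \ref{prop:PDE:full}, Lemma \ref{closed-graph} and Proposition \ref{prop:Sob}, with Lemma \ref{lem:proj} supplying the spectral hypothesis. Fix $\eps>0$ and let $r$ be as in Proposition \ref{prop:PDE:full}. I would work with a fixed smaller parameter, say $\eps/2$ in place of $\eps$ in the construction, so that the constants at the end come out as $\exp(\eps e^{j/2p})$. Set $\Omega:=Q_r\times Q$. Hypoellipticity (Definition \ref{def-hypo}, with $f=0$) gives a neighborhood $\Omega'\Subset\Omega$ depending only on $L$ and $\Omega$, hence independent of $j$. Inside $\Omega'$ I would pick a cube $Q'$ and a small box around $\{0\}\times \overline{Q'}$. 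This is the cube asserted in the statement.

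\textbf{Step 1: build a solution with prescribed trace.} Given $u\in L^2_y(Q)$, put $u_j:=P_ju$. By Lemma \ref{lem:proj}, $e^{\eps B^{1/2p}}u_j\in L^2$ with
\[
\norm{e^{\eps B^{1/2p}}u_j}\le \sqrt6\, \exp\bigl(\tfrac{\eps}{2} e^{\frac{j+1}{2p}}\bigr)\norm{u_j}.
\]
Proposition \ref{prop:PDE:full} then produces $w=v(x,B)u_j$. This $w$ is a distributional solution of $Lw=0$ on $Q_r\times Q$ with $w(0,\cdot)=u_j$, and by \eqref{w:est}
\[
\norm{w}_{L^2(\Omega)}\le C(\eps,r)\,\exp\bigl(\tfrac{\eps}{2}e^{\frac{j+1}{2p}}\bigr)\norm{u_j}_{L^2}.
\]
In particular $w\in X$ in the sense of \eqref{hypo:space}.

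\textbf{Step 2: upgrade regularity and take the trace.} Lemma \ref{closed-graph} with $s>\frac{m+n}{2}+1$ gives $\norm{w}_{H^s(\Omega')}\le\tld C\norm{w}_{L^2(\Omega)}$, where $\tld C$ depends only on $\Omega,\Omega'$ and hence not on $j$. Proposition \ref{prop:Sob}, applied on $\Omega'$ with $\{0\}\times Q'\Subset\Omega'$, gives
\[
\norm{u_j}_{H^1(Q')}=\norm{w(0,\cdot)}_{H^1(Q')}\le \tld C\norm{w}_{H^s(\Omega')}.
\]
Chaining the three estimates yields $\norm{P_ju}_{H^1(Q')}\le \tld C\,\exp\bigl(\tfrac{\eps}{2}e^{1/2p}e^{j/2p}\bigr)\norm{P_ju}_{L^2}$. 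The shift $j\to j+1$ costs a factor $e^{1/2p}$ in the exponent. I would absorb it by running the construction with $\eps e^{-1/2p}/2$ in place of $\eps$ from the start; this only changes $r$ and the constants, and they remain independent of $j$. Finally, $\norm{P_ju}_{L^2}$ should be read as the norm on $Q$, which is where the spectral theory of $B$ lives (Appendix \ref{appendix-spectral}).

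\textbf{Main obstacle.} The delicate point is the bookkeeping of domains, so that everything independent of $j$ really is. The constants in Lemma \ref{closed-graph}, the neighborhood $\Omega'$ and $r$ must all be fixed before $j$ is chosen. The trace $w(0,\cdot)$ must be meaningful, which is where continuity of $v$ and \eqref{criterion:LWB} enter. One must also check that $P_ju$, which is defined spectrally on all of $L^2_y$, has $w$ supported and solving the equation on the same $y$-region $Q$ where hypoellipticity is applied. If Proposition \ref{prop:PDE:full} is only stated on $Q_r\times K$, I would take $K=\overline Q$ and shrink $\Omega'$ accordingly.
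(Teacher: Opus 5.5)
Your proposal is correct and is essentially the paper's own argument. Lemma \ref{lem:proj} and Proposition \ref{prop:PDE:full} produce a solution $w_j$ of $Lw_j=0$ with $w_j(0,\cdot)=P_ju$. Then Proposition \ref{prop:Sob}, Lemma \ref{closed-graph}, \eqref{w:est} and \eqref{Pj-exp} are chained with $j$-independent constants. The one place you are more careful than the paper is absorbing the shift from $e^{\frac{j+1}{2p}}$ in \eqref{Pj-exp} to $e^{\frac{j}{2p}}$ by running the construction with a smaller multiple of $\eps$ (which changes only $r$ and the constants); the paper passes over this step silently.
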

\begin{proof}
  By Lemma \ref{lem:proj}, \eqref{spectral:assumption} is valid for $u_j=P_ju$ and hence hypothesis of Proposition \ref{prop:PDE:full} is satisfied for $u_j$. Proposition \ref{prop:PDE:full} implies that there exists $w_j$ that solves \eqref{PDE-solution} with
\begin{align}\label{projection:at0}
  w_j(0,y)=u_j(y)=P_ju(y)
\end{align}
Let $Q''$ be a cube sandwiched inside $Q'$ and $Q$, i.e. $Q'\Subset Q''\Subset Q$. We invoke Proposition \ref{prop:Sob}. By \eqref{Sob-high}
 \begin{align*}
   \norm{w_j(0,\cdot)}_{H^{1}(Q')} \le \tld C \norm{w_j}_{H^s(Q_{\frac{r}{2}}\times Q'')}
 \end{align*}
where the constant $\tld C$ is independent of $j$ and $u$. By Lemma \ref{closed-graph} we further estimate
\begin{align*}
  \norm{w_j}_{H^s(Q_{\frac{r}{2}}\times Q'')} \le \tld C\norm{w_j}_{L^2(Q_r\times \Omega)}
\end{align*}
Combining the last two estimates we deduce
\begin{align}
  \label{closed:Hs}
   \norm{P_ju}_{H^{1}(Q')} \le \tld C \norm{w_j}_{L^2(Q_r\times \Omega)}
\end{align}
It remains to estimate $w_j$ by its boundary data using Proposition \ref{prop:PDE:full}. More specifically, we apply \eqref{w:est} for $w=w_j$:
\begin{align*}
  \norm{w_j}_{L^2(Q_r\times Q)} \le C\norm{e^{\eps B^{\frac{1}{2p}}} u_j}_{L^2(Q)}
\end{align*}
Combining this estimate with \eqref{Pj-exp} and \eqref{closed:Hs} concludes the proof.
\end{proof}
\begin{rem}\label{rem:Sobolev}
  We can lower the regularity $H^s$ in Proposition \ref{lem:necess:Hs}, which comes from the Sobolev embedding in Proposition \ref{prop:Sob} to $s>\frac{n}{2}$ from Proposition \ref{prop:Sob:part}, with $s_2=s-s_1>0$ for $s>s_1>\frac{n}{2}$.
       \begin{align}\label{Hs-tldL-lower}
       \norm{P_j u}_{H^{s_2}(Q')}  \le \tld C  \exp(\eps e^{\frac{j}{2p}}) \norm{P_j u}_{L^2(Q)}
     \end{align}
   The only change in the proof of Proposition \ref{prop:Sob:part} is to replace the Sobolev estimate \eqref{Sob-high} with \eqref{hypo:est}.
\end{rem}

\section{Proof of Theorem \ref{thm:main-general}: high frequencies}\label{sec:high}

\begin{lem}[Interpolation] \label{lem:interp}
    Let $\eps>0$, $s_2>0$ and $p>0$. For each $\xi\in \R^n$ let $R=R(\xi)$ be defined by
    \begin{align}\label{R-xi}
      e^R = \left(\frac{s_2\log\jxi}{2\eps}\right)^{2p}.
    \end{align}
   Consider a sequence of functions $\alpha_j(y) \in H^{s_2}_y$ for $j=0, 1, 2 \ldots$ Then
    \begin{align}
      \label{low-j}& \left\Vert\sum_{j\le R(\xi)} \log^p\jxi \widehat{\alpha_j}(\xi)\right\Vert_{L^2_\xi}^2 \le C(\eps^{-1},p,s_2)\sum_{j=0}^\infty e^{-2\eps  e^{\frac{j}{2p}}}\norm{\alpha_j}_{H^{s_2}_y}^2\\
      \label{high-j}& \left\Vert\sum_{j\ge R(\xi)} \log^p\jxi \widehat{\alpha_j}(\xi)\right\Vert_{L^2_\xi}^2 \le C(s_2,p)\eps^{2p}\sum_{j=0}^\infty e^{j}\norm{\alpha_j}_{L^2_y}^2
    \end{align}
  \end{lem}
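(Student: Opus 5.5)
The plan is to prove both estimates pointwise in $\xi$ by Cauchy--Schwarz over $j$, and then integrate in $\xi$. The threshold \eqref{R-xi} is designed so that $\log^p\jxi$ can be traded against either the Sobolev weight $\jxi^{s_2}$ (when $j\le R(\xi)$) or the spectral weight $e^{j/2}$ (when $j\ge R(\xi)$).

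\textbf{Low frequencies \eqref{low-j}.} Fix $\xi$ and write
\[
\sum_{j\le R}\widehat{\alpha_j}(\xi)=\sum_{j\le R} e^{\eps e^{j/2p}}\cdot e^{-\eps e^{j/2p}}\widehat{\alpha_j}(\xi).
\]
For $j\le R$ the definition \eqref{R-xi} gives $e^{j/2p}\le e^{R/2p}=\frac{s_2\log\jxi}{2\eps}$, hence $e^{\eps e^{j/2p}}\le \jxi^{s_2/2}$. Cauchy--Schwarz in $j$ then yields
\[
\Big|\sum_{j\le R}\widehat{\alpha_j}\Big|^2\le \Big(\sum_{j\le R} e^{2\eps e^{j/2p}}\Big)\sum_j e^{-2\eps e^{j/2p}}|\widehat{\alpha_j}(\xi)|^2 .
\]
The first factor is a doubly exponential sum, so it is dominated, up to a constant depending on $p$, by its last term, giving roughly $C\jxi^{s_2}$. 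This domination is the delicate step. If it proves awkward, I would instead bound each term crudely by $\jxi^{s_2}$ and count terms: there are $R+1\lesssim \log\log\jxi$ of them. Either way I obtain the bound
\[
\log^{2p}\jxi\,(R+1)\,\jxi^{s_2}\sum_j e^{-2\eps e^{j/2p}}|\widehat{\alpha_j}|^2 .
\]
To close, I need $\log^{2p}\jxi\,(1+\log\log\jxi)\,\jxi^{s_2}\le C(\eps^{-1},p,s_2)\jxi^{2s_2}$, which holds since logarithms are dominated by $\jxi^{s_2}$. This trade is exactly why the exponent in \eqref{R-xi} is chosen with $s_2/2$ slack. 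Integrating in $\xi$ and using $\int\jxi^{2s_2}|\widehat{\alpha_j}|^2\,d\xi\simeq\norm{\alpha_j}_{H^{s_2}}^2$ then gives \eqref{low-j}. The constant depends on $\eps$ only through $R$, or through nothing at all, which is consistent with $C(\eps^{-1},p,s_2)$.

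\textbf{High frequencies \eqref{high-j}.} Fix $\xi$ and write
\[
\widehat{\alpha_j}=e^{-j/2}\cdot e^{j/2}\widehat{\alpha_j}.
\]
Cauchy--Schwarz gives
\[
\Big|\sum_{j\ge R}\widehat{\alpha_j}\Big|^2\le\Big(\sum_{j\ge R}e^{-j}\Big)\sum_j e^j|\widehat{\alpha_j}|^2\le C e^{-R}\sum_j e^j|\widehat{\alpha_j}|^2 .
\]
Next, \eqref{R-xi} gives $e^{-R}=\left(\frac{2\eps}{s_2\log\jxi}\right)^{2p}$, so that
\[
\log^{2p}\jxi\, e^{-R}\le C(s_2,p)\eps^{2p}.
\]
This is uniform in $\xi$. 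A minor check is needed when $R<0$, for small $\xi$: the sum then runs over all $j\ge0$, and the geometric bound $\sum_{j\ge \max(R,0)}e^{-j}\le Ce^{-R}$ still holds. Integrating in $\xi$ and applying Plancherel yields \eqref{high-j}.

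The main obstacle is the low-frequency bookkeeping: controlling the number or size of the terms with $j\le R(\xi)$ so that only a factor $\jxi^{s_2}$, times logarithms, is lost. Once that is done, the logarithms are absorbed into the remaining $\jxi^{s_2}$ of the $H^{s_2}$ norm.
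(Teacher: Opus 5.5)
Your proposal is correct and follows the paper's proof essentially step for step: pointwise Cauchy--Schwarz in $j$; bounding the low-frequency weight sum by the number of terms times $e^{2\eps e^{R/2p}}=\jxi^{s_2}$ and absorbing $(R+1)\log^{2p}\jxi$ into the spare factor $\jxi^{s_2}$; and a geometric tail $\sum_{j\ge R}e^{-j}\lesssim e^{-R}=\bigl(\tfrac{2\eps}{s_2\log\jxi}\bigr)^{2p}$ for the high part (your handling of $R<0$ and of the term count $R+1$ is tidier than the paper's). Drop the aside that $\sum_{j\le R}e^{2\eps e^{j/2p}}$ is bounded by its last term up to a constant depending only on $p$: for small $\eps$ roughly $2p\log\frac{1}{2\eps}$ of the early terms lie in $[1,e]$, so that constant must depend on $\eps$, but your fallback term count is exactly what the paper uses and suffices.
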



  \begin{proof}
  The argument is a modification of Lemma 21 from \cite{AkhKor24} that allows for $p\neq 1$.
   Note that \eqref{R-xi} is equivalent to
   \begin{align}\label{xi-R}
     \jxi^{s_2} = \exp(2\eps e^{\frac{R}{2p}}) \text{ and } R=2p(\log\log\jxi + \log s_2 - \log(2\eps))
   \end{align}
   By the Cauchy-Schwartz inequality
    \begin{align}\label{Cauchy-low}
      \sum_{j\le R} \log^p\jxi| \widehat{\alpha_j}(\xi)| \le \left(\sum_{j\le R} \frac{\log^{2p}\jxi}{\jxi^{2s_2}} e^{2\eps e^{\frac{j}{2p}}}\right)^{\frac{1}{2}}\cdot \left( \sum_{j\le R}\frac{\jxi^{2s_2}}{e^{2\eps e^{\frac{j}{2p}}}} |\widehat{\alpha_j}|^2\right)^{\frac{1}{2}}
    \end{align}
    The terms of the first sum are increasing in $j$, so the sum can be estimated
    \begin{align*}
      \sum_{0\le j\le R} \frac{\log^{2p}\jxi}{\jxi^{2s_2}} e^{2\eps e^{\frac{j}{2p}}} \le \frac{\log^{2p}\jxi}{\jxi^{2s_2}} R \cdot e^{2\eps e^{\frac{R}{2p}}}
    \end{align*}
    Hence by \eqref{xi-R}
    \begin{align*}
       \sum_{j\le R} \frac{\log^{2p}\jxi}{\jxi^{2s_2}} e^{2\eps e^{\frac{j}{2p}}} \le \frac{\log^{2p}\jxi}{\jxi^{s_2}} 2p(\log\log\jxi + \log s_2 - \log(2\eps)).
    \end{align*}
    Taking a supremum over all $\xi$ gives
    \begin{align*}
       \sum_{j\le R} \frac{\log^{2p}\jxi}{\jxi^{2s_2}} e^{2\eps e^{\frac{j}{2p}}} \le  C(\eps^{-1},p,s_2)
     \end{align*}
     Returning with this estimate to \eqref{Cauchy-low} and integrating in $\xi$ we get
    \begin{align*}
      \norm{\sum_{j\le R} \log^{p}\jxi \widehat{\alpha_j}(\xi)}_{L^2_\xi}^2 \le  C(\eps^{-1},p,s_2) \int \sum_{j\le R}\frac{\jxi^{2s_2}}{e^{2\eps e^{\frac{j}{2p}}}} |\widehat{\alpha_j}|^2 d\xi
    \end{align*}
  Observe
     \begin{align*}
      \sum_{j\le R}\frac{\jxi^{2s_2}}{e^{2\eps e^{\frac{j}{2p}}}}|\widehat{\alpha_j}|^2 \le  \sum_{j=0}^\infty\frac{\jxi^{2s_2}}{e^{2\eps e^{\frac{j}{2p}}}}|\widehat{\alpha_j}|^2.
     \end{align*}
Interchanging the sum and integration for $\int \sum_{j=0}^\infty\frac{\jxi^{2s_2}}{e^{2\eps e^{\frac{j}{2p}}}}|\widehat{\alpha_j}|^2 d\xi$ using Fubini concludes the proof of \eqref{low-j}.

  Similarly for \eqref{high-j}, by Cauchy-Schwartz
  \begin{align*}
    \left(\sum_{j\ge R} \log^{p}\jxi| \widehat{\alpha_j}(\xi)| \right)^2\le \left( \sum_{j\ge R} \frac{\log^{2p}\jxi}{e^j}\right) \cdot \left( \sum_{j\ge R} e^j |\widehat{\alpha_j}|^2\right)
  \end{align*}
  The first sum is a geometric series in $j$, so
  \begin{align*}
    \sum_{j\ge R} \frac{\log^{2p}\jxi}{e^j} = \frac{\log^{2p}\jxi}{e^R}\frac{1}{1-1/e}
  \end{align*}
  Using \eqref{R-xi} gives
  \begin{align*}
    \frac{\log^{2p}\jxi}{e^R}\frac{1}{1-1/e}\le \left(\frac{2\eps}{s_2}\right)^{2p}\cdot 3
  \end{align*}
We conclude that
 \begin{align}\label{interpolate:high-pt}
    \left(\sum_{j\ge R} \log^{p}\jxi| \widehat{\alpha_j}(\xi)| \right)^2\le C(p,s_2)\eps^{2p} \left( \sum_{j\ge R} e^j |\widehat{\alpha_j}|^2\right)
  \end{align}
 For $C(p,s_2)= (\frac{2}{s_2})^{2p}\cdot 3$. Now extend the sum $$\sum_{j\ge R} e^j|\widehat{\alpha_j}|^2\le \sum_{j\ge 0} e^j |\widehat{\alpha_j}|^2$$ and integrate \eqref{interpolate:high-pt} in $\xi$ to obtain
   \begin{align*}
    \norm{\sum_{j\ge R} \log^{p}\jxi \widehat{\alpha_j}}_{L^2}^2 \le C(p,s_2)\eps^{2p}\int \sum_{j=0}^\infty e^j|\hat \alpha_j|^2 d\xi.
  \end{align*}
 Applying Fubini concludes \eqref{high-j}.
  \end{proof}

  \subsection{Proof of Theorem \ref{thm:main-general}}\label{sec:gen:finish}
 \begin{proof}[Proof of Theorem \ref{thm:main-general}]
 Suppose the spectral problem \eqref{Lspec} satisfies Definition \ref{BVSP} for $r_0>0$ and $p>0$.
 Let $\eps'>0$ be given.
 Let $s_2=1$ and $s_1> \frac{m+n}{2}$. Let $C(s_2,p)$ be the constant from \eqref{high-j}. Define $\eps$ by
 \begin{align}\label{eps-eps}
 \eps^{2p}=\frac{\eps'}{C(s_2,p)e}
 \end{align}
 Set $r$ as in the Proposition \ref{prop:PDE:full}.

   Let $u(y)\in C^\infty_0(\Omega')$, and define $\alpha_j =  P_ju$. Since $P_j$'s are a resolution of identity by Proposition \ref{lem:LP-operator}, we can decompose
   \begin{align*}
   u=  \sum_{j\ge 0} P_j u= \sum_{j\ge 0} \alpha_j
   \end{align*}
  Let $R$ be defined by (\ref{R-xi}). By the triangle inequality $$ \norm{\log^{p}\jxi\hat u}_{L^2} \le  \norm{\log^{p}\jxi \sum_{j\le R}\hat \alpha_j}_{L^2}+  \norm{\log^{p}\jxi \sum_{j> R} \hat \alpha_j}_{L^2}$$
  By Cauchy-Schwartz
  \begin{align*}
    \norm{\log^{p}\jxi \hat u}_{L^2}^2\le 2\left(\norm{\log^{p}\jxi \sum_{j\le R} \hat\alpha_j }_{L^2}^2 + \norm{\log^{p}\jxi \sum_{j> R}\hat \alpha_j }_{L^2}^2\right)
  \end{align*}
  We are now set to use Lemma \ref{lem:interp} for our choice of $\alpha_j$:
  \begin{multline}\label{log-necess-split}
    \norm{\log^{p}\jxi \hat u}_{L^2}^2  \le C(s_2,p)\eps^{2p}\sum_{j=0}^\infty e^j\norm{ P_ju }_{L_2}^2 \\
    + C_\eps\sum_{j=0}^\infty e^{-2\eps e^{\frac{j}{2p}}}\norm{ P_j u}_{H^{s_2}}^2 := I+ II
  \end{multline}

  We estimate $I$ and $II$ separately. For $I$ we estimate the first sum by \eqref{LP-oper} in Proposition \ref{lem:LP-operator} from Appendix \ref{appendix-spectral} for $f(\lambda)=\sqrt{\lambda}$. Since $f(\lambda)>0$ and increasing for $\lambda\ge 1$
\begin{align*}
 I= C(s_2,p)\eps^{2p}\sum_{j=0}^\infty e\cdot |\sqrt{e^{j-1}}|^2\norm{P_ju }_{L_2}^2\\
  \le C(s_2,p)e\eps^{2p}\norm{\sqrt{B} u}^2
\end{align*}
By \eqref{eps-eps} we observe
\begin{align*}
  I\le \eps' \norm{\sqrt{B} u}^2.
\end{align*}
Using $(Bu,u) = (L_2 u, u)$ and the fact that $B$ is an extension of the operator $L_2$ and $u\in C^\infty_0$ we obtain
\begin{align*}
  I\le \eps' (L_2u,u)
\end{align*}

For $II$ we observe from Proposition \ref{lem:necess:Hs}
\begin{align*}
  \norm{P_j u}_{H^{s_2}}^2 \le C(\eps,s,p) e^{2\eps e^{\frac{j}{2p}}}\norm{P_j u}_{L^2}^2
\end{align*}
Hence by (\ref{log-necess-split}) and Proposition \ref{lem:LP-operator}
\begin{align*}
  II \le C(\eps,s,p) \sum_j \norm{ P_j u}_{L^2}^2 \le C_\eps \norm{u}^2
\end{align*}
Combining $I$ and $II$ into \eqref{log-necess-split} we obtain
\begin{align*}
  \norm{\log^{p}\jxi u}_{L^2(Q')}^2 \le \eps' (L_2 u, u) + C_{\eps'} \norm{u}^2_{L^2(\Omega)}\hspace{20pt}\qedhere
\end{align*}
 \end{proof}

\appendix
 \section{Spectral projections}\label{appendix-spectral}
      We state relevant spectral analysis in this appendix. We follow \cite{AkhKor24} closely.\\

      By \eqref{L2}, \eqref{elliptic2}, $L_2$ is an operator bounded below. That is given $\Omega \Subset \R^n$ there is a constant $c_\Omega\in \R$ so that
 \begin{align}\label{global-lower-bound}
 	(L_2 u, u) \ge c_{\Omega}\norm{u}^2 \text{ for all } u\in C_0^\infty(\Omega)
 \end{align}
 \begin{lem}\label{lem:lambda-one}
Without loss of generality we may assume that $L_2$ is a positive operator:
 	\begin{align}\label{L2:positive}
 		(L_2u,u) \ge \norm{u}^2
 	\end{align}
 \end{lem}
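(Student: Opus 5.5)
The plan is to reduce to a positive operator by a shift that changes neither hypoellipticity nor the form of the target estimate. By \eqref{global-lower-bound} there is a constant $c_\Omega$ with $(L_2u,u)\ge c_\Omega\norm{u}^2$ for $u\in C_0^\infty(\Omega)$. I would set $M:=1+|c_\Omega|$ and $\tld L_2:=L_2+M$, so that $(\tld L_2u,u)\ge \norm{u}^2$ on $C^\infty_0(\Omega)$. The operator $\tld L_2$ is again of the form \eqref{L2}, with $b_0$ replaced by $b_0+M$, and it remains symmetric. So \eqref{L2:positive} holds for $\tld L_2$, and what remains is to check that every hypothesis and conclusion used in the paper transfers between $L_2$ and $\tld L_2$.

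The first step is the operator. I would write
\[
L=L_1+g(x)L_2=\bigl(L_1-Mg(x)\bigr)+g(x)\tld L_2 .
\]
Here $\tld L_1:=L_1-Mg(x)$ is again of the form \eqref{L1}: only the zeroth order coefficient changes, $a_0\mapsto a_0-Mg$, and it stays smooth and bounded. The full operator $L$ is literally unchanged, so local hypoellipticity in the sense of Definition \ref{def-hypo} is unaffected.

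The second step is the spectral profile. A function $v(x,\lambda)$ solving $L_1v+g\lambda v=0$ satisfies $\tld L_1 v+g(\lambda+M)v=0$. Relabeling $\tld\lambda=\lambda+M$, the function $\tld v(x,\tld\lambda):=v(x,\tld\lambda-M)$ solves \eqref{Lspec} for $\tld L_1$ and is point normalized. Its bound becomes $C_re^{c_0r(\tld\lambda-M)^{1/2p}}\le C_re^{c_0r\tld\lambda^{1/2p}}$, which is \eqref{criterion:UB}. This construction is only defined for $\tld\lambda\ge 1+M$. That is harmless, because the spectrum of the Friedrichs extension of $\tld L_2$ lies in $[1,\infty)$, but it does require $v$ for all $\lambda\ge1$ only up to $\tld\lambda\ge1+M$. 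I would handle this in one of two ways: either use the fact that the spectral measure of $\tld B$ then lives on $[1+M,\infty)$, or, in the concrete applications (Sections \ref{sec:unif} and \ref{sec:p+}), rerun the barrier constructions for $\tld L_1$, which only used $\lambda\ge1$ through bounds on $a_0$ that now absorb $M\norm{g}_{L^\infty}$.

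The third step, which is the main point to verify, is that the conclusion transfers back. Suppose
\[
\norm{\log^p\jxi\,\hat u}^2\le\eps'\,\mathrm{Re}(\tld L_2u,u)+C\norm{u}^2 .
\]
Then
\[
\norm{\log^p\jxi\,\hat u}^2\le\eps'\,\mathrm{Re}(L_2u,u)+(C+\eps' M)\norm{u}^2 ,
\]
which is the same superlogarithmic estimate \eqref{superlog} for $L_2$ with a modified constant $C_{\eps',K}$. Hence no generality is lost. The only subtle bookkeeping is the dependence of $c_\Omega$, and thus $M$, on the compact set $K$. I would fix a neighborhood $\Omega$ of $y=0$ once at the start and take $K\subset\Omega$, which matches the "small enough open neighborhood" in Theorem \ref{thm:main-general}.
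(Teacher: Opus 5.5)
Your core argument is the paper's. The paper replaces $L_2$ by $L_2-c_\Omega+1$ and $L_1$ by $L_1+g(x)(c_\Omega-1)$ and notes that $L$ itself is unchanged, which is your shift with $M=1-c_\Omega$. Your check that the estimate for $L_2+M$ yields the one for $L_2$ with constant $C+\eps' M$ is correct; the paper leaves it implicit.

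One claim in your second step is wrong, however. The spectral measure of $\tilde B=B+M$ is supported in $[\inf\sigma(B)+M,\infty)$. This lies in $[1+M,\infty)$ only if $B\ge 1$ already, which is exactly the case where no shift is needed. For instance, with your $M=1+|c_\Omega|$ and $c_\Omega<0$, it can reach down to $1$.

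So an abstract spectral profile $v(x,\lambda)$ given only for $\lambda\ge 1$ does not transfer. The function $v(x,\tilde\lambda-M)$ would need $v$ at $\lambda\in[1-M,1)$, possibly negative, and Definition~\ref{BVSP} does not supply this. Your remark that the restriction is ``harmless because the spectrum lies in $[1,\infty)$'' therefore has it backwards.

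Your second option is the right one, and it is how the paper actually uses the lemma. The reduction is made at the outset in Section~\ref{sec:main}, before Definition~\ref{BVSP}. So the profile hypothesis of Theorem~\ref{thm:main-general} refers to the shifted operator $L_1-Mg$. The constructions of Sections~\ref{sec:unif} and \ref{sec:p+} apply to it directly, because only the bounded zeroth-order coefficient changes, to $a_0-Mg$, which alters only the constants. Drop the first option and the argument is complete.
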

\begin{proof}
   	By replacing $L_2$ with $L_2-c_{\Omega}+1$ and $L_1$ with $L_1+g(x)(c_{\Omega}-1)$, \eqref{Lgeneral} remains the same and \eqref{global-lower-bound} holds for $c\ge 1$.
\end{proof}
Let $Q$ be an open cube containing the origin in $\R^n_y$. We extend the domain of $L_2$ in \eqref{L2} from $C^\infty_0(Q)$ to a unique maximal domain inside $L^2(Q)$ to make it self adjoint. More precisely, let $B$ be the Friedreich extension of $L_2$ and $E_B(\lambda):L^2(Q)\to L^2(Q)$ be the spectral projection associated to $B$ (c.f. Theorem 2 on p. 317 of \cite{YosFunct}).
                Spectral measure allows us define functions of the operator $f(B)$                 for any measurable function $f$ as follows:
\begin{align}\label{spectral-function}
f(B)u = \int_{-\infty}^\infty f(\lambda) dE_B(\lambda) u
\end{align}
with the domain $$\dom f(B) = \{ u\in L^2(Q): \norm{f(B)u}^2:= \int_{-\infty}^\infty |f(\lambda)|^2 d ||E_B(\lambda) u||^2 <\infty \}.$$

For the analysis below it is convenient to localize solutions to specific frequencies. Littlewood-Paley decomposition is ideal for that. Let $\Phi$ be a fixed cut-off function satisfying
\begin{align}\label{Phi}
  \Phi(\lambda)\in C^\infty_0([-e,e]), \,\, \Phi(\lambda)\equiv 1 \text{ for }|\lambda|\le 1 \text{ and } 0\le \Phi \le 1.
\end{align}
Moreover, we assume that $\Phi$ is even and nonincreasing in $[0,e]$. Now, for any integer $j$ let
 \begin{align*}
   \Phi_j(\lambda)\equiv \Phi(\lambda\cdot e^{-j}).
 \end{align*}
Then $\Phi_j(\lambda)=1$ for $|\lambda|\le e^j$ and $\Phi_j(\lambda)=0$ for $|\lambda|\geq e^{j+1}$. We further define cut-offs localized to $|\lambda|\approx e^j$ as follows:
      \begin{align}\label{psi-j}
      \psi_j(\lambda) = \Phi_j(\lambda)-\Phi_{j-1}(\lambda) \text{ for } j\ge 0,
      \end{align}
 so that $\supp{\psi_j(|\lambda|)}=[e^{j-1},e^{j+1}]$ and $0\le \psi_j\le 1$.\\
With $\psi_j$ as above, we define a frequency localization adapted to the operator via \eqref{spectral-function} \begin{align}\label{P-j-def}
        P_j u := \psi_j(B)u= \int \psi_j(\lambda) dE_B(\lambda)u 
      \end{align} for $j\in \N$. Key properties of the operator $P_j$ are summarized in the following result.
   \begin{prop}[Lemma 26 in \cite{AkhKor24}]  \label{lem:LP-operator}  Let $P_j$ be defined by \eqref{P-j-def}. Then for any $u\in L^2(\Omega)$ there holds
      \begin{align*}
        & u=\sum_{j=0}^\infty P_j u \hspace{15 pt} \norm{u}^2\approx \sum_{j=0}^\infty \norm{P_ju}^2;\\
        & \int P_j u P_{j'} v dx = 0 \text{ for } |j-j'|>1 \text{ and } v\in L^2
      \end{align*}
   Finally, for $f:[1,\infty)\to \mathbb{R}_+$ nondecreasing
\begin{align}\label{LP-oper}
  \sum_{j=0}^\infty |f\left(e^{j- 1}\right)|^2 \norm{P_j u}^2 \le \norm{f(B) u}^2 \le 2 \sum_{j=0}^\infty |f\left(e^{j+ 1}\right)|^2 \norm{P_j u}^2
\end{align}
and the norm is finite if and only if the right side of the equation is finite.
      \end{prop}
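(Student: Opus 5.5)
Since $\Phi$ is even and $B\ge 1$ by Lemma \ref{lem:lambda-one}, only $\lambda\ge1$ matters. The plan is to reduce everything to pointwise properties of the family $\{\psi_j\}$ and then transfer them to operators through the functional calculus \eqref{spectral-function}.

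\textbf{Resolution of identity.} First I would note the telescoping identity
\begin{align*}
\sum_{j=0}^N\psi_j(\lambda)=\Phi_N(\lambda)-\Phi_{-1}(\lambda).
\end{align*}
For $\lambda\ge1$ we have $\Phi_{-1}(\lambda)=\Phi(e\lambda)=0$, since $e\lambda\ge e$ lies outside the support. Also $\Phi_N(\lambda)\to1$ pointwise and boundedly as $N\to\infty$. By dominated convergence in $\int|\cdot|^2\,d\norm{E_B(\lambda)u}^2$, this gives $\sum_{j\le N}P_ju\to u$ in $L^2$.

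\textbf{Almost orthogonality.} The supports of $\psi_j$ and $\psi_{j'}$ are disjoint when $|j-j'|>1$. Hence $\psi_j\psi_{j'}\equiv0$, and the multiplicativity of the functional calculus gives $(P_ju,P_{j'}v)=(\psi_{j'}\psi_j(B)u,v)=0$.

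\textbf{Norm equivalence and \eqref{LP-oper}.} The norm equivalence is the special case $f\equiv1$ of \eqref{LP-oper}, so I would prove \eqref{LP-oper} directly. Pointwise we have $0\le\psi_j\le1$, with at most two $\psi_j$ nonzero at any $\lambda$, and $\sum_j\psi_j=1$. This yields
\begin{align*}
\tfrac12\le\tfrac12\Big(\sum_j\psi_j\Big)^2\le\sum_j\psi_j^2\le\sum_j\psi_j=1.
\end{align*}

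For the lower bound, on $\supp\psi_j$ we have $\lambda\ge e^{j-1}$. Monotonicity of $f$ gives $f(e^{j-1})^2\psi_j^2\le f(\lambda)^2\psi_j^2$. Summing in $j$ and using $\sum_j\psi_j^2\le1$ yields $\sum_j f(e^{j-1})^2\psi_j(\lambda)^2\le f(\lambda)^2$. Integrating against $d\norm{E_B u}^2$ gives the left inequality, since $\norm{P_ju}^2=\int\psi_j^2\,d\norm{E_Bu}^2$.

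For the upper bound, $\lambda\le e^{j+1}$ on $\supp\psi_j$, so
\begin{align*}
f(\lambda)^2\le 2\sum_j f(\lambda)^2\psi_j^2\le 2\sum_j f(e^{j+1})^2\psi_j^2.
\end{align*}
Integrating gives the right inequality.

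\textbf{Finiteness.} Monotone convergence applies to these nonnegative sums. Therefore $u\in\dom f(B)$ exactly when the right-hand series converges.

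\textbf{Main obstacle.} The only delicate point is the pointwise inequality $\sum_j\psi_j^2\ge\frac12$. It rests on the bounded overlap: at most two indices contribute at each $\lambda$, which comes from the support bound on $\Phi$ and its monotonicity. I expect this bounded-overlap check to be where care is needed.
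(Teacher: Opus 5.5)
\textbf{Verdict: the core argument is correct, but your Finiteness paragraph claims an ``if and only if'' that you do not prove and that is false for general nondecreasing $f$.}

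The paper does not reprove this proposition; it quotes \cite{AkhKor24}. Your argument is the standard one and is correct for the resolution of identity, the almost orthogonality, and both inequalities in \eqref{LP-oper}. You reduce everything to three pointwise facts and integrate them against $d\norm{E_B(\lambda)u}^2$:
$0\le\psi_j\le1$; $\sum_j\psi_j=1$ on $[1,\infty)$; and at most two $\psi_j$ are nonzero at each $\lambda$.

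Two small precisions:
\begin{itemize}
\item What makes $\psi_j\psi_{j'}\equiv0$ for $|j-j'|>1$, and gives the bounded overlap, is that $\psi_j$ vanishes off the \emph{open} interval $(e^{j-1},e^{j+1})$. This comes from $\Phi\equiv1$ on $[-1,1]$ and $\supp\Phi\subset[-e,e]$. The closed supports may share an endpoint. Monotonicity of $\Phi$ is what gives $\psi_j\ge0$.
\item For $j=0$ the left side of \eqref{LP-oper} involves $f(e^{-1})$, which lies outside the domain of $f$. Since the spectral measure lives on $[1,\infty)$, read that term as $f(1)$.
\end{itemize}

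\textbf{The gap.} Monotone convergence applied to your upper pointwise bound gives only one direction: if $\sum_j f(e^{j+1})^2\norm{P_ju}^2<\infty$, then $u\in\dom f(B)$. Nothing you proved gives the converse. Your lower bound controls the series with $f(e^{j-1})$, not the one with $f(e^{j+1})$.

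The converse actually fails in general. A counterexample:
\begin{itemize}
\item Take $L_2=-\Delta+1$ on $Q$. Its Dirichlet eigenvalues have gaps $O(\sqrt{\lambda})$, so for all large $j$ you can pick an eigenvalue $\mu_j\in(e^j,e^{j+1})$ close enough to $e^{j+1}$ that $\psi_{j+1}(\mu_j)=1-\Phi(\mu_je^{-j})\ge\tfrac12$.
\item Let $f(\lambda)=e^{\lambda}$ and $u=\sum_j a_j\phi_j$, where the $\phi_j$ are normalized eigenfunctions and $a_j^2=j^{-2}e^{-2e^{j+1}}$.
\item Then $\norm{f(B)u}^2=\sum_j a_j^2e^{2\mu_j}\le\sum_j j^{-2}<\infty$.
\item But the right side of \eqref{LP-oper} is at least $2\sum_j e^{2e^{j+2}}a_j^2\psi_{j+1}(\mu_j)^2\ge\tfrac12\sum_j j^{-2}e^{2(e^{j+2}-e^{j+1})}=\infty$.
\end{itemize}

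So the ``if and only if'' in the statement needs a doubling hypothesis such as $f(e^2\mu)\le Cf(\mu)$ for $\mu\ge1$. Under that hypothesis, $f(e^{j+1})\le Cf(e^{j-1})$ for $j\ge1$, and the $j=0$ term is always finite. The two series in \eqref{LP-oper} are then comparable, so the equivalence follows. This covers $f(\lambda)=\sqrt{\lambda}$ with $C=e$.

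Nothing downstream in the paper is affected. Lemma \ref{lem:proj} only uses the direction you proved, applied to $P_ju$, whose spectral measure is compactly supported. The main proof only uses the lower bound with $f(\lambda)=\sqrt{\lambda}$. You should still state the equivalence only under a doubling assumption rather than assert it in general.
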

\section{Optimality and some extensions} \label{sec:opt}
Our appendix roughly follows the table \ref{table} on p. \pageref{table} - sufficiency of elliptic examples, then parabolic ones with $p=\frac{1}{2}$ and $p=1$.\\
When we combine \cite[Theorem 1]{AkhKorRios} and Theorem \ref{main:thm}, when both are applicable, we obtain the following:
       \begin{coro}\label{AKR}
       Suppose $L=L_1+g(x)L_2$, $L_1$ is uniformly elliptic\footnote{Sufficiency of hypoellipticity in \cite{AkhKorRios} applies more generally to a symmetric operator $L_1$ satisfying \eqref{superlog} for $p=1$ and generalizes \cite{Mor87,Christ01}}, $L_2$ is symmetric degenerate elliptic operator, i.e. satisfies \eqref{L2} and the weight $g(x)$
   \begin{align*}
     \text{$g(0)=0$ and $g(x)>0$ for }x\neq 0 \text{\textbf{ or }} g(0)>0
   \end{align*}
   Then $L$ is hypoelliptic near $(0,0)$ if and only if $L_2$ satisfies \eqref{superlog} for $p=1$.
\end{coro}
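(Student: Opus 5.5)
The corollary has two directions. I will prove each one by invoking a result already available.

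\emph{Necessity.} Assume $L=L_1+g(x)L_2$ is hypoelliptic near $(0,0)$ and $L_1$ is uniformly elliptic. Then Theorem \ref{main:thm} applies directly and gives \eqref{superlog} with $p=1$ for $L_2$ near $y=0$. This direction uses no assumption on $g$ beyond $g\ge 0$ and independence from $y$. Behind Theorem \ref{main:thm} is the spectral profile of order $p=1$ built in Proposition \ref{prop:u-est}, fed into Theorem \ref{thm:main-general}. So I only need to check that the normalization conventions agree. The shift of Lemma \ref{lem:lambda-one} replaces $L_2$ by $L_2-c_\Omega+1$. This changes $\mathrm{Re}(L_2u,u)$ only by a multiple of $\norm{u}^2$, which is absorbed into $C_{\eps',K}\norm{u}^2$. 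Hence \eqref{superlog} for the shifted operator is equivalent to \eqref{superlog} for the original $L_2$.

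\emph{Sufficiency.} Assume $L_2$ satisfies \eqref{superlog} with $p=1$. I will verify the hypotheses of \cite[Theorem 1]{AkhKorRios}. As the footnote notes, that theorem treats symmetric $L_1$ satisfying \eqref{superlog} with $p=1$, and the weight $g$ falls under one of the two listed alternatives.

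\begin{enumerate}
\item The case $g(0)>0$ is the easier one. Near the origin $g$ is bounded below by a positive constant. Then $\mathrm{Re}(Lu,u)$ controls $\mathrm{Re}(L_1u,u)+c\,\mathrm{Re}(L_2u,u)$, modulo lower order terms coming from $g$ multiplying the divergence form of $L_2$. Together with ellipticity of $L_1$ in $x$ and the logarithmic estimate for $L_2$ in $y$, this yields a superlogarithmic estimate for $L$ in all variables. Morimoto's theorem \cite{Mor87} then gives hypoellipticity.
\item The case $g(0)=0$, $g>0$ for $x\neq 0$, is exactly the degenerate setting of \cite{AkhKorRios}, which generalizes \cite{Christ01}. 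Here I only need to confirm that a uniformly elliptic $L_1$ trivially satisfies \eqref{superlog} with $p=1$ in $x$. This holds because $\log\jap{\eta}\le \eps\jap{\eta}+C_\eps$.
\end{enumerate}

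\emph{Main obstacle.} The hardest step is the compatibility of hypotheses in the sufficiency direction. \cite{AkhKorRios} is stated for symmetric $L_1$, while \eqref{L1} allows first order terms $a_j\dxp{j}$. I will first try to restrict to the symmetric (divergence-form) uniformly elliptic case, which is what the footnote indicates. If more generality is needed, I will argue that lower order drift terms are dominated by $\eps\norm{\nabla_x u}^2+C_\eps\norm{u}^2$ in the energy estimate, so the subelliptic/superlogarithmic estimates used in \cite{AkhKorRios} persist.

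Combining the two directions gives the stated equivalence.
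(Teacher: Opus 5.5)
Your proposal is correct and follows the paper's own argument. Necessity is exactly Theorem \ref{main:thm}, sufficiency is \cite[Theorem 1]{AkhKorRios}, and your caveat about symmetry of $L_1$ corresponds to the paper's proviso ``when both are applicable''. Your direct route through \cite{Mor87} when $g(0)>0$, obtaining a superlogarithmic estimate for the full $L$ from ellipticity of $L_1$ in $x$ and \eqref{superlog} for $L_2$ in $y$, is a valid alternative there; since $g$ depends only on $x$, multiplying the divergence form of $L_2$ by $g$ produces no extra lower-order terms.
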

This implies that superlogarithmic estimate \eqref{superlog} for $p=1$ is optimal for hypoellipticity of operators of the form $L=L_1+g(x)L_2$ at least in the case of elliptic $L_1$ and empty or isolated vanishing set for $g(x)$. Because no restriction is made on the decay rate of $g(x)$ in Corollary \ref{AKR} the full operator $L$ may not satisfy \eqref{superlog} for any $p>0$. We can think of Corollary \ref{AKR} as generalizing the first two lines in table.

\subsection{Superlogarithmic estimates and stopping time condition}\label{ref:Fed}
As mentioned in the introduction, Fedi\u{\i} \cite{Fedii71} found sufficient conditions for hypoellipticity an operator $L_F=\dy^2 + a(y)\dx^2$ beyond the bracket condition. Here, we review the literature for pointwise and average conditions on the coefficient $a(y)$ that are equivalent to the superlogarithmic estimate \eqref{superlog} for $L_F$. We also discuss some of the generalizations to higher dimensions. This allows us to examine implications for the coefficients of concrete operators for which our criterion and its applications are relevant.\\

 Fedii operator $L_F$ was shown hypoelliptic (we focus at the origin), whenever the smooth function $a(y)$ satisfies the following stopping time condition
\begin{align}\label{A-1}
  a_I=\int_I a(y) dy>0, \text{ for all intervals }I\subset [-\delta,\delta]
\end{align}
c.f. \cite[Theorem 0]{Morimoto-Morioka97-Fedii} and  \cite[Theorem 6.1]{Morimoto-Morioka97}. This condition is a generalization of monotonicity and symmetry of $a(y)$ that is often assumed
\begin{align}\label{a-mono}
  a(0)=0, \,\, a(-y)=a(y) \text{ and }a'(y)>0, \text{ for } y>0
\end{align}
In particular, condition \eqref{A-1} allows for both infinite rates of degeneracy like $a(y)=e^{-|y|^{-\alpha}}$ for any $\alpha>0$, but also varnishing of $a(y)$ to be along nowhere dense, but positive Lebesgue measure sets.\footnote{E.g. vanishing on a fat-Cantor or Smith–Volterra–Cantor set} 

 Kusuoka and Strook examined the following the following three dimensional operator
 \begin{align}\label{L3}
   L_{KS}=-\dx^2+L_F =-\dx^2-\dy^2 - a(y)\partial_z^2
 \end{align}
  that adds $L_1=-\dx^2$ to Fedii operator $L_{F}$. Under monotonicity assumption \eqref{a-mono}
and smooth square root of $a(y)$ they showed that the condition
 \begin{align}\label{Fedii-rate}
   \lim_{y\to 0} |y|^{\frac{1}{p}}\log a(y) = 0
 \end{align}
is necessary and sufficient for hypoellipticity of $L_{KS}$. Their argument was based on probabilistic techniques and  condition \eqref{Fedii-rate}. In that language \eqref{Fedii-rate} can be interpreted as a quantitative stopping time rate for the degenerate brownian motion. 
Superlogarithmic estimate \eqref{superlog} was formulated soon after \cite{Kusuoka-Strook85} by \cite{Hos87,Mor87} to understand the hypoellipticity of this operator using PDE techniques. For the monotone case, \eqref{Fedii-rate} was shown equivalent to superlogarithmic estimate \eqref{superlog} for $L_F$ and $L_{KS}$ for $p=1$. In the case of non-monotone function $a(y)$ the most general stopping time condition that is necessary and sufficient for hypoellipticity of $L_{KS}$ is the $M_1$ condition of Morimoto-Morioka, which we state for a general index $p>0$.
 \begin{defn}
 We say that a function $a(y)$ satisfies condition $M_p$ provided it satisfies \eqref{A-1} and
    \begin{align}\label{Mp}
  \inf_{0<\delta\le \delta_0} \left( \sup \left\{ |I|^\frac{1}{p} |\log a_{3I}| \, : \,  a_{3I} < \delta \right\} \right) = 0
\end{align}
where $3I$ denotes the interval with the same center as $I$ but with length
three times that of $I$.
 \end{defn}
 One can think of this condition as the quantitative version of a vanishing rate for a non-monotone function $a(y)$ in \eqref{Fedii-rate}, just like the non-negative average condition $a_I>0$ is the average version of pointwise vanishing $a(0)=0$ and $a(y)>0$ for $y\neq 0$.  \cite{Morimoto-Morioka97} shows that $M_1$ condition is equivalent to \eqref{Fedii-rate} under the ``monotonicity'' condition \eqref{A-1}, but allows more oscillation in general. We state the \cite[Prop 5.1]{Mor-Xu03} here:


\begin{prop}[\cite{Mor-Xu03} ]\label{Fedii-log}
  Suppose $a_I>0$ for all intervals near $0$. Then $M_p$ condition is equivalent to superlogarithmic estimate \eqref{superlog} for $L_F$ for the same $p$.
\end{prop}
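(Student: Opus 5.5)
The plan is to take the partial Fourier transform in $z$, with dual variable $\zeta$, and reduce \eqref{superlog} for $L_F$ to a family of one–dimensional Schrödinger-type estimates. For $u\in C^\infty_0(K)$, write $\hat u(y,\zeta)$. Then
\begin{align*}
\mathrm{Re}(L_Fu,u)=\int \left(\norm{\dy \hat u(\cdot,\zeta)}^2+\zeta^2\int a(y)|\hat u(y,\zeta)|^2dy\right)d\zeta .
\end{align*}
The part of $\log^{2p}\jap{(\eta,\zeta)}$ coming from the $y$-frequency $\eta$ is harmless, since $\log^{2p}\jap{\eta}\le \eps'\eta^2+C_{\eps'}$. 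So \eqref{superlog} for $L_F$ is equivalent to the following: for every $\eps'>0$ there is $C_{\eps'}$ such that for all $v\in C^\infty_0([-\delta,\delta])$ and all $|\zeta|\ge e$,
\begin{align*}
\log^{2p}|\zeta|\,\norm{v}^2\le \eps'\left(\norm{v'}^2+\zeta^2\int a|v|^2\right)+C_{\eps'}\norm{v}^2 .
\end{align*}
In words, the bottom of the Dirichlet spectrum of $-\dy^2+\zeta^2a(y)$ on $[-\delta,\delta]$ grows faster than $M\log^{2p}|\zeta|$ for every $M$. I will prove both directions of the equivalence with $M_p$ in this reduced form.

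For $M_p\Rightarrow$ estimate, I fix $\zeta$ and partition $[-\delta,\delta]$ into intervals $I$ of length $h=\theta\log^{-p}|\zeta|$, where $\theta$ is small and chosen in terms of $\eps'$. On each $3I$ I use the weighted Poincaré inequality
\begin{align*}
\int_{3I}|v|^2\le C\left(|I|^2\int_{3I}|v'|^2+\frac{|I|}{a_{3I}}\int_{3I}a|v|^2\right),
\end{align*}
which follows by comparing $v$ with its $a$-weighted mean (here $a_{3I}>0$ is guaranteed by \eqref{A-1}). Summing over the bounded-overlap cover, I get $\norm{v}^2\le C\bigl(h^2\norm{v'}^2+\max_I\frac{h}{\zeta^2a_{3I}}\,\zeta^2\!\int a|v|^2\bigr)$. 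The first coefficient gives $\log^{2p}|\zeta|\,h^2=\theta^2$, which is small. For the second coefficient there are two cases. If $a_{3I}\ge\delta_1$, it is trivially small for large $|\zeta|$, and small $|\zeta|$ goes into $C_{\eps'}$. If $a_{3I}<\delta_1$, then $M_p$ with $\delta_1$ chosen from \eqref{Mp} gives $|\log a_{3I}|\le \eta\, h^{-1/p}=\eta\theta^{-1/p}\log|\zeta|$. Choosing $\eta\theta^{-1/p}\le 1$ yields $\zeta^2a_{3I}\ge|\zeta|$, so $\log^{2p}|\zeta|\,h/(\zeta^2a_{3I})\to 0$. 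Multiplying through by $\log^{2p}|\zeta|$ then gives the reduced estimate.

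For the converse, I suppose $M_p$ fails. Then there are $c>0$ and intervals $I_k$ with $a_{3I_k}\to0$ and $|I_k|^{1/p}|\log a_{3I_k}|\ge c$. Take $v_k$ to be a bump adapted to $I_k$ with $\norm{v_k'}^2\lesssim|I_k|^{-2}\norm{v_k}^2$ and $\int a|v_k|^2\le a_{3I}\sup|v_k|^2\lesssim (a_{3I_k}/|I_k|)\norm{v_k}^2$, and choose $\zeta_k^2=|I_k|^{-1}a_{3I_k}^{-1}$. The right side of the reduced estimate is then $\lesssim \eps'|I_k|^{-2}\norm{v_k}^2+C\norm{v_k}^2$. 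The left side is $\gtrsim |\log a_{3I_k}|^{2p}\norm{v_k}^2\ge c^{2p}|I_k|^{-2}\norm{v_k}^2$. Letting $\eps'<c^{2p}/C$ and $k\to\infty$ (so $|I_k|\to0$) gives a contradiction.

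The step I expect to be the main obstacle is the weighted Poincaré inequality with the correct scaling, in particular controlling $v$ only through the average $a_{3I}$ rather than $\inf a$. This is exactly why $M_p$ is phrased with $3I$ and averages. I will prove it by writing $v(y)-v(y')=\int_{y'}^y v'$, multiplying by $a(y')$, integrating over $y'\in 3I$, and applying Cauchy–Schwarz. I also expect the bookkeeping of the threshold $\delta$ in \eqref{Mp} against $\theta$ to need care.
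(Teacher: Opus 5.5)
The paper gives no proof of this proposition. It quotes the result from \cite[Prop.~5.1]{Mor-Xu03}, which builds on \cite{Morimoto-Morioka97}, so there is no internal argument to compare against. Your proposal is a correct, self-contained proof.

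The following steps all check out:
\begin{enumerate}
\item The reduction, via the partial Fourier transform in $z$, to the fiberwise bound $\log^{2p}|\zeta|\,\norm{v}^2\le\eps'(\norm{v'}^2+\zeta^2\int a|v|^2)+C_{\eps'}\norm{v}^2$.
\item The weighted Poincar\'e inequality on $3I$. It is a two-line consequence of $|v(y)-\bar v_a|\le\int_{3I}|v'|$ and $|\bar v_a|^2\le a_{3I}^{-1}\int_{3I}a|v|^2$, where $\bar v_a$ is the $a$-weighted mean. So the step you flagged as the main obstacle is actually the easiest.
\item The choice $h=\theta\log^{-p}|\zeta|$ with $\eta=\theta^{1/p}$. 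With this choice, $a_{3I}<\delta_1$ forces $\zeta^2a_{3I}\ge|\zeta|$.
\item The bump test with $\zeta_k^2=(|I_k|a_{3I_k})^{-1}$.
\end{enumerate}
Your scaling $|I|/a_{3I}$ is correct precisely because the paper defines $a_I$ in \eqref{A-1} as the integral $\int_I a$ rather than the mean. Compared with simply importing the result, your argument makes explicit why $M_p$ is phrased through integrals over $3I$ and why the critical length scale is $\log^{-p}|\zeta|$.

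A few steps need one more line when you write this up.

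First, only one direction of your claimed equivalence between \eqref{superlog} and the fiberwise estimate is immediate. Fiberwise implies \eqref{superlog} by integrating in $\zeta$ and using $\log\jap{(\eta,\zeta)}\le\log\jap{\eta}+\log\jap{\zeta}$. Your converse half needs the other direction, which requires localizing in $\zeta$. For example, insert $u=v(y)\chi(z)e^{i\zeta_0z}$ with a fixed bump $\chi$. Then $\hat u(y,\zeta)=v(y)\hat\chi(\zeta-\zeta_0)$, and the cross term $\norm{\chi'}^2\int a|v|^2$ is absorbed into $C_{\eps'}\norm{v}^2$. Alternatively, plug $v_k(y)\chi(z)e^{i\zeta_kz}$ directly into \eqref{superlog}.

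Second, $|I_k|\to0$ is not automatic, and your contradiction needs it. It follows from \eqref{A-1} together with continuity of $I\mapsto\int_{3I}a$. If a subsequence had $|I_k|\ge\ell>0$, it would converge to an interval with positive integral, contradicting $a_{3I_k}\to0$.

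Third, the covering intervals in the first half and the intervals $3I_k$ in the second must lie where both $M_p$ and the estimate are posed. This just means taking the neighborhood in \eqref{Mp} inside the interior of $K$.
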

  Moreover, $M_1$ is necessary and sufficient for hypoellipticity of $L_{KS}$ \cite[Theorem 1]{Morimoto-Morioka97}. Meanwhile, a special case of \cite[Proposition 1]{Morimoto-Morioka97-Fedii} shows $M_{\frac{1}{2}}$ is necessary and sufficient to hypoellipticity of $L=\dt + L_F$. We use these properties to extend the latter result in \ref{ref:parab}.\\

An application of Theorem 
 \cite[Prop 5.2]{Mor-Xu03} extends the quantitative stopping time condition $M_p$ in \eqref{Mp} to arbitrary dimension for sums of squares operators $\laplace_X = \sum_{j=1}^m X_j^* X_j$ and shows sufficiency of such conditions for \eqref{superlog}. Commutators do play a role in restricting the vanishing rate of the coefficients to satisfy superlogarithmic estimate. For example, the operator $L_4 = \partial_{y_1}^2 + y_1^2 \partial_{y_2}^2 +a(y_2)\dz^2$ from \cite[Proposition 4]{Mor87} with a monotone $a(y_2)$ with a smooth square root, satisfies the superlogarithmic estimate with $p=1$ if and only if $a(y_2)$ satisfies $M_{2}$ condition or equivalently \eqref{Fedii-rate} for $p=2$. For $a(y)=e^{-|y|^{-\alpha}}$ it means that $\alpha<\frac{1}{2}$, as opposed to  $\alpha<1$ for Fedii and K-S operators $L_F$ and $L_{KS}$ respectively. In light of this discussion, superlogarithmic estimate \eqref{superlog} can be thought of as a quantitative rate of control on the degeneracy of the ellipticity of the operator $L$ beyond the bracket condition.\footnote{Or quantifying stopping time rate of controllability in \cite{Amano-control}}

\subsection{Parabolic results}\label{ref:parab}
As discussed in the introduction superlogarithmic estimate with $p=1$ is stronger than the one with $p=\frac{1}{2}$. We begin with the case of $p=\frac{1}{2}$, where Theorem \ref{parab1} settles the sharpness for a wide class of parabolic perturbations of Fedi\u{\i}'s operator $L_F$ in $1+2$ dimensions. More precisely, \cite[Proposition 3]{Morimoto-Morioka97-Fedii} gives the following sufficient condition for hypoellipticity:
\begin{prop}[\cite{Morimoto-Morioka97-Fedii}]
  Consider $L=\dt - g(t) (\dy^2 + a(y)\partial_z^2)$ with $g(t)$ as well as $a(t)$ satisfying \eqref{A-1}, i.e. $g_I>0$ and $a_I>0$ on every interval $I$. Suppose further that $a(y)$ satisfies $M_p$ condition \eqref{Mp} for $p=\frac{1}{2}$. Then $L$ is hypoelliptic.
\end{prop}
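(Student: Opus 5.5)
The plan is to reduce the statement to a superlogarithmic a priori estimate for the full operator and then run Morimoto's regularity machinery \cite{Mor87} in its parabolic form. First, since $a_I>0$ on every interval and $a$ satisfies $M_{\frac12}$, Proposition \ref{Fedii-log} gives the superlogarithmic estimate \eqref{superlog} of order $p=\tfrac12$ for the Fedi\u{\i} part $L_F=-\dy^2-a(y)\dz^2$. In other words, for every $\eps'>0$,
\begin{align*}
\norm{(\log\jap{\eta})^{\frac12}\hat u(\eta)}^2\le \eps'(L_Fu,u)+C_{\eps'}\norm{u}^2,
\end{align*}
where $\eta$ is the dual variable to $(y,z)$. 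Next we weight this estimate by $g(t)$ and integrate in $t$. Pairing $Lu$ with $u$, the term $(\dt u,u)$ vanishes in the real part, so $\mathrm{Re}(Lu,u)=\int g(t)(L_Fu(t),u(t))\,dt$. This controls $\int g(t)\norm{\log^{\frac12}\jap{D_{y,z}}u(t)}^2dt$.

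The weight $g(t)$ may vanish, and removing it is the step I expect to be the main obstacle. Here the stopping time condition $g_I>0$ and the transport term $\dt$ must be combined. I would first try to prove the estimate
\begin{align*}
\norm{(\log\jap{D_{y,z}})^{\frac12}u}^2\le \eps'\,\mathrm{Re}(Lu,u)+C\norm{Lu}\,\norm{u}+C_{\eps'}\norm{u}^2
\end{align*}
on small boxes. The idea is to write $u(t)=u(s)+\int_s^t\dt u$ and average over $s$ in an interval $I\ni t$ of length $|I|\approx (\log\jap{\eta})^{-1}$ on which $g_I$ is not too small. The $\dt u$ contribution is expressed through $Lu-g L_Fu$. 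Balancing the length of $I$ against the logarithmic weight costs exactly a square root of the logarithm. This is why only the order $p=\tfrac12$, and hence $M_{\frac12}$ rather than $M_1$, is needed, matching the dichotomy in Table \ref{table}. The mechanism is the same as in the computation of Proposition \ref{1d-parab}, where the time direction produces a factor $e^{c\lambda T}$ instead of $e^{c\sqrt\lambda\, r}$.

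With this a priori estimate in hand, the last step is the standard Morimoto bootstrap \cite{Mor87}. Let $\chi$ be a cutoff and let $\Lambda^s=\jap{D}^s$ be applied to the solution, regularized by a Friedrichs mollifier. The commutators $[L,\chi\Lambda^s]$ are of order $s$ times derivatives that are already controlled. More precisely, they are bounded by $\eps'\,\mathrm{Re}(L\cdot,\cdot)$ plus lower order terms, once one inserts the loss $\log^{\frac12}$ through the logarithmic gain and shrinks the supports of successive cutoffs. Because the gain is only logarithmic, each step in the induction must move by a logarithmic increment $\jap{D}^{s}\log^{\frac12}\jap{D}$ rather than by a fractional power. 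This is handled exactly as in \cite{Mor87,Morimoto-Morioka97-Fedii}, with the nested neighborhoods absorbing the growth of the constants. The $t$-regularity then follows from the equation itself, via $\dt u=Lu+gL_Fu$, once regularity in $(y,z)$ is established, which gives $u\in C^\infty$ near the origin.
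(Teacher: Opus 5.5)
The paper does not prove this proposition; it quotes it from \cite{Morimoto-Morioka97-Fedii}. Your argument therefore has to stand on its own, and it fails at the step you flagged yourself. The unweighted estimate
\[
\norm{(\log\jap{D_{y,z}})^{\frac12}u}^2\le \eps'\,\mathrm{Re}(Lu,u)+C\norm{Lu}\,\norm{u}+C_{\eps'}\norm{u}^2
\]
is false under the hypotheses of the proposition. Take $a\equiv 1$, for which $a_I>0$ and $M_{\frac12}$ hold trivially, and $g(t)=e^{-1/t^2}$, so $L=\dt-g(t)\Delta_{y,z}$. Test with $u=\chi(t/\delta)\,\theta(y,z)\,e^{iNz}$, where $\chi\in C^\infty_0(-1,1)$ is real, $\theta\in C^\infty_0$ has $\norm{\theta}=1$, and $\delta=(2\log N)^{-1/2}$, so that $g\le N^{-2}$ on the support of $u$. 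Then $\norm{u}^2\approx\delta$, $\mathrm{Re}(Lu,u)\les \delta N^{-2}N^2=\delta$ and $\norm{Lu}\norm{u}\les(\delta^{-1/2}+\delta^{1/2})\delta^{1/2}\les 1$. So the right side stays bounded, while the left side is $\approx\delta\log N\approx(\log N)^{1/2}\to\infty$. For faster vanishing $g$, such as $e^{-e^{1/|t|}}$, even adding $C\norm{Lu}^2$ to the right side would not rescue it.

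The obstruction is structural. Near a zero of $g$ the operator is essentially $\dt$. An estimate required to hold for all $u\in C^\infty_0$ of a fixed neighborhood must hold for $u$ concentrated in arbitrarily thin time slabs. On a slab of width $\delta$, $\dt$ buys at most $\delta^{-1}$, while $gL_F$ is negligible there. The qualitative hypothesis $g_I>0$ gives no lower bound for $g_I$ on intervals of length $(\log\jap{\eta})^{-1}$; in the example it is smaller than every power of $\jap{\eta}^{-1}$. So your balancing has nothing to balance against, and Step 4 has no a priori estimate to start from.

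The correct mechanism is forward propagation over a time interval of \emph{fixed} length. For $t$ near $0$ one has $\int_{-T}^{t}g\ge c_T>0$, and this is where $g_I>0$ is really used. Hence an $L_F$-spectral component at level $\lambda$ is damped by $e^{-c_T\lambda}$. The order-$\frac12$ superlogarithmic estimate for $L_F$, which you get from $M_{\frac12}$ via Proposition \ref{Fedii-log}, says roughly that $\log\jap{\eta}\le\eps'\lambda+C_{\eps'}$ for every $\eps'>0$. So this damping beats every power of $\jap{\eta}$. This is the sufficiency counterpart of the factor $\exp(-\lambda\int_{-T}^t g)$ in Proposition \ref{1d-parab}, where $T$ is fixed and does not shrink with the frequency.

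A proof along these lines has to be nonlocal in $t$: for instance a Duhamel representation or one-sided energy estimates in $t$, combined with localization in $(y,z)$ for the spectral calculus of $L_F$. It cannot be a local a priori estimate fed into the bootstrap of \cite{Mor87}, which is in any case set up for symmetric operators with $p=1$. A minor point: with the paper's convention $L=\dt+gL_F$, one has $\dt u=Lu-gL_Fu$, not $Lu+gL_Fu$.
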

This result extends \cite{Hoshiro-89} which examined $g(t)=1$ with monotone $a(y)$ satisfying \eqref{a-mono}. Proposition \ref{Fedii-log} shows that $M_{\frac{1}{2}}$ condition is equivalent to the superlogarithmic estimate \eqref{superlog} for $p=\frac{1}{2}$ for $L_F$. In other words, $L_F$ satisfying \eqref{superlog} is sufficient for hypoellipticity of $L=\dt + g(t) L_F$ from \eqref{Fedii2D}. Unlike many results in \cite{Morimoto-Morioka97-Fedii} this result did not have a matching necessary condition in the paper. Theorem \ref{parab1} implies that superlogarithmic estimate for $p=\frac{1}{2}$ for $L_F$ is necessary for hypoellipticity of $L=\dt + g(t) L_F$. We arrive at:
\begin{coro}
  Consider the operator $L=\dt + g(t) L_F$ with $g(t)$ (and $a(y)$) satisfying \eqref{A-1}. The following are equivalent
  \begin{itemize}
    \item L is hypoelliptic
    \item $L_F$ satisfies \eqref{superlog} for $p=\frac{1}{2}$
    \item The $M_p$ condition \eqref{Mp} holds for $a(y)$ for $p=\frac{1}{2}$
  \end{itemize}
\end{coro}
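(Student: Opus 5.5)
The plan is to establish the cycle of implications in the Corollary by combining three results that are already available: the sufficiency result of \cite[Proposition 3]{Morimoto-Morioka97-Fedii} stated just above, the equivalence in Proposition \ref{Fedii-log}, and the necessity result Theorem \ref{parab1}.

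\textbf{(ii)$\Leftrightarrow$(iii).} Since $a(y)$ satisfies \eqref{A-1}, we have $a_I>0$ for every interval near $0$. Proposition \ref{Fedii-log} applied with $p=\frac12$ then says directly that $L_F$ satisfies \eqref{superlog} of order $\frac12$ exactly when $a$ satisfies $M_{\frac12}$. Nothing further is needed for this equivalence.

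\textbf{(iii)$\Rightarrow$(i).} Here I would invoke the proposition of \cite{Morimoto-Morioka97-Fedii} quoted above. It requires $g_I>0$ and $a_I>0$ on every interval, which is exactly our standing assumption \eqref{A-1}, together with $M_{\frac12}$ for $a$. Its conclusion is that $\dt - g(t)(\dy^2+a(y)\dz^2)$ is hypoelliptic. With the sign convention \eqref{Fedii2D}, this operator is precisely $\dt + g(t)L_F$.

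\textbf{(i)$\Rightarrow$(ii).} This is the step requiring the paper's own work, and I will derive it from Theorem \ref{parab1}. Take $n=1$, $x=t$, $a_0\equiv 0$, and $L_2=L_F$ acting in $y'=(y,z)\in\R^2$. To apply Theorem \ref{parab1} I need:
\begin{itemize}
\item $L_2$ to have the symmetric form \eqref{L2}. It does, with $b_{11}=1$, $b_{22}=a(y)$, $b_{12}=b_{21}=0$ and $b_0=0$. The divergence form is compatible because $a$ depends only on $y$ and not on $z$, so $\dz(a\dz u)=a\dz^2u$.
\item $b$ to be nonnegative, which follows from $a\ge 0$. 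This in turn is a consequence of $a_I>0$ on all intervals together with continuity of $a$.
\item $g\ge 0$ in order to fit the framework \eqref{Lgeneral}, which likewise follows from $g_I>0$ and continuity.
\end{itemize}
Theorem \ref{parab1} rests on Proposition \ref{1d-parab} and Theorem \ref{thm:main-general}, neither of which uses anything about $g$ beyond boundedness. Hence hypoellipticity of $L$ near the origin forces \eqref{superlog} with $p=\frac12$ for $L_F$ near $y'=0$.

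The main point to check is that the localizations match. The estimate produced by Theorem \ref{parab1} holds on compact sets in a small neighborhood of $0$, and this is the same sense of localization used in Definition \ref{def-superlog} and in Proposition \ref{Fedii-log}. So no further argument is needed to close the cycle.
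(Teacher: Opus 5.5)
Your proposal is correct and follows the same route as the paper. The paper closes the cycle the same way: (ii)$\Leftrightarrow$(iii) from Proposition \ref{Fedii-log}, (iii)$\Rightarrow$(i) from \cite[Proposition 3]{Morimoto-Morioka97-Fedii}, and (i)$\Rightarrow$(ii) from Theorem \ref{parab1}, and your extra checks (the divergence form of $L_F$, and $a\ge 0$, $g\ge 0$ from \eqref{A-1} plus continuity) are exactly what that reduction implicitly uses.
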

We next consider operators of the form
\begin{align}\label{Hosh:eq}
 L_H:=\dt + L_1(x,D_x) + g(x)L_2(y,D_y)
\end{align}
for uniformly parabolic $\dt+L_1$, and where we denote $\tld L_2=L_1 + g(x)L_2$ so that $L_H=\dt + \tld L_2$. In this setting both Theorems \ref{parab1} and \ref{parab:n} apply and imply necessity of \eqref{superlog} - with $p=1$ for $L_2$ and $p=\frac{1}{2}$ for $\tld L_2$. For simplicity, we consider a non-degenerate $g(x)$, i.e. $g(0)\neq 0$. In that setting a special case of \cite[Theorem 1]{Hosh-88}\footnote{\cite[Theorem 1]{Hosh-88} allows a general $\tld L_2$ that satisfies \eqref{superlog} for $p=1$}
states that a superlogarithmic estimate \eqref{superlog} for $L_2$ with $p=1$ is sufficient for hypoellipticity of $L_H$. Note, that \cite[Theorem 1]{Hosh-88} hypothesis requires $C^1_t \D'$ solutions rather than distributions in space time to get smooth solutions, which is slightly different than the standard Definition \ref{def-hypo}. Up to this technical point, we see that for parabolic operators of the form \eqref{Hosh:eq} superlogarithmic estimate for $p=1$ is sharp for hypoellipticity:
\begin{coro}\label{Hosh:opt}
  Suppose the function $g(x)$ is non-degenerate and operator $L_1$ is uniformly elliptic. The the operator $L_H$ is hypoelliptic for $C^1_t \D'$ solutions if and only if $L_2$ satisfies \eqref{superlog} for $p=1$.
\end{coro}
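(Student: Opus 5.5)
The corollary is an equivalence, so I would prove the two implications separately and then combine them.

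\emph{Necessity.} Assume $L_H$ is hypoelliptic for $C^1_t\D'$ solutions. The plan is to apply Theorem \ref{parab:n} to $L_H$. Write the space variables as $(x,t)$ with $L_1^{par}:=\dt+L_1(x,D_x)$. Since $L_1$ is uniformly elliptic, $L_1^{par}$ has exactly the form \eqref{parab-eq}. The weight $g$ depends only on $x$, so it is independent of the $y$ variables, as the criterion requires. Theorem \ref{parab:n} then yields \eqref{superlog} with $p=1$ for $L_2$.

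The main obstacle is that Theorem \ref{parab:n} assumes hypoellipticity in the sense of Definition \ref{def-hypo}, whereas here it is only assumed for $C^1_t\D'$ solutions. I would check where hypoellipticity enters the proof of Theorem \ref{thm:main-general}. It enters only through Lemma \ref{closed-graph}, applied to the functions $w_j=v(x,t,B)P_ju$ built in Proposition \ref{prop:PDE:full}. I would restrict the space $X$ of Lemma \ref{lem:X} to solutions that are $C^1$ in $t$ with values in $\D'$, and verify that $w_j$ lies in this space:
\begin{itemize}
\item $P_ju$ is spectrally localized to $\lambda\le e^{j+1}$;
\item $v$ comes from the classical solution of Proposition \ref{parab:close}, which is $C^1$ in $t$, with $\dt v$ bounded on compact $\lambda$-sets.
\end{itemize}
Closedness of this restricted space requires some care. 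I would handle it by equipping $X$ with the graph norm $\norm{w}_{L^2}+\norm{\dt w}_{L^2_t H^{-N}}$. For $w_j$ this extra norm is controlled by $e^{\eps e^{j/2p}}\norm{P_ju}$, because $\dt w_j=-(L_1+g\lambda)w_j$ and $\lambda\le e^{j+1}$ is absorbed by the exponential. With that bound, the estimate \eqref{Hs-tldL} survives with constants independent of $j$, and the rest of Section \ref{sec:high} goes through unchanged.

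\emph{Sufficiency.} This direction is quoted from \cite[Theorem 1]{Hosh-88}. That theorem applies to $\dt+\tld L_2$ with $\tld L_2=L_1+g(x)L_2$, provided $\tld L_2$ satisfies \eqref{superlog} with $p=1$. So I must derive the estimate for $\tld L_2$ from the one for $L_2$. Since $g(0)\neq0$, after shrinking the neighborhood I may assume $g\ge g_0>0$. Taking Fourier transform in $(x,y)$, I split
\[
\log\jap{(\eta,\xi)}\les \log\jap{\eta}+\log\jap{\xi}.
\]
The $\log\jap{\eta}$ part is absorbed by uniform ellipticity of $L_1$, which gives $\norm{u}_{H^1_x}^2\les \mathrm{Re}(L_1u,u)+C\norm{u}^2$. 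For the $\log\jap{\xi}$ part, I apply \eqref{superlog} for $L_2$ at each fixed $x$ and integrate in $x$, bounding
\[
\norm{\log\jap{\xi}\hat u}^2\le \eps' g_0^{-1}\,\mathrm{Re}(gL_2u,u)+C\norm{u}^2 .
\]
The two pieces combine to give \eqref{superlog} with $p=1$ for $\tld L_2$. Hoshiro's theorem then gives hypoellipticity of $L_H$ for $C^1_t\D'$ solutions, which completes the equivalence.
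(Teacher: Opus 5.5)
Your proposal is correct and follows the paper's route. Necessity comes from Theorem \ref{parab:n} via Theorem \ref{thm:main-general}, once one checks that hypoellipticity is only invoked, through Lemma \ref{closed-graph}, on solutions that are $C^1$ in $t$. Sufficiency comes from \cite[Theorem 1]{Hosh-88}, and your derivation of \eqref{superlog} for $\tld L_2=L_1+g(x)L_2$ from the estimate for $L_2$ spells out what the paper leaves to a footnote; it uses $g\ge g_0>0$, ellipticity in $x$, and, tacitly, the normalization $L_2\ge 1$ of Lemma \ref{lem:lambda-one}.

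The one soft spot is your graph norm $\norm{w}_{L^2}+\norm{\dt w}_{L^2_tH^{-N}}$: by itself it would not keep limits inside $C^1_t\D'$. However, no restriction of $X$ is needed. Any $L^2$ solution of $L_Hw=0$ has $\dt w=-(L_1+gL_2)w\in L^2_tH^{-2}_{\mathrm{loc}}$, hence $w\in C^0_tH^{-2}_{\mathrm{loc}}$ and then $\dt w\in C^0_tH^{-4}_{\mathrm{loc}}$. So hypoellipticity for $C^1_t\D'$ solutions already covers every element of $X$, and Lemma \ref{closed-graph} applies verbatim.
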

We resolve the technical point here.
\begin{proof}[Proof of Corollary \ref{Hosh:opt}]
   Reduction of Theorem \ref{parab:n} to the Logarithmic Criterion Theorem \ref{thm:main-general} requires no change, since we constructed classical spectral solutions of \eqref{parab-eq} in $t,x$. Our proof of the Theorem \ref{thm:main-general} in sections \ref{sec:gen:low} and \ref{sec:high} does not rely on a specific derivative being a priori bounded or unbounded. Hence it applies for solution that is a priori $u \in C^1_tC^2_xL^2_y$ for spectral theory in $y$ to apply by \eqref{P-j-def}, whereas the proof of Theorem \ref{main:thm} considered $u\in C^0_{t,x}L^2_y$. \qedhere
 \end{proof}

\section{Lower regularity with Sobolev embedding}
\label{app:Sob}
A version of Proposition \ref{prop:Sob:part}, was proved in one dimension in \cite[Lemma 18]{AkhKor24}. We restate the earlier result that we need for the general case here.
\begin{prop}\label{Sobolev-1D}[Sobolev Embedding 1D]
	Let
	\begin{align}\label{s=s_1+}
		\tld s_1>\frac{1}{2}\text{ and }s_2\ge 0
	\end{align}
	Suppose $u(x',z)\in H^{s_1+s_2}(\R_{x'}\times \R^k_z)$. Then $x'\mapsto u(x',\cdot)$ is a continuous uniformly bounded function from $\R\to H^{s_2}_y$ with
	\begin{align*}
		\norm{u}_{L^\infty_{x'}H^{s_2}_z}\le C_{s_1,s_2}\norm{u}_{H^{s_1+s_2}_{x',z}}
	\end{align*}
	In particular,
	\begin{align}\label{Sobolev-estimate}
		\norm{u(0,\cdot)}_{H^{s_2}_z}\le C_{s_1,s_2}\norm{u}_{H^{s_1+s_2}_{x',z}}
	\end{align}
\end{prop}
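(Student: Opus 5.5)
The statement is the one-dimensional Sobolev trace embedding with values in $H^{s_2}_z$. I will prove it directly on the Fourier side, so that the full-space Plancherel identity can be used throughout. Write $\hat u(\eta,\zeta)$ for the Fourier transform of $u$ in $(x',z)$, with $\eta\in\R$ dual to $x'$ and $\zeta\in\R^k$ dual to $z$. Let $\tilde u(x',\zeta)$ denote the partial Fourier transform in $z$ only. Then
\begin{align*}
\tilde u(x',\zeta)=\frac{1}{2\pi}\int_\R e^{ix'\eta}\hat u(\eta,\zeta)\,d\eta ,
\end{align*}
and $\norm{u(x',\cdot)}_{H^{s_2}_z}^2=\int \jap{\zeta}^{2s_2}|\tilde u(x',\zeta)|^2\,d\zeta$ by Plancherel in $z$.

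The key step is a pointwise Cauchy--Schwarz estimate in $\eta$ for fixed $\zeta$, with weight $\jap{(\eta,\zeta)}^{2\tld s_1}$, where $\tld s_1$ plays the role of $s_1$ in the norm:
\begin{align*}
|\tilde u(x',\zeta)|^2\le \frac{1}{4\pi^2}\left(\int_\R \jap{(\eta,\zeta)}^{-2\tld s_1}\,d\eta\right)\left(\int_\R \jap{(\eta,\zeta)}^{2\tld s_1}|\hat u(\eta,\zeta)|^2\,d\eta\right).
\end{align*}
For the first factor, substitute $\eta=\jap{\zeta}\tau$. Since $\tld s_1>\frac12$, this gives
\begin{align*}
\int_\R(\jap{\zeta}^2+\eta^2)^{-\tld s_1}\,d\eta\les C_{\tld s_1}\jap{\zeta}^{1-2\tld s_1}\le C_{\tld s_1}.
\end{align*}
Here we use $\jap{\zeta}\ge e$, together with $\jap{(\eta,\zeta)}^2\approx\jap{\zeta}^2+\eta^2$ up to harmless constants coming from the $e^2$ in \eqref{jap-bracket}. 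Multiplying by $\jap{\zeta}^{2s_2}$ and using $\jap{\zeta}^{2s_2}\le\jap{(\eta,\zeta)}^{2s_2}$ (valid because $s_2\ge0$), then integrating in $\zeta$, yields the bound uniformly in $x'$:
\begin{align*}
\norm{u(x',\cdot)}_{H^{s_2}_z}^2\le C\int\!\!\int \jap{(\eta,\zeta)}^{2(\tld s_1+s_2)}|\hat u|^2\,d\eta\, d\zeta= C\norm{u}_{H^{\tld s_1+s_2}}^2.
\end{align*}
Taking $x'=0$ gives \eqref{Sobolev-estimate}.

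Two technical points need attention, and I expect them to be the only delicate ones.

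\emph{Pointwise meaning of the formula.} The Fourier inversion formula for $\tilde u(x',\zeta)$ only makes sense pointwise if $\hat u(\cdot,\zeta)\in L^1_\eta$ for a.e.\ $\zeta$. The Cauchy--Schwarz bound above shows this holds. So I will first run the argument for $u$ in the Schwartz class, and then extend by density using the uniform estimate just proved.

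\emph{Continuity in $x'$.} For Schwartz functions, $x'\mapsto u(x',\cdot)\in H^{s_2}_z$ is continuous; the cleanest route is dominated convergence applied to the inversion formula. The uniform bound then shows that the map $u\mapsto(x'\mapsto u(x',\cdot))$ extends from Schwartz functions to a bounded map $H^{\tld s_1+s_2}\to C_b(\R;H^{s_2}_z)$. A uniform limit of continuous functions is continuous, so this gives both the continuity and the identification of the trace $u(0,\cdot)$ for general $u\in H^{\tld s_1+s_2}$.
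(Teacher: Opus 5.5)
Your proof is correct and complete. The steps that carry it are sound: the Cauchy--Schwarz estimate in $\eta$ with weight $\jap{(\eta,\zeta)}^{2\tld s_1}$, the rescaling $\eta=\jap{\zeta}\tau$, and $\jap{\zeta}^{2s_2}\le\jap{(\eta,\zeta)}^{2s_2}$ for $s_2\ge 0$. With the convention \eqref{jap-bracket}, the identity $\jap{(\eta,\zeta)}^2=\jap{\zeta}^2+\eta^2$ holds exactly, so you need no comparison constants. You also correctly read the $\tld s_1$ in the hypothesis as the $s_1$ in the norm.

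The paper gives no argument for this statement; it simply restates it from \cite[Lemma 18]{AkhKor24}. So there is no in-paper proof to compare with, and yours is the standard Fourier-side proof of this trace estimate.

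Two minor remarks:
\begin{enumerate}
\item You discard the factor $\jap{\zeta}^{1-2\tld s_1}$. Keeping it gives the sharper trace into $H^{s_1+s_2-\frac12}_z$, although that is not needed for the iteration in Proposition \ref{prop:Sob:part}.
\item The density step is optional. You already showed $\hat u(\cdot,\zeta)\in L^1_\eta$ for a.e.\ $\zeta$, so the inversion formula defines the continuous representative directly. Dominated convergence in $\eta$, then in $\zeta$ (with the dominating function supplied by your Cauchy--Schwarz bound), gives continuity into $H^{s_2}_z$. If you keep the density route, add one line identifying the limit: $u_n\to u$ in $L^2(\R^{1+k})$ while $x'\mapsto u_n(x',\cdot)$ converges uniformly in $L^2_z$, so the limiting map is a representative of $u$. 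This is what gives meaning to $u(0,\cdot)$.
\end{enumerate}
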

We now prove the $n$ dimensional version.
\begin{proof}[Proof of Proposition \ref{prop:Sob:part}]
	First, consider $u(x,y)\in H^{s_1+s_2}(\R^{n}_{x}\times \R^m_{y})$.
	Since $s_1>n/2$ we can partition $s_1=\sum_{j=1}^n s_{1,j}$ with each $s_{1,j}>\frac{1}{2}$. We then iterate \eqref{Sobolev-estimate} one coordinate of $x=(x_1,\ldots x_n)\in \R^n$ at a time. More precisely, for the first step we apply Proposition \ref{Sobolev-1D} with $x'=x_1\in \R$ and $z=(x_2,\ldots x_n,y)\in \R^{n+m-1}$ to obtain
	\[
	\norm{u(0,\cdot)}_{H^{s_{2}+s_1-s_{1,1}}(\R^{n+m-1})}\le C_{s_1,s_2,s_{1,1}}\norm{u}_{H^{s_1+s_2}_{x,y}(\R^n_x\times \R^m_y)}.
	\]
	At step two with $u(0,x',z)\in H^{s_{2}+s_1-s_{1,1}}(\R_{x'}\times\R^{n+m-2}_z)$ , $x'=x_2$, and ${z=(x_3,\dots, x_n,y)\in \R^{n+m-2}}$ we get combining with step one
	\begin{align*}
		\norm{u(0,0,\cdot)}_{H^{s_{2}+s_1-s_{1,1}-s_{1,2}}(\R^{n+m-2})}&
		\leq C_{s_1,s_2,s_{1,1},s_{1,2}}\norm{u(0,\cdot)}_{H^{s_{2}+s_1-s_{1,1}}(\R_{x'}\R^{n+m-1})}\\
		&\leq C_{s_1,s_2,s_{1,1},s_{1,2}}\norm{u}_{H^{s_1+s_2}_{x,y}(\R^n_x\times \R^m_y)}
	\end{align*}
	Repeating $n$ times, we obtain
	\begin{align}\label{Sobolev-nD}
		\norm{u(0,\cdot)}_{H^{s_2}(\R^m_y)}\le C_{s_1,s_2}\norm{u}_{H^{s_1+s_2}_{x,y}(\R^n_x\times \R^m_y)}.
	\end{align}	
	Now consider $w(x,y)\in H^{s_1+s_2}(\Omega)$. Let $\psi_x\in C^\infty_0(\R^n_x)$ and $\psi_y\in C^\infty_0(\R^m_y)$ with $\psi_x(x)\equiv 1$ on $Q_{r''}$ and $\psi_y(y)\equiv 1$ on $Q''$. Further, suppose $0\le \psi\le 1$. Finally, we need $$\supp\psi_x\times\supp \psi_y \subset \Omega'.$$
	Define $\psi(x,y):=\psi_x(x)\psi_y(y)$. We apply \eqref{Sobolev-nD} to $u=\psi w$ to get
	\begin{align*}
		\norm{\psi(0,\cdot) w(0,\cdot)}_{H^{s_2}_y(\R^m_y)}\le C_{s_1,s_2}\norm{\psi w}_{H^s_{x,y}(\R^n_x\times \R^m_y)}
	\end{align*}
	Since $\psi(0,y)=\psi_y(y)$ with $\psi_y\equiv 1$ on $Q''$ we obtain
	\begin{align*}
		\norm{w(0,\cdot)}_{H^{s_2}_y(Q'')}\le \norm{\psi(0,\cdot)w(0,\cdot)}_{H^{s_2}_y(\R^m_y)}.
	\end{align*}
	Finally, by definition we have
	\begin{align*}
		\norm{\psi w}_{H^s_{x,y}(\R^n_x\times \R^m_y)} \le C(\Omega,\Omega'',s)\norm{w}_{H^s(\Omega')}.
	\end{align*}
	Combining the last three estimates gives
	\begin{align*}
		\norm{w(0,\cdot)}_{H^{s_2}_y(Q'')}\le C\norm{w}_{H^s(\Omega')}.
	\end{align*}
\end{proof}

\bibliographystyle{amsalpha}
\bibliography{Hypo-paper-2025-temp}
\end{document}